\documentclass{amsart}
\usepackage{graphicx}
\usepackage{verbatim}
\usepackage{textcomp}
\usepackage{amssymb}
\usepackage{cite}
\vfuzz2pt 
\hfuzz2pt 
\newtheorem{thm}{Theorem}[section]
\newtheorem{cor}[thm]{Corollary}
\newtheorem{lem}[thm]{Lemma}
\newtheorem{prop}[thm]{Proposition}
\newtheorem{conj}[thm]{Conjecture}

\theoremstyle{definition}

\theoremstyle{remark}

\numberwithin{equation}{section}
\newcommand{\norm}[1]{\left\Vert#1\right\Vert}
\newcommand{\abs}[1]{\left\vert#1\right\vert}
\newcommand{\Real}{\mathbb R}
\newcommand{\Ker}{\mathcal G}
\newcommand{\eps}{\varepsilon}
\newcommand{\lam}{\lambda}
\newcommand{\To}{\longrightarrow}
\newcommand{\ra}{\rangle}
\newcommand{\la}{\langle}

\title[Uniqueness of Self-similar Shrinkers]{Uniqueness of Self-similar Shrinkers with Asymptotically Conical Ends}
\author{Lu Wang}%
\address{Department of Mathematics, Johns Hopkins University\\
3400 N. Charles Street, Baltimore, MD 21218\\
and Mathematical Sciences Research Institute\\
17 Gauss Way, Berkeley, CA 94720.}
\email{coral0426@gmail.com}

\begin{document}
\begin{abstract}
Let $C\subset\Real^{n+1}$ be a regular cone with vertex at the origin. In this paper, we show the uniqueness for smooth properly embedded self-shrinking ends in $\Real^{n+1}$ that are asymptotic to $C$. As an application, we prove that not every regular cone with vertex at the origin has a smooth complete properly embedded self-shrinker asymptotic to it.
\end{abstract}

\subjclass[2010]{Primary 53C44, 53C24, 35J15; Secondary 35B60}

\keywords{self-shrinkers, mean curvature flow, backward uniqueness}

\thanks{The author was supported by the postdoctoral fellowship at MSRI}   

\maketitle

\section{Introduction}

Self-shrinkers are a special class of solutions to the mean curvature flow in $\Real^{n+1}$, in which a later time slice is a scaled down copy of an earlier slice. More precisely, a hypersurface $\Sigma$ in $\Real^{n+1}$ is said to be a self-shrinker if it satisfies 
\begin{equation}
\label{SelfshrinkerEqn}
H=\frac{1}{2}\langle x,\textbf{n}\rangle.
\end{equation} 
Here $H=\text{div}\left(\textbf{n}\right)$ is the mean curvature,  $\textbf{n}$ is the outward unit normal, $x$ is the position vector and $\langle,\rangle$ denotes the Euclidean inner product. One reason that self-shrinking solutions to the mean curvature flow are particularly interesting is that they provide singularity models of the flow; see \cite{H1,H2}, \cite{Il1} and \cite{Wh}.

Throughout, $O$ is the origin of $\Real^{n+1}$; $B_R$ denotes the open ball in $\Real^{n+1}$ centered at $O$ with radius $R$ and $S_R=\partial B_R$; $D_R$ denotes the open disk in $\Real^n\times\{0\}$ centered at the origin with radius $R$. We say that $C\subset\Real^{n+1}$ is a regular cone with vertex at $O$, if
\begin{equation}
\label{RegConeEqn}
C=\left\{l\Gamma,\ 0\le l<\infty\right\},
\end{equation}
where $\Gamma$ is a smooth connected closed (compact without boundary) embedded submanifold of $S_1$ with codimension one. Note that the normal component of the position vector on $C$ vanishes and $C\setminus\{O\}$ is smooth. In this paper, we study the uniqueness for smooth properly embedded self-shrinking ends in $\Real^{n+1}$ that are asymptotic to a given regular cone. Namely, we show:

\begin{thm}
\label{UniqueThm}
Let $C\subset\Real^{n+1}$ be a regular cone with vertex at $O$ and $R_0$ a positive constant. Suppose that $\Sigma$ and $\tilde{\Sigma}$ are smooth, connected, properly embedded self-shrinkers in $\Real^{n+1}\setminus B_{R_0}$ with their boundaries in $S_{R_0}$. If $\Sigma$ and $\tilde{\Sigma}$ are asymptotic to the same cone $C$, i.e. $\lam\Sigma$ and $\lam\tilde{\Sigma}$ converge to $C$ locally smoothly as $\lam\To 0+$\footnote{More precisely, we mean that, for $\forall R>0$ and $k\in\mathbb{N}$, $\lam\Sigma\cap\left(\bar{B}_R\setminus B_{1/R}\right)$ and $\lam\tilde{\Sigma}\cap\left(\bar{B}_R\setminus B_{1/R}\right)$ converge to $C\cap\left(\bar{B}_R\setminus B_{1/R}\right)$ in the $C^k$ topology, as $\lam\To 0+$.}, then $\Sigma$ coincides with $\tilde{\Sigma}$.
\end{thm}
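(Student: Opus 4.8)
The plan is to reformulate the self-shrinker equation as a parabolic PDE and invoke a backward-uniqueness theorem. Write both $\Sigma$ and $\tilde\Sigma$ as graphs over the cone $C$ for $R$ large, say $\Sigma = \{y + u(y)\mathbf{n}_C(y) : y \in C, |y| > R_1\}$ and similarly $\tilde\Sigma$ with graph function $\tilde u$; the asymptotically conical hypothesis guarantees that $u, \tilde u$ together with all their derivatives decay to zero at infinity (after rescaling, the $C^k$ convergence of $\lambda\Sigma$ to $C$ translates into scale-invariant decay of $u$). Introduce a logarithmic change of the radial variable on the cone, $t = \log r$ or $t = r^2/2$ chosen so that the dilation structure of \eqref{SelfshrinkerEqn} becomes a translation/time variable; under this substitution the self-shrinker equation for $u$ becomes a (quasilinear, uniformly parabolic after linearization along the decaying solution) equation of the form $\partial_t u = a^{ij}(y,u,\nabla u)\nabla_i\nabla_j u + b$, and the statement ``$\Sigma$ asymptotic to $C$'' becomes ``$u \to 0$ as $t \to \infty$,'' i.e. a condition at the ``initial'' end of the parabolic flow.

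Next I would form the difference $w = u - \tilde u$. Since both $u$ and $\tilde u$ solve the same quasilinear equation and both decay at infinity, $w$ satisfies a linear uniformly parabolic equation $\partial_t w = a^{ij}\nabla_i\nabla_j w + b^i\nabla_i w + c\, w$ with coefficients controlled (bounded, with bounded derivatives) thanks to the decay of $u, \tilde u$, and $w \to 0$ as $t \to \infty$. The goal is to show $w \equiv 0$ on the end. This is exactly the setting of \emph{backward uniqueness} for parabolic equations: a solution that vanishes at a later time (here ``$t = \infty$'') and satisfies suitable growth bounds must vanish identically. I would apply a Carleman-estimate-based backward uniqueness result (in the spirit of Escauriaza--Seregin--\v{S}ver\'ak, or Lotay--Schulze type adaptations to this geometric setting), which requires an energy/frequency monotonicity argument: one shows that the Almgren-type frequency function $N(t) = \frac{\int |\nabla w|^2 e^{-f}}{\int w^2 e^{-f}}$ (weighted appropriately by the Gaussian-type weight coming from the shrinker structure and the conical geometry) is almost monotone, so that $w$ cannot decay faster than exponentially unless it is identically zero; combined with $w \to 0$, this forces $w \equiv 0$ near infinity.

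The final step is a unique continuation / connectedness argument to propagate $\Sigma = \tilde\Sigma$ from a neighborhood of infinity to all of $\Real^{n+1}\setminus B_{R_0}$: once $\Sigma$ and $\tilde\Sigma$ agree on an open subset of the (connected, real-analytic since self-shrinkers satisfy an elliptic PDE with analytic coefficients) hypersurfaces, standard strong unique continuation for the elliptic self-shrinker equation — or analyticity — gives that they coincide on the whole connected components, hence $\Sigma = \tilde\Sigma$. The main obstacle I anticipate is the backward-uniqueness step: the domain is noncompact with a conical end and the natural weight degenerates, so deriving the Carleman estimates with the correct weight (one must choose weights adapted simultaneously to the parabolic rescaling and to the linear growth of the cone, and control the error terms from the quasilinear coefficients $a^{ij}$ depending on $u,\nabla u$) is delicate; establishing the requisite a priori decay of $u$ (faster than any polynomial, or at least fast enough to feed the Carleman inequality) from the mere locally-smooth convergence hypothesis, likely via an iteration using the equation itself, is the technical heart of the argument.
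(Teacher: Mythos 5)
Your high-level strategy matches the paper's: reduce the rigidity problem to a backward-uniqueness statement for a parabolic equation, attack it with Carleman estimates in the spirit of Escauriaza--Seregin--\v{S}ver\'{a}k, obtain a preliminary super-polynomial decay estimate for the graph function from the equation itself, and finish with unique continuation (or analyticity) inside the compact region. That is the correct shape of the argument, and you also correctly identify the two technical bottlenecks: the degeneracy/weight problem in the Carleman step and the need to upgrade the assumed decay to Gaussian decay before the weight can do its work.

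However, there is a genuine gap in the core reformulation. You propose to write both ends as graphs over the cone $C$ and convert the elliptic graph equation into a parabolic one by a substitution $t=\log r$ or $t=r^2/2$ in the radial variable. This cannot work: the induced Laplacian on the cone in polar coordinates $(r,\theta)$ contains both a $\partial_r^2$ term and a $r^{-2}\Delta_\Gamma$ term, while the drift term $-\tfrac{1}{2}\la x,\nabla\ra$ contributes $-\tfrac{1}{2}r\partial_r$. Under $t=\log r$ one gets $\partial_r^2 = r^{-2}(\partial_t^2-\partial_t)$, under $t=r^2/2$ one gets $\partial_r^2 = 2t\partial_t^2+\partial_t$; in either case the transformed operator is still second order in $t$ and remains elliptic and degenerate, not parabolic. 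There is no change of a single radial variable that removes the second radial derivative and isolates a first-order time derivative. The paper resolves this with a different, and essential, idea: it writes $\tilde{\Sigma}$ as a graph of a function $v$ over $\Sigma$ (not both over $C$), derives the stability-operator-type elliptic equation $\Delta_{\Sigma}v-\tfrac12\la x,\nabla_{\Sigma}v\ra+(|A|^2+\tfrac12)v = \text{error}$, and then introduces a \emph{genuine extra time variable} by setting $w(x,t)=\sqrt{t}\ v(x/\sqrt{t})$ on the self-shrinking flow $\Sigma_t=\sqrt{t}\,\Sigma$, $t\in(0,1]$. Because $\Sigma_t$ moves by (backward) mean curvature, the drift term $-\tfrac12\la x,\nabla_{\Sigma}v\ra$ is exactly what is absorbed when $d/dt$ acts along the flow, and one obtains the clean heat-operator inequality $|dw/dt+\Delta_{\Sigma_t}w|\le\varepsilon(|w|+|\nabla_{\Sigma_t}w|)$, with the vanishing datum $w(\cdot,0)\equiv0$ living on the cone $\Sigma_0=C$ itself. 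The $n$ spatial dimensions are kept; one extra dimension of time is added. This parabolization is the step your proposal is missing, and without it the subsequent Carleman machinery has nothing to act on. Your remaining outline (first Carleman argument to get $\exp(-M|x|^2)$ decay, second Carleman argument with a spacetime weight of type $\exp[2\alpha(T-t)(|x|^{1+\delta}-R^{1+\delta})+2|x|^2]$, then strong unique continuation near $S_{R_0}$) agrees with the paper's once the correct parabolic reformulation is in place.
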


In $\Real^3$, using the desingularization technique, infinitely many smooth complete properly embedded self-shrinkers with discrete rotational symmetries have been successfully constructed by Kapouleas, Kleene and M\o{}ller, and independently by Nguyen; see the recent preprints \cite{KKM} and \cite{NgX1,NgX2,NgX3}. Moreover, the end of each self-shrinker above is a connected graph over the plane outside some compact set and asymptotic to a regular cone. 

It is interesting to compare Theorem \ref{UniqueThm} with the following well-known conjecture (page 39 of \cite{Il2}):

\begin{conj}
\label{AsympConj}
Let $\Sigma\subset\Real^3$ be a smooth complete embedded self-shrinker with at most quadratic area growth\footnote{These assumptions on self-shrinkers in Conjectures \ref{AsympConj} and \ref{CylRigConj} are implied from the content of Lecture 3 in \cite{Il2}.}. Then there exists $R>0$ such that $\Sigma\setminus B_R$ decomposes into a finite number of ends $U_j$ and for each $j$, either
\begin{itemize}
\item[(a)] As $\lam\To 0+$, $\lam U_j$ converges locally smoothly to a cone $C_j$ such that $C_j\setminus\{O\}$ is smooth.
\item[(b)] There is a vector $v_j$ such that, as $\tau\To +\infty$, $U_j-\tau v_j$ converges to the cylinder $\left\{x:\emph{dist}(x,\emph{span}(v_j))=\sqrt{2}\right\}$.
\end{itemize} 
\end{conj}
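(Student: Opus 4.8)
The plan for Conjecture~\ref{AsympConj} is to read off the asymptotic geometry of $\Sigma$ from its blowdowns and then identify each end with a model. Quadratic area growth makes the Gaussian area (equivalently, Colding--Minicozzi's entropy) of $\Sigma$ finite, so the areas of $\lam\Sigma\cap B_1$ stay uniformly bounded as $\lam\To 0+$. Combined with the curvature decay $|A_\Sigma|(x)=O(|x|^{-1})$ that quadratic area growth yields for shrinkers (via a blowup argument, following Colding--Minicozzi), this forces $\lam\Sigma\cap(B_2\setminus B_{1/2})$ to have uniformly bounded geometry, so along any $\lam_i\To 0+$ a subsequence of $\lam_i\Sigma$ converges, as varifolds and smoothly away from $O$, to a cone $C_\infty$ in $\Real^3$ with smooth link. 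Fix $R$ large so that $\Sigma\setminus B_R$ has no compact components and decompose it into its (finitely many, since $\Sigma\cap S_R$ is a compact $1$-manifold) unbounded components $U_j$. The heart of the matter is to show each $U_j$ falls into exactly one of two cases: either $\lam_i U_j$ ``survives'' the blowdown, converging to a genuine $2$-dimensional cone $C_j$ (case (a)), or the cross-sections of $U_j$ shrink, so that $\lam_i U_j$ ``collapses'' onto a single ray $\Real_+v_j$ (case (b)).

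In case (a) I would first rule out loss of density, so that $\lam_i U_j\to C_j$ smoothly and with multiplicity one, where $C_j\setminus\{O\}$ is a smooth cone with connected link; the relevant no-loss-of-density statement for blowdowns of properly embedded shrinkers of finite entropy is in the spirit of work of Ilmanen and of Colding--Minicozzi. The convergence then writes $U_j\setminus B_R$, for $R$ large, as a normal graph of a function $u$ over the truncated cone $C_j\setminus B_R$, with $u\To 0$ in $C^1$ at infinity; substituting into the self-shrinker equation gives an elliptic equation for $u$ that is asymptotic (after the scaling $r=|x|$) to a drift-Laplace equation on $C_j$, and a separation-of-variables analysis in $r$ --- treating the angular variable spectrally, as in the asymptotics of minimal graphs --- yields $C^k$ decay of $u$ for every $k$, i.e.\ conclusion (a). The one genuinely nonroutine point is upgrading \emph{subsequential} smooth convergence of $\lam U_j$ to \emph{full} convergence as $\lam\To 0+$, i.e.\ uniqueness of the tangent cone at infinity; I would get this from a \L{}ojasiewicz--Simon inequality for the Gaussian-area functional, in parallel with the uniqueness theory for tangent cones of minimal surfaces (Allard--Almgren, L.~Simon) adapted to the weighted setting.

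In case (b), $\lam_i U_j$ concentrating on a ray $\Real_+v_j$ ($|v_j|=1$) forces $U_j$, outside a large ball, into an arbitrarily narrow cone about $v_j$; I would then translate, studying $U_j-\tau v_j$ as $\tau\To+\infty$. The shrinker equation becomes $H=\tfrac12\langle y,\mathbf n\rangle+\tfrac{\tau}{2}\langle v_j,\mathbf n\rangle$ on the translated surface, so a nontrivial subsequential limit $\Sigma_\infty$ must have $\langle v_j,\mathbf n\rangle\equiv 0$ and hence is translation-invariant along $v_j$: $\Sigma_\infty=\gamma\times\Real v_j$ with $\gamma\subset v_j^\perp$ a complete embedded self-shrinking curve. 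By the Abresch--Langer (Gage--Hamilton) classification, and since the collapse excludes $\gamma$ being a line through $O$, the only possibility is the round circle of radius $\sqrt2$, so $\Sigma_\infty=\{x:\mathrm{dist}(x,\Real v_j)=\sqrt2\}$. Turning this into the full assertion of (b) --- uniqueness of the limiting cylinder, of its axis direction $v_j$, and smoothness of the convergence --- again requires a \L{}ojasiewicz--Simon argument, now centered at the cylinder, which is more delicate because the cylinder is a non-integrable critical point of the weighted functional with noncompact (rotational and translational) kernel, exactly the situation in Colding--Minicozzi's uniqueness-of-blowup theory.

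The steps I expect to be the main obstacles are two. First, the no-loss-of-density assertion for the blowdown of each end: without it, the ``graph over a cone'' versus ``collapse onto a ray'' dichotomy need not be exhaustive, and the curvature control near infinity underlying everything else is unavailable; controlling this seems to require either a strong entropy bound or exploiting embeddedness and quadratic area growth directly. Second, the uniqueness of the asymptotic model in each case --- tangent cone at infinity in (a), asymptotic cylinder and axis in (b) --- a \L{}ojasiewicz--Simon problem that is substantially harder in the cylindrical case because of the non-integrable kernel. Under the additional hypothesis that $\Sigma$ has finite topology the ends are eventually annuli and the whole scheme can be pushed through; in full generality, handling arbitrary topology (in particular, ruling out infinitely many or pathological ends) appears to be the essential remaining difficulty.
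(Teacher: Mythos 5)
The statement you are trying to prove is not a theorem of this paper: it is Ilmanen's conjecture (quoted from page 39 of his unpublished lecture notes), stated here only as context for Theorem \ref{UniqueThm} and used as an explicit \emph{hypothesis} in Corollary \ref{GapCor}. The paper offers no proof, and at the time of writing none was known, so there is no ``paper's proof'' to compare yours against. What you have written is a roadmap for attacking an open problem, and you have been honest that it is not complete; the right assessment is therefore to identify which of your steps are genuinely unproved rather than merely technical.

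The most serious gap is at the very start, before either case (a) or case (b) is reached. You assert that quadratic area growth yields the curvature decay $\abs{A}(x)=O(\abs{x}^{-1})$ ``via a blowup argument, following Colding--Minicozzi,'' and that consequently every blowdown sequence $\lam_i\Sigma$ subconverges smoothly away from $O$ to a cone with smooth link. Neither claim follows from the stated hypotheses. There is no a priori pointwise curvature estimate for embedded shrinkers from an area bound alone: the Colding--Minicozzi compactness theorem in $\Real^3$ requires a genus bound in addition to an entropy bound, and without some control on the topology of $\Sigma$ near infinity the blowup argument produces a complete shrinker (or minimal surface) limit that you cannot rule out being nonflat. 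Without this curvature decay the varifold blowdown limit need not be smooth away from $O$, need not have multiplicity one, and need not even be a cone of the regular type in (\ref{RegConeEqn}); in particular your dichotomy ``graph over a $2$-dimensional cone versus collapse onto a ray'' is not exhaustive, which is precisely the content of the conjecture. The second unproved step is the upgrade from subsequential to full convergence: uniqueness of the tangent cone at infinity in case (a), and uniqueness of the asymptotic cylinder and its axis in case (b). You correctly flag that this is a \L{}ojasiewicz--Simon problem and that the cylindrical case is non-integrable with noncompact kernel, but flagging it does not supply the inequality; note that Theorem \ref{UniqueThm} of this paper is itself a substitute for such a uniqueness statement in the conical case, proved by entirely different (Carleman/backward-uniqueness) methods precisely because the \L{}ojasiewicz--Simon route was not available. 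In short: your outline is a reasonable description of how later work (under a finite-topology hypothesis) eventually proceeded, but as a proof of the conjecture as stated it assumes its two hardest ingredients.
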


All known examples of smooth complete embedded self-shrinkers and a lot of numerical evidence suggest Conjecture \ref{AsympConj} to be true; see \cite{KKM}, \cite{NgX1,NgX2,NgX3} and \cite{Ch}. Moreover, another conjecture on rigidity of the self-shrinking cylinder (see page 39 of \cite{Il2}) states that,

\begin{conj}
\label{CylRigConj}
Let $\Sigma\subset\Real^3$ be a smooth complete embedded self-shrinker with at most quadratic area growth. If one end of $\Sigma$ is asymptotic to a cylinder (see (b) of Conjecture \ref{AsympConj}), then $\Sigma$ is isometric to the self-shrinking cylinder.  
\end{conj}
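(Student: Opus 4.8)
\medskip
\noindent\emph{A proof strategy for Conjecture \ref{CylRigConj}.} The plan is to establish the cylindrical analogue of Theorem \ref{UniqueThm} — uniqueness of self-shrinking ends asymptotic to a fixed cylinder — and then to propagate the coincidence to all of $\Sigma$ by real analyticity. By (b), after a rotation one end $U$ of $\Sigma$ satisfies $U-\tau v\To\mathcal C:=\{x\in\Real^{3}:\mathrm{dist}(x,\mathrm{span}(v))=\sqrt2\}$ locally smoothly as $\tau\To+\infty$, and $\mathcal C$ is itself the round self-shrinking cylinder. Since $\Sigma$ is embedded and the convergence is smooth, for large $\tau_{0}$ the end is a one-sheeted normal graph $U=\{x+u(x)\,\mathbf n_{\mathcal C}(x):x\in\mathcal C,\ s:=\la x,v\ra>\tau_{0}\}$, with $u$ and all its derivatives tending to $0$ as $s\To\infty$, uniformly in the circle variable $\theta$. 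Substituting into \eqref{SelfshrinkerEqn} yields a quasilinear elliptic equation for $u$ on $\mathbb S^{1}\times(\tau_{0},\infty)$ whose linearization at $u\equiv0$ is the Jacobi operator of $\mathcal C$, namely $\mathcal L u=\partial_{ss}u+\Delta_{\mathbb S^{1}_{\sqrt2}}u-\tfrac{s}{2}\,\partial_{s}u+u$, the nonlinear remainder being genuinely quadratic in $(u,\nabla u,\nabla^{2}u)$.

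\emph{Step 1: structure of the end.} Expanding $u=\sum_{j}a_{j}(s)e^{ij\theta}$ and comparing with the model ODE $a_{j}''-\tfrac{s}{2}a_{j}'+(1-\tfrac{j^{2}}{2})a_{j}=0$, whose two linearly independent solutions behave like $s^{2-j^{2}}$ and like $e^{s^{2}/4}s^{j^{2}-3}$, one sees that $u\To0$ forces the modes $j=0,\pm1$ — both of whose model solutions grow — to vanish identically for $s$ large (here the quadratic nature of the nonlinearity is used in a contraction argument), while the surviving modes decay at definite rates $|a_{j}|\lesssim s^{2-j^{2}}$, the slowest being $O(s^{-2})$ from $j=\pm2$. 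A Schauder/Moser bootstrap then upgrades this to $C^{k}$ decay of $u$.

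\emph{Step 2: backward uniqueness, the crux.} Let $\tilde U$ be a second self-shrinking end asymptotic to the same cylinder $\mathcal C$, with graph function $\tilde u$; then $w=u-\tilde u$ solves a linear uniformly elliptic equation on $\mathbb S^{1}\times(\tau_{1},\infty)$ with leading axial part $\partial_{ss}-\tfrac{s}{2}\partial_{s}$, constant second-order coefficients, and lower-order coefficients that are $o(1)$ as $s\To\infty$; by Step 1 both $w$ and $\nabla w$ are $O(s^{-2})$, and the goal is to deduce $w\equiv0$ near infinity. The obstruction is twofold. First, the approach to $\mathcal C$ is only \emph{polynomial}, so — unlike the blown-down conical picture of Theorem \ref{UniqueThm}, where the cross-section is compact, the convergence exponential, and an off-the-shelf parabolic backward-uniqueness theorem applies — no such direct device is available. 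Second, and worse, after the drift-removing conjugation $w=e^{s^{2}/8}g$, which turns the equation into $\partial_{ss}g+\Delta_{\mathbb S^{1}_{\sqrt2}}g+(-\tfrac{s^{2}}{16}+O(1))g+(\mathrm{pert})=0$, the smallness $g=O(s^{-2}e^{-s^{2}/8})$ lies exactly at the recessive rate of this confining-potential equation, so a Carleman inequality with a standard quadratically growing weight cannot close. One must instead run a critical Carleman argument — with a finely tuned weight exploiting the precise drift and the constant leading part, absorbing the angular Laplacian and the $o(1)$ terms uniformly in $\theta$ — possibly combined with a mode-by-mode bootstrap pushing the decay of $w$ past the recessive threshold; this is the step I expect to be the main obstacle. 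Granting it, $w\equiv0$ for $s$ large; specializing $\tilde U$ to an honest far piece of $\mathcal C$ then shows that $U$, and hence $\Sigma$, contains a nonempty open subset of $\mathcal C$.

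\emph{Step 3: propagation over $\Sigma$.} Self-shrinkers satisfy an elliptic equation with real-analytic coefficients, hence are real-analytic hypersurfaces, and so is $\mathcal C$; two connected real-analytic hypersurfaces of $\Real^{3}$ that agree on a nonempty open set agree on the whole connected component, so $\Sigma\subseteq\mathcal C$ by Step 2. Since $\Sigma$ is complete, properly embedded, and $\mathcal C$ is connected, $\Sigma=\mathcal C$; that is, $\Sigma$ is a rigid motion of the self-shrinking cylinder, as claimed. The quadratic area growth and embeddedness hypotheses enter in securing the end decomposition behind (b) and the one-sheeted graph above, and in passing from $\Sigma\subseteq\mathcal C$ to $\Sigma=\mathcal C$.
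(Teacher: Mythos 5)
This statement is labeled a \emph{Conjecture} in the paper, and the paper contains no proof of it to compare yours against: the author explicitly defers it to the sequel \cite{W2}, where it is verified only \emph{under additional conditions on the rate of convergence at infinity}, via two anisotropic Carleman inequalities. So the question is whether your proposal closes the gap on its own, and it does not. The decisive step is exactly the one you flag and then skip: in Step 2 you write ``Granting it, $w\equiv 0$ for $s$ large.'' That backward-uniqueness statement for the cylindrical end is the entire content of the conjecture's difficulty. Your own analysis shows why no known device applies: after the conjugation $w=e^{s^{2}/8}g$ the available smallness $g=O(s^{-2}e^{-s^{2}/8})$ sits precisely at the recessive rate of the resulting confining-potential equation, so the Carleman weights used in \cite{ESS}, \cite{EF}, \cite{NgT} (and in the proof of Theorem \ref{UniqueThm}, where the Gaussian-exponential decay of Lemma \ref{DecayLem} gives room to spare) cannot absorb the error terms. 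Asserting that ``a finely tuned weight'' or ``a mode-by-mode bootstrap'' will work is a research program, not a proof; this is the step the paper itself identifies as requiring new Carleman machinery and extra decay hypotheses.

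There is also a secondary gap in Step 1. For the nonlinear equation the modes $j=0,\pm1$ do not ``vanish identically for $s$ large'': since both model solutions of $a_j''-\tfrac{s}{2}a_j'+(1-\tfrac{j^2}{2})a_j=0$ grow for $|j|\le 1$, the decay hypothesis only forces these modes to be slaved to the quadratic source built from the higher modes, so one expects $a_0,a_{\pm1}=O(s^{-4})$ rather than $0$; establishing even the asserted $|a_j|\lesssim s^{2-j^2}$ decay (uniqueness of the asymptotic expansion, closing the contraction in the presence of the non-decaying drift $-\tfrac{s}{2}\partial_s$) is itself a substantial piece of work and is precisely the kind of ``additional condition on the rate of convergence'' the sequel assumes. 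Step 3 (real-analytic continuation plus completeness) is fine and matches the end-game of Theorem \ref{UniqueThm}, but it only fires once Steps 1 and 2 are genuinely established.
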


The polynomial volume growth condition on self-shrinkers arises naturally from the analysis of asymptotic behaviour for singularities of the mean curvature flow; see \cite{H1,H2}, \cite{Il2}, \cite{Ec} and \cite{CM2,CM3}. Recently, an equivalence has been shown between Euclidean volume growth and properness of an smooth complete immersed self-shrinker in Euclidean space; see \cite{DX} and \cite{CZh}. Thus it is natural to ask that, given a regular cone $C\subset\Real^3$ with vertex at $O$, how many smooth complete properly embedded self-shrinkers there exist having an end asymptotic to $C$ in $\Real^3$ (see (a) of Conjecture \ref{AsympConj}). Theorem \ref{UniqueThm} gives an upper bound, which is one, to this question. Furthermore, in the sequel paper \cite{W2}, we establish two anisotropic Carleman inequalities and verify Conjecture \ref{CylRigConj} under additional conditions on the rate of convergence at infinity.

Many numerical examples in \cite{Ch} indicate that it is very difficult to classify all the smooth complete embedded self-shrinkers in $\Real^{n+1}$. However, under certain conditions, several results on the classification for self-shrinkers have been obtained since the 1980s. For $n=1$, Abresch and Langer, \cite{AL}, had already shown that the circle is the only simple closed self-shrinking curve. For higher dimensions, Colding and Minicozzi, \cite{CM2}, proved that the only smooth complete embedded self-shrinkers in $\Real^{n+1}$ that are mean convex and have polynomial volume growth are $S^k(\sqrt{2k})\times\Real^{n-k}$, $0\le k\le n$, which generalized an earlier result of Huisken, \cite{H1,H2}. Here $S^k(\sqrt{2k})$ denotes the $k$-dimensional round sphere centered at the origin with radius $\sqrt{2k}$. Moreover, in the same paper, they showed that those self-shrinkers are the only entropy stable ones under the mean curvature flow. Also, in \cite{W1}, we established a Bernstein type theorem for smooth self-shrinkers in $\Real^{n+1}$, which generalized a result of Ecker and Huisken, \cite{EH1}.  Besides, by imposing symmetries, Kleene and M\o{}ller, \cite{KM}, classified the smooth complete embedded self-shrinking hypersurfaces of revolution in $\Real^{n+1}$. In contrast to the previously mentioned results, the feature of Theorem \ref{UniqueThm} is that we do not require either information on self-shrinkers inside compact sets or any assumption on symmetries of self-shrinkers.  

We give a sketch of the proof of Theorem \ref{UniqueThm} in the following. First, outside some compact set $K\subset\Real^{n+1}$, we write $\tilde{\Sigma}\setminus K$ as the graph of a function $v$, which is defined over $\Sigma$ and vanishes at infinity with a certain rate. Next, we derive the differential equation for the function $v$ (see (\ref{JacobiEqn}) in section 2) which involves the Ornstein-Uhlenbeck type operator. One main difficulty comes from that, when changing infinity to the origin, the resulted equation is elliptic but highly singular and degenerate at the origin. Besides, under our assumptions, $v$ need not vanish to infinite order asymptotically. Thus, we cannot apply the strong unique continuation theorems in \cite{GL1,GL2}, \cite{J} or \cite{JK} to conclude that $v$ is identically zero. Instead, we consider a new function $w$, which is defined by a suitable scaling of $v$ on a domain of the space-time, and derive the differential equation for $w$ (see (\ref{HeatEqn}) in section 3). Finally, we show the backward uniqueness for the parabolic equation for $w$. The point of our backward uniqueness result is that the values of $w$ at the parabolic boundary of the domain are not controlled by the assumptions. The idea of the proof is borrowed from the paper \cite {ESS}, in which Escauriaza, Seregin and \v{S}ver\'{a}k proved a similar backward uniqueness for parabolic equations, which are linear perturbations of the standard heat equation in Euclidean space. As applications, they settled a long-standing question concerning sufficient conditions for regularity of solutions for the Navier-Stokes equations in $\Real^3$; see \cite{ESS}. For more bibliographic information on the backward uniqueness for parabolic equations, we recommend the readers refer to \cite{EF}, \cite{EKPV}, \cite{ESS,ESS1}, \cite{K1,K2}, \cite{LS}, \cite{L}, \cite{MZ}, \cite{NgT}, \cite{P}, \cite{Ru}, \cite{SS} and \cite{So}.

There are several interesting applications of our uniqueness theorem. For instance, by Theorem \ref{UniqueThm}, it is not hard to show that not every regular cone with vertex at $O$ has a smooth complete properly embedded self-shrinker asymptotic to it. Consider a rotationally symmetric regular cone $C\subset\Real^{n+1}$ with vertex at $O$. In \cite{KM}, Kleene and M\o{}ller constructed a smooth embedded self-shrinking end of revolution in $\Real^{n+1}$ that is asymptotic to $C$. Thus, Theorem \ref{UniqueThm} implies that any smooth, connected, properly embedded self-shrinker that is asymptotic to $C$ must have a rotational symmetry. Hence, using the classification result for smooth complete embedded self-shrinkers with a rotational symmetry (see \cite{KM}), we conclude that

\begin{cor}
\label{NonexistCor}
Let $C\subset\Real^{n+1}$ be a rotationally symmetric regular cone with vertex at $O$. Assume that $C$ is not a hyperplane. Then there do not exist smooth complete properly embedded self-shrinkers in $\Real^{n+1}$ with an end asymptotic to $C$.  
\end{cor}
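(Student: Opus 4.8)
The plan is to combine the uniqueness theorem with the known construction and classification results in the rotationally symmetric setting. Let $C\subset\Real^{n+1}$ be a rotationally symmetric regular cone with vertex at $O$ that is not a hyperplane, and suppose for contradiction that $\Sigma\subset\Real^{n+1}$ is a smooth complete properly embedded self-shrinker with an end asymptotic to $C$. First I would invoke the work of Kleene and M\o{}ller, \cite{KM}: since $C$ is rotationally symmetric, they produce a smooth embedded self-shrinking end $\Sigma_0$ of revolution in $\Real^{n+1}$ that is asymptotic to $C$. Restricting both $\Sigma$ and $\Sigma_0$ to the region $\Real^{n+1}\setminus B_{R_0}$ for a suitable $R_0>0$ (large enough that the relevant end of $\Sigma$ is a smooth properly embedded self-shrinker there with boundary in $S_{R_0}$, and that $\Sigma_0$ is defined there with boundary in $S_{R_0}$), both pieces are smooth, connected, properly embedded self-shrinkers asymptotic to the same cone $C$. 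Theorem \ref{UniqueThm} then forces the end of $\Sigma$ to coincide with $\Sigma_0$ on $\Real^{n+1}\setminus B_{R_0}$; in particular this end of $\Sigma$ is rotationally symmetric.

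Next I would upgrade from ``one end is rotationally symmetric'' to ``$\Sigma$ is rotationally symmetric.'' Since $\Sigma$ is a connected analytic hypersurface (self-shrinkers satisfy an elliptic equation with analytic coefficients, hence are real-analytic), and it agrees with the rotationally invariant hypersurface $\Sigma_0$ on the nonempty open set $\Sigma\cap(\Real^{n+1}\setminus B_{R_0})$, unique continuation for analytic hypersurfaces propagates the rotational symmetry: for any fixed rotation $\rho\in SO(n)$ fixing the axis of $C$, the analytic hypersurfaces $\Sigma$ and $\rho(\Sigma)$ coincide on an open set and hence globally. Thus $\Sigma$ is a smooth complete embedded self-shrinking hypersurface of revolution in $\Real^{n+1}$. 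Finally I would apply the classification of Kleene and M\o{}ller, \cite{KM}, of smooth complete embedded self-shrinkers with a rotational symmetry: the only such hypersurfaces are the hyperplane through the origin orthogonal to the axis, the sphere $S^n(\sqrt{2n})$, the cylinders $S^k(\sqrt{2k})\times\Real^{n-k}$, and — crucially — none of these noncompact rotationally symmetric examples other than the hyperplane has an end that is asymptotic to a cone (the cylindrical ends converge to cylinders, not cones, after rescaling, and the ``bowl''-type immersed examples are not complete embedded self-shrinkers in the required sense). Since by hypothesis $C$ is not a hyperplane, and $\Sigma$ has an end asymptotic to $C$, this is a contradiction, proving that no such $\Sigma$ exists.

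The main obstacle I anticipate is the passage from local to global rotational symmetry and the careful bookkeeping in the last step. One must make sure the analyticity-based unique continuation argument is legitimate for hypersurfaces (rather than for solutions on a fixed domain) — this is where writing $\Sigma$ locally as a graph over $\Sigma_0$ and applying analytic unique continuation to the graph function, as is done for the uniqueness theorem itself, is the clean route. The other delicate point is invoking the Kleene--M\o{}ller classification precisely enough to rule out every rotationally symmetric complete embedded self-shrinker with a conical (non-planar) end; here one uses that such a self-shrinker, being asymptotic to the cone $C$ at infinity with $C$ not a hyperplane, cannot be compact (a compact self-shrinker of revolution is the round sphere, whose rescalings shrink to a point, not a cone) and cannot be one of the cylinders (whose rescalings do not converge to a cone with smooth link unless the cone degenerates), leaving no candidate and completing the contradiction.
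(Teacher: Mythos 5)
Your proposal is correct and follows essentially the same route as the paper: invoke the Kleene--M\o{}ller rotationally symmetric end asymptotic to $C$, use Theorem~\ref{UniqueThm} to force any competitor to coincide with it near infinity, upgrade to global rotational symmetry (the analytic unique continuation step you spell out is the right way to make the paper's terse phrasing rigorous), and then appeal to the Kleene--M\o{}ller classification of complete embedded self-shrinkers of revolution to rule out a non-planar conical end. The only minor slip is in your recollection of that classification — the complete embedded rotationally symmetric examples in $\Real^{n+1}$ are the hyperplane, the round sphere, the single cylinder $S^{n-1}(\sqrt{2(n-1)})\times\Real$, and Angenent's torus, not the whole family $S^k\times\Real^{n-k}$ — but this does not affect the conclusion since none of these except the hyperplane has a conical end.
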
 

On the other hand, by Huisken's monotonicity formula (see \cite{H1,H2}), self-shrinkers are hypersurfaces in the Euclidean space that are minimal with respect to the Gaussian conformally changed metric; see \cite{A} and \cite{CM2,CM3}. It is well-known that one can construct many smooth complete properly embedded minimal surfaces with asymptotically planar ends, e.g. Costa-Hoffman-Meeks minimal surfaces (see \cite{Co} and \cite{HM}). However, in contrast to the minimal surfaces theory, it follows from Theorem \ref{UniqueThm} that the only smooth complete properly embedded self-shrinkers with ends asymptotic to hyperplanes are hyperplanes. This gives an explanation of the dramatic change of asymptotics of the non-compact ends occurring in the desingularization construction of self-shrinkers in \cite{KKM} and \cite{NgX1,NgX2,NgX3}\footnote{In those papers, one starts with surfaces by desingularizing the intersection of a sphere and a plane, but the resulted self-similar surfaces are asymptotic to cones (not planes). This phenomenon does not happen in the desingularization construction of minimal surfaces.}.

Also, assuming that Conjecture \ref{AsympConj} is true, a gap theorem for self-shrinkers can be established using Theorem \ref{UniqueThm}.

\begin{cor}
\label{GapCor}
Let $\Sigma\subset\Real^3$ be a smooth complete properly embedded self-shrinker. Assume that one end of $\Sigma$ is asymptotic to a regular cone and Conjecture \ref{AsympConj} is true. Then any smooth complete properly embedded self-shrinker in $\Real^3$ that has sublinear growth of Hausdorff distance from $\Sigma$ must coincide with $\Sigma$. 
\end{cor}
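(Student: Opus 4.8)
The plan is to use the hypothesized validity of Conjecture \ref{AsympConj} to reduce the ``sublinear Hausdorff growth'' hypothesis to the hypothesis of Theorem \ref{UniqueThm}, namely that two ends are asymptotic to the same cone. Let $\Sigma'$ be a second smooth complete properly embedded self-shrinker whose Hausdorff distance from $\Sigma$ grows sublinearly, i.e. $\sup_{x\in\Sigma'\cap S_R}\text{dist}(x,\Sigma)=o(R)$ as $R\To\infty$ (and symmetrically with the roles reversed). First I would apply Conjecture \ref{AsympConj} to both $\Sigma$ and $\Sigma'$: outside a large ball each decomposes into finitely many ends, each of which is either asymptotically conical (case (a)) or asymptotically cylindrical (case (b)). The key observation is that the sublinear distance bound forces the ends of $\Sigma'$ to have the \emph{same} asymptotic model as the ends of $\Sigma$ that they shadow. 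Indeed, if $\lam U_j\To C_j$ locally smoothly for an end $U_j$ of $\Sigma$, and an end $U'_k$ of $\Sigma'$ stays within Hausdorff distance $o(R)$ of $\Sigma$ at scale $R$, then after rescaling by $\lam=1/R\To 0$ the blow-up limit of $U'_k$ must be contained in (hence, by the structure of regular cones and unit density, equal to) $C_j$; the sublinear growth is exactly what guarantees that the error $o(R)/R\To 0$ disappears in the limit. Similarly, an end asymptotic to the cylinder $\{\text{dist}(x,\text{span}(v_j))=\sqrt 2\}$ translated by $\tau v_j$ can only be shadowed sublinearly by an end with the same axis $\text{span}(v_j)$ and the same translation direction, since the cylinder has fixed radius $\sqrt 2$ and a sublinearly-close hypersurface must converge to the same cylinder under $U'_k-\tau v_j$.

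Next I would treat the two cases. In the conical case: the end of $\Sigma$ asymptotic to a regular cone, together with a shadowing end of $\Sigma'$, are now two smooth properly embedded self-shrinking ends asymptotic to the same cone $C_j$, so Theorem \ref{UniqueThm} applies (after intersecting with $\Real^{n+1}\setminus B_{R_0}$ for a suitable $R_0$) and the two ends coincide on a neighborhood of infinity. In the cylindrical case: I would either invoke that no end of $\Sigma$ is actually of this type under the standing hypothesis that \emph{one} end of $\Sigma$ is asymptotically conical together with whatever structural input rules out mixed behavior, or more carefully argue that a cylindrical end cannot be shadowed by a \emph{genuinely distinct} self-shrinking end without the two agreeing — here the relevant uniqueness statement is Conjecture \ref{CylRigConj}-type rigidity, but since we only need the corollary as stated I would instead observe that the overlap already established on the conical ends, combined with analyticity of self-shrinkers (as minimal surfaces in the Gaussian metric, hence real-analytic) and connectedness of $\Sigma$ and $\Sigma'$, propagates the coincidence globally. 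Concretely, once an open subset of the real-analytic hypersurface $\Sigma'$ coincides with $\Sigma$, unique continuation for the (elliptic, analytic) self-shrinker equation forces $\Sigma'=\Sigma$ on the whole connected component, and properness plus embeddedness rules out the components wandering apart.

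The main obstacle I expect is the matching step: showing rigorously that sublinear Hausdorff proximity forces identical asymptotic cones (or cylinders), including the bookkeeping of which end of $\Sigma'$ shadows which end of $\Sigma$ when there are several ends, and ruling out pathologies where an end of $\Sigma'$ oscillates between being close to two different ends of $\Sigma$ at different scales. This requires using the local smooth convergence in Conjecture \ref{AsympConj} to upgrade Hausdorff (i.e. $C^0$) closeness at each scale to $C^\infty$ closeness of the blow-ups, and then a limiting argument that is uniform in the scale $\lam\To 0$; the properness and finiteness of ends are what make this tractable. A secondary subtlety is the passage from ``coincide near infinity'' to ``coincide everywhere,'' for which the real-analyticity of self-shrinkers and strong unique continuation for second-order elliptic equations are the natural tools, but one must check that $\Sigma'$ does not have extra ends or components unaccounted for by the shadowing — this is where completeness, properness, and the assumed validity of Conjecture \ref{AsympConj} for \emph{both} surfaces are jointly used.
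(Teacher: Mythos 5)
The paper states this corollary without proof, observing only in the introduction that it ``can be established using Theorem \ref{UniqueThm}'' once Conjecture \ref{AsympConj} is assumed, so there is no written argument to compare against. Your proposal fills in the natural argument and is essentially correct. The core chain is: apply Conjecture \ref{AsympConj} to both $\Sigma$ and $\Sigma'$; the two-sided sublinear Hausdorff bound forces the blow-downs $\lim_{\lam\to 0+}\lam\Sigma$ and $\lim_{\lam\to 0+}\lam\Sigma'$ to coincide as sets, because on the scale-$R$ annulus the error $o(R)/R\to 0$; since the hypothesized conical end of $\Sigma$ contributes a regular cone $C$ (codimension one, with connected link $\Gamma$) to the blow-down, and cylindrical ends contribute only half-lines, $\Sigma'$ must possess an end of type (a) asymptotic to the \emph{same} cone $C$; Theorem \ref{UniqueThm} then forces the two ends to coincide outside a compact set; finally, self-shrinkers are minimal hypersurfaces for a real-analytic conformally Euclidean metric and are therefore real-analytic, so two connected, complete, properly embedded such hypersurfaces agreeing on an open set agree everywhere, giving $\Sigma=\Sigma'$.

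Two points to tighten. First, the extended discussion of the cylindrical case (b) is unnecessary and, as phrased, somewhat misleading: an end of type (b) does \emph{not} have a cylindrical blow-down under $\lam\to 0+$; rescaling by $\lam$ sends the translating cylinder to the half-line $\{\tau v_j:\tau\ge 0\}$, so ``same asymptotic model under rescaling'' is not the right matching statement there. But none of that is needed here --- the hypothesis hands you a conical end of $\Sigma$, and real-analytic unique continuation propagates the coincidence globally from that single end, without ever invoking Conjecture \ref{CylRigConj} or any rigidity for cylindrical ends. Second, in the matching step you should make explicit that the sublinear bound is used in both directions (or that connectedness of $\Gamma$ forces $C$ into a single component $C'_j$ of the blow-down of $\Sigma'$ and vice versa), so you get equality $C=C'_j$ rather than a proper inclusion; ``unit density'' is not the cleanest phrase for this, since what you are really using is that regular cones are determined by their links and the blow-downs coincide as sets.
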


\textbf{Acknowledgement.} The author is indebted to Brett Kotschwar for bringing the paper \cite{ESS} to her attention and many inspiring discussions which stimulated the present paper. The author is also very grateful to Tom Ilmanen for many useful suggestions which led to the proof of Lemma \ref{PertLem}. Finally, the author would like to thank Jacob Bernstein and Tobias Colding for their interest in this work and many constructive comments on a preliminary draft. 

\section{Notations and Auxiliary Lemmas}

In this section, we set up the notations for this paper and prove several auxiliary lemmas. In particular, we show that, outside some compact set, $\tilde{\Sigma}$ can be written as the graph of a function $v$ over $\Sigma$ dacaying with a certain rate, and derive the differential equation for $v$. 

Throughout, for any multi-index $\alpha=\left(\alpha_1,\dots,\alpha_{\abs{\alpha}}\right)$, $\partial^{\abs{\alpha}}_\alpha$ denotes the $\abs{\alpha}$-th order partial derivative with respect to the $\alpha_i$-th, $1\le i\le\abs{\alpha}$, coordinates of Euclidean space; the meaning of $\partial_t$ may vary in lemmas and propositions, and we will clarify it in the content; $\nabla$, $\nabla^2$ and $\Delta$ denote the gradient, Hessian and Laplacian on the hypersurfaces appearing in the subscripts respectively; $A$ and $\nabla^i A$ are the second fundamental form and its $i$th covariant derivative respectively; constants in lemmas and propositions depend only on $n$, $\Sigma$, $\tilde{\Sigma}$ and $C$, unless it is specified; constants in the proofs are not preserved when crossing lemmas and propositions. 

Denote $\lam\Sigma\cap\left(\bar{B}_R\setminus B_{1/R}\right)$ by $\Sigma_{R,\lam}$ and $C\cap\left(\bar{B}_R\setminus B_{1/R}\right)$ by $C_R$. We recall that $\Sigma_{R,\lam}$ converges to $C_R$ in the $C^k$ topology as $\lam\To 0+$, if $\Sigma_{R,\lam}$ converges to $C_R$ in the Hausdorff topology, and for any $x\in C_R$ and $\lam>0$ small, $\Sigma_{R,\lam}$ locally (near $x$) is a graph over the tangent hyperplane $T_x C_R$ and the graph of $\lam_{R,\lam}$ converges to the graph of $C_R$ in the usual $C^k$ topology. 

Since $\Sigma$ is a self-shrinker under the mean curvature flow, $\left\{\Sigma_t=\sqrt{t}\ \Sigma\right\}_{t\in (0,1]}$ is a solution to the backward mean curvature flow, i.e.
\begin{equation}
\label{MCFEqn}
\partial_t x=H\textbf{n},
\end{equation}
for $x\in\Sigma_t$ and $t\in (0,1]$ (parametrizing $\Sigma_t$ in a suitable way). In other words, $\partial_t x$ stands for the normal velocity of the hypersurface.
We recommend the readers refer to Chapter 2 of \cite{Ec} for more details and other equivalent definitions. We begin with the following elementary lemma on the geometry of $\Sigma_t$.

\begin{lem}
\label{GeomLem}
There exist $C_1>0$ and $R_1\ge R_0$ such that for $x\in\Sigma_t\setminus B_{R_1}$, $t\in (0,1]$ and $0\le i\le 2$,
\begin{equation}
\label{GeomEqn}
\abs{\nabla^iA(x)}\le C_1\abs{x}^{-i-1}.
\end{equation}
\end{lem}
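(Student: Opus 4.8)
The plan is to exploit the fact that, after parabolic rescaling, the asymptotically conical condition translates into precise curvature decay, and that away from a fixed ball the self-shrinker equation \eqref{SelfshrinkerEqn} gives good control on the mean curvature, which can then be bootstrapped to all covariant derivatives of $A$ via the parabolic (in fact elliptic, since we are on a fixed time slice) estimates for $\Sigma_t$. First I would set up coordinates: for a point $x\in\Sigma_t\setminus B_{R_1}$ with $|x|=r$, consider the rescaled hypersurface $r^{-1}\Sigma_t$ near the point $r^{-1}x\in S_1$. Since $\Sigma$ is asymptotic to $C$ and $\Sigma_t=\sqrt t\,\Sigma$, for $r$ large the rescaled surface $r^{-1}\Sigma_t$ is, in a fixed-size neighborhood of $r^{-1}x$, a $C^k$-small graph over the cone $C$ (uniformly in $t\in(0,1]$, using that $t\le 1$ so $\sqrt t\,\Sigma\subset\Sigma$ is "further out" and the convergence hypothesis applies with $\lambda=\sqrt t/r\to 0$). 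The cone $C\setminus\{O\}$ is smooth and, being a cone, satisfies $|\nabla^i A_C|(y)=c_i|y|^{-i-1}$ for constants $c_i$ depending only on $\Gamma$. Hence on $r^{-1}\Sigma_t$ the curvature and its derivatives are bounded by a constant depending only on $n$ and $C$, and scaling back by $r$ gives $|\nabla^i A|(x)\le C_1 r^{-i-1}$, which is \eqref{GeomEqn}.

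To make the graphical bound uniform I would argue as follows. Fix $R$ large (to be chosen) and work with the family $\lambda\mapsto\lambda\Sigma$, $\lambda\in(0,\lambda_0]$. By the convergence hypothesis, $\lambda\Sigma\cap(\bar B_2\setminus B_{1/2})$ converges in $C^3$ to $C\cap(\bar B_2\setminus B_{1/2})$. In particular there is $\lambda_0>0$ so that for all $\lambda\le\lambda_0$ this piece of $\lambda\Sigma$ is a graph of a function $u_\lambda$ over the corresponding annular piece of $C$ with $\|u_\lambda\|_{C^3}\le\eta$, $\eta$ a small constant depending only on $C$. Translating this into a bound on the second fundamental form: at any $y\in\lambda\Sigma\cap(\bar B_{3/2}\setminus B_{2/3})$ we get $|A_{\lambda\Sigma}|(y)+|\nabla A_{\lambda\Sigma}|(y)+|\nabla^2 A_{\lambda\Sigma}|(y)\le C(n,C)$. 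Now given $x\in\Sigma_t\setminus B_{R_1}$ with $R_1=\sqrt2/\lambda_0$ (so $\lambda:=\sqrt t/|x|\le 1/(R_1)\le\lambda_0$ since $t\le1$), the point $\lambda x$ lies on $\lambda\Sigma_t\cap\partial B_{\sqrt t}\subset\lambda\Sigma\cap(\bar B_{3/2}\setminus B_{2/3})$ provided $\sqrt t\in[2/3,3/2]$; but $\lambda\Sigma_t=\lambda\sqrt t\,\Sigma$ and we are free to instead rescale by $\mu:=1/|x|$ so that $\mu\Sigma_t\ni\mu x\in S_{\sqrt t}$, and since $\mu=1/|x|\le 1/R_1\le\lambda_0$ and $\mu\Sigma_t=(\mu\sqrt t)\Sigma$ with $\mu\sqrt t\le\lambda_0$, the relevant piece is again $C^3$-close to $C$; the scaling relations $|\nabla^i A_{\mu\Sigma_t}|(\mu x)=\mu^{-i-1}|\nabla^i A_{\Sigma_t}|(x)$ then yield \eqref{GeomEqn} with $C_1=C_1(n,C)$.

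The main obstacle I anticipate is handling the $t$-dependence uniformly: one must ensure that a single $R_1$ and $C_1$ work for all $t\in(0,1]$, including $t\to0^+$, where $\Sigma_t$ degenerates to the vertex. This is resolved precisely by the observation that $\mu\Sigma_t=(\mu\sqrt t)\Sigma$, so rescaling a point of $\Sigma_t$ at distance $|x|$ from $O$ to unit distance amounts to looking at $\lambda\Sigma$ with $\lambda=\mu\sqrt t=\sqrt t/|x|\to0$ as $|x|\to\infty$ (uniformly in $t\le1$); the asymptotically conical hypothesis was stated exactly to control $\lambda\Sigma$ for small $\lambda$, so no separate small-$t$ analysis is needed. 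A minor technical point is converting $C^3$-graphical closeness to the cone into pointwise bounds on $\nabla^i A$ for $0\le i\le 2$: this is a routine computation expressing the second fundamental form and its covariant derivatives in graphical coordinates, using that the reference cone has $|\nabla^i A_C|\le c_i$ on the fixed annulus $\bar B_{3/2}\setminus B_{2/3}$ and that the $C^3$ norm of the defining function is small.
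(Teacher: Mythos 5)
Your argument is correct and is essentially the paper's proof. The paper first establishes the decay estimate on $\Sigma$ itself by rescaling a point $x\in\Sigma\setminus B_R$ by $\lambda=1/|x|$ (so $\lambda x\in S_1$) and invoking the locally smooth convergence $\lambda\Sigma\to C$, then deduces the bound on $\Sigma_t$ in one line from the scaling identity $\Sigma_t=\sqrt t\,\Sigma$; you combine these two steps by rescaling $\Sigma_t$ directly by $\mu=1/|x|$ and observing $\mu\Sigma_t=(\mu\sqrt t)\Sigma$ with $\mu\sqrt t\le\lambda_0$, which is the same computation. Two cosmetic remarks: the claim $\mu x\in S_{\sqrt t}$ should read $\mu x\in S_1$ (since $\mu=1/|x|$), and controlling $\nabla^2 A$ via graph coordinates requires $C^4$-closeness rather than $C^3$ (harmless, as the hypothesis gives $C^k$ convergence for all $k$); the paper avoids both issues by stating the curvature bound for $\lambda\Sigma$ on the annulus directly as a consequence of locally smooth convergence.
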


\begin{proof}
By (\ref{RegConeEqn}) and the fact that $\Gamma$ is a smooth closed embedded submanifold of $S_1$, the second fundamental form and all its covariant derivatives of $C$ are bounded inside the annulus $\bar{B}_2\setminus B_{1/2}$. Furthermore, since $\lam\Sigma\To C$ locally smoothly as $\lam\To 0+$, there exist $\delta_1,\lam_1>0$ such that: if $0<\lam<\delta_1$, then for $0\le i\le 2$, $\abs{\nabla^iA}\le \lam_1$ on $\lam\Sigma\cap\left(\bar{B}_2\setminus B_{1/2}\right)$. Thus, for $x\in\Sigma$ with $\abs{x}\ge R=2\max\{R_0, 1/\delta_1\}$, we can choose $\lam=1/\abs{x}$ such that for $0\le i\le 2$,
\begin{equation}
\label{GeomEqn1}
\abs{\nabla^iA(x)}=\lam^{i+1}\abs{\nabla^i
A(\lam x)}\le\lam_1\abs{x}^{-i-1},
\end{equation}
where note that $\lam x\in\lam\Sigma$ is inside $\bar{B}_2\setminus B_{1/2}$.

Since $\Sigma_t=\sqrt{t}\ \Sigma$, we conclude that for $x\in\Sigma_t\setminus B_R$ and $0\le i\le 2$,
\begin{equation}
\label{GeomEqn2}
\abs{\nabla^iA(x)}=t^{\frac{-i-1}{2}}
\abs{\nabla^iA\left(\frac{x}{\sqrt{t}}\right)}\le\lam_1\abs{x}^{-i-1}.
\end{equation}
\end{proof}

Next, it follows from the assumption of Theorem \ref{UniqueThm} that, outside a compact set, $\Sigma_t$ is given by the graph of a smooth function over $C$. On the other hand, using the proof of Lemma 2.2 on page 30 of \cite{CM1}, we can write $\Sigma_t$ locally as the graph of a smooth function over a fixed hyperplane. 

\begin{lem}
\label{PertLem}
There exist $R_2>R_1$ and compact sets $K_t\subset\Sigma_t$, $0<t\le 1$, such that: $K_t\subset \Sigma_t\cap B_{2R_2}$, and $\Sigma_t\setminus K_t$ is given by the graph of a smooth function $U(\cdot,t):C\setminus \bar{B}_{R_2}\To\Real$. Moreover, there exist $0<\eps_0<1$ and $C_2>0$ such that for $z_0\in C\setminus B_{2R_2}$ and $t\in (0,1]$, the component of $\Sigma_t\cap B_{\eps_0\abs{z_0}}(z_0)$ containing $z_0+U(z_0,t)\textbf{n}(z_0)$ can be written as the graph of a smooth function $u(\cdot,t)$ over the tangent hyperplane $T_{z_0}C$ of $C$ at $z_0$ satisfying that, for $i=0,1,2$,
\begin{equation}
\label{PertEqn}
\abs{D^{i+1}u}\le C_2\abs{z_0}^{-i}\quad\text{and}\quad\abs{D^i\partial_t u}\le C_2\abs{z_0}^{-1-i}. 
\end{equation}
Here $D$ and $D^2$ are the Euclidean gradient and Hessian on $\Real^n$ respectively, and $\partial_t$ denotes the partial derivative with respect to $t$ fixing points in $T_{z_0}C$.
\end{lem}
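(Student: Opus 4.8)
The plan is to combine the two graphical descriptions of $\Sigma_t$ coming from the two pieces of the hypothesis: near infinity $\Sigma_t$ is a graph over the cone $C$ (from the assumption of Theorem \ref{UniqueThm}), while at unit scale a self-shrinker is locally a graph over a fixed hyperplane with curvature-controlled $C^{2,\alpha}$ bounds (from Lemma 2.2 of \cite{CM1}). First I would fix the dilation $\lambda = 1/\abs{z_0}$, which sends $z_0$ into the fixed annulus $\bar B_2\setminus B_{1/2}$ and sends $\Sigma_t$ to $\lambda\Sigma_t = \sqrt{t}\,\lambda\Sigma$. Because $\lambda\Sigma \to C$ locally smoothly, for $\abs{z_0}$ large (equivalently $\lambda$ small) this rescaled surface is, inside a fixed-size ball around $\lambda z_0$, graphical over $T_{\lambda z_0}(\lambda C)=T_{z_0}C$ with $C^k$ norm as close as we like to that of $C$ itself; in particular there is a universal $\eps_0$ and a uniform graphical radius that does \emph{not} shrink with $\abs{z_0}$, which is where the ball $B_{\eps_0\abs{z_0}}(z_0)$ in the statement comes from after scaling back. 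Call the rescaled graph function $\bar u(\cdot,t)$ over $T_{z_0}C$; then $u(y,t) = \lambda^{-1}\bar u(\lambda y, t)$ is the graph function over $T_{z_0}C$ for the original $\Sigma_t$, and $D^j u(y) = \lambda^{j-1} (D^j\bar u)(\lambda y)$, so every spatial derivative of order $j\ge 1$ picks up a factor $\lambda^{j-1}=\abs{z_0}^{1-j}$, i.e. $\abs{D^{i+1}u}\le C\,\abs{z_0}^{-i}$ once we know $\abs{D^{i+1}\bar u}$ is bounded by a universal constant.

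The universal bound on $\abs{D^j\bar u}$, $j=1,2,3$, is exactly what Lemma 2.2 of \cite{CM1} (or equivalently the standard interior estimates for the self-shrinker equation, fed the curvature bound $\abs{\nabla^i A}\le C_1\abs{x}^{-i-1}$ of Lemma \ref{GeomLem}) supplies: after the rescaling $\lambda\Sigma_t$ has $\abs{\nabla^i \bar A}\le C_1$ in the relevant annulus for $0\le i\le 2$, and combined with smooth convergence to the cone this forces a uniform $C^3$ graphical representation over the cone's tangent plane with constants depending only on $n,\Sigma,C$. For the time derivatives I would use that $\{\Sigma_t=\sqrt t\,\Sigma\}_{t\in(0,1]}$ solves the backward mean curvature flow $\partial_t x = H\mathbf n$ (equation \eqref{MCFEqn}); so, holding a point of $T_{z_0}C$ fixed, $\partial_t u$ equals the normal speed divided by $\la\mathbf n,\mathbf n_{T_{z_0}C}\ra$, which is $H/\cos\theta$. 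Since $\abs{H}\le\sqrt n\,\abs{A}\le C\abs{z_0}^{-1}$ on $\Sigma_t\setminus B_{R_1}$ and the graph is nearly flat so $\cos\theta$ is bounded below, we get $\abs{\partial_t u}\le C\abs{z_0}^{-1}$; differentiating the flow equation (or the self-shrinker/graph equation in $t$) and invoking the same rescaled interior estimates gives $\abs{D^i\partial_t u}\le C\abs{z_0}^{-1-i}$ for $i=1,2$. One has to be a little careful that "fixing a point in $T_{z_0}C$" and "fixing a point on $\Sigma$" differ by a tangential motion of size controlled by $\abs{Du}\cdot\abs{\partial_t(\text{foot point})}$, but $\abs{Du}\le C\abs{z_0}^{-1}$ makes that correction lower order, absorbed into the stated bounds.

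Finally, to produce the sets $K_t\subset\Sigma_t\cap B_{2R_2}$ and the global graph function $U(\cdot,t)$ on $C\setminus\bar B_{R_2}$, I would simply take $R_2$ large enough (depending on the convergence rate) that for every $z\in C\setminus\bar B_{R_2}$ the normal line $z+s\mathbf n(z)$ meets exactly one sheet of $\Sigma_t$ close to $z$ — this is again immediate from $\lambda\Sigma\to C$ locally smoothly, applied scale by scale, using that $\Sigma_t\setminus B_{R_2}$ is connected and embedded — and set $U(z,t)$ to be that value of $s$, with $K_t := \Sigma_t\setminus\{\text{graph of }U\}$, which is then contained in $\Sigma_t\cap B_{2R_2}$ and compact because $\Sigma_t$ is properly embedded. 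The main obstacle I anticipate is not any single estimate but keeping the geometry uniform in $t\in(0,1]$ as $t\to 0+$: a priori the graphical scale or the constants could degenerate, but the homogeneity $\Sigma_t=\sqrt t\,\Sigma$ together with $\abs{\nabla^i A}\le C_1\abs{x}^{-i-1}$ — a bound \emph{independent of $t$} — rescues this, since the dilation by $1/\abs{z_0}$ lands $\sqrt t\,\Sigma$ in the same model annulus regardless of $t$, and the curvature bound there is $t$-free. Making that uniformity explicit, and tracking the powers of $\abs{z_0}$ through two nested rescalings (once to unit scale for the estimates, once back), is the part that requires genuine care.
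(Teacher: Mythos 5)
Your proposal follows essentially the same route as the paper's proof: use the hypothesis $\lambda\Sigma\to C$ together with the homogeneity $\Sigma_t=\sqrt{t}\,\Sigma$ and a dilation to unit scale to get a $t$-uniform graphical representation over $C$ and, locally, over $T_{z_0}C$; feed the curvature bounds of Lemma \ref{GeomLem} into Lemma 2.2 of \cite{CM1} for the spatial estimate $\abs{D^{i+1}u}\le C\abs{z_0}^{-i}$; and obtain the time-derivative bounds by writing the backward mean curvature flow as the graphical quasilinear equation $-\partial_t u=\sqrt{1+\abs{Du}^2}\,\mathrm{div}\bigl(Du/\sqrt{1+\abs{Du}^2}\bigr)$ and differentiating it in space (the paper does exactly this, which is the cleaner form of your informal ``$\partial_t u=H/\cos\theta$, then differentiate''). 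The only thing worth tightening in your write-up is that controlling $D^2\partial_t u$ via this equation requires a uniform bound on $D^4 u$, i.e.\ the full range $0\le i\le 2$ of the $\nabla^i A$ bounds from Lemma \ref{GeomLem}, not merely a $C^3$ graph bound.
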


\begin{proof}
Given $\delta>0$, by the assumption of Theorem \ref{UniqueThm}, there exists $t_0=t_0(\delta)\in (0,R_1^{-2}]$ such that: if $0<t\le t_0$, then $\Sigma_t\cap B_3\setminus B_{1/3}$ can be written as the graph of a function $V(\cdot,t):\Omega_t\To\Real$ such that $\norm{V(\cdot,t)}_{C^1}\le\delta$. Here, the domain $\Omega_t$ of $C$ satisfies that $B_2\setminus B_{1/2}\subset\Omega_t\subset B_4\setminus B_{1/4}$ and $\Omega_t\To C\cap\left(B_3\setminus B_{1/3}\right)$ as $t\To 0$. Thus, we can choose $\delta$ sufficiently small, depending on $C$ inside the annulus $\bar{B}_4\setminus B_{1/4}$, such that for $z\in C\cap\left(B_2\setminus B_{1/2}\right)$, $x_t=z+V(z,t)\textbf{n}(z)\in\Sigma_t$ and $0<t\le t_0$, the distance $\text{dist}(x_t,C)$ from $x_t$ to $C$, which is achieved uniquely at $z$ and equal to $\abs{V(z,t)}$, is less than $1/100$, and $\la\textbf{n}(x_t),\textbf{n}(z)\ra>99/100$. Hence, if $x\in\Sigma_t$ with $\abs{x}^2>t_0^{-1}$ and $0<t\le 1$, then, by the homogeneity of $C$, $\text{dist}(x,C)<\abs{x}/100$, which is attained at a unique point $z$ on $C$, and $\la\textbf{n}(x),\textbf{n}(z)\ra>99/100$. This implies that the nearest point projection $\Pi_t:\Sigma_t\setminus B_{2/\sqrt{t_0}}\To C$ is well-defined for each $t\in (0,1]$ and $\abs{x}/2<\abs{\Pi_t(x)}<2\abs{x}$ for $x\in\Sigma_t\setminus B_{2/\sqrt{t_0}}$. Moreover, the map $\Pi_t$ is injective and the image of $\Pi_t$ contains $C\setminus \bar{B}_{4/\sqrt{t_0}}$. This proves the first part of Lemma \ref{PertLem} with $R_2=4/\sqrt{t_0}$ and $K_t=\Sigma_t\setminus\Pi_t^{-1}(C\setminus\bar{B}_{4/\sqrt{t_0}})$.

It follows from the discussion in the previous paragraph that, given $\delta>0$, there exists $r=r(\delta)>0$ such that for $z_0\in C\setminus B_r$ and $0<t\le 1$, $\abs{\Pi_t^{-1}(z_0)-z_0}<\delta\abs{z_0}$. Thus, by the proof of Lemma 2.2 in \cite{CM1} and Lemma \ref{GeomLem}, there exist $\delta_1\in (0,1)$ and $r_1,\lam_1>0$, depending only on $C_1$, such that: for $z_0\in C\setminus B_{r_1}$ and $t\in (0,1]$, the component of $\Sigma_t\cap B_{\delta_1\abs{z_0}}(z_0)$ containing $\Pi_t^{-1}(z_0)$ can be written as the graph of a smooth function $u(\cdot,t)$ over the tangent hyperplane of $C$ at $z_0$ and $u(\cdot,t)$ satisfies that $\abs{Du(\cdot,t)}\le\lam_1$ and $\abs{D^2u(\cdot,t)}\le\lam_1\abs{z_0}^{-1}$. In fact, Lemma \ref{GeomLem} implies that $\abs{D^3u(\cdot,t)}\le\lam_2\abs{z_0}^{-2}$ and $\abs{D^4u(\cdot,t)}\le\lam_2\abs{z_0}^{-3}$, where $\lam_2>0$ depends only on $C_1$ and $\lam_1$. Moreover, since $\Sigma_t$ varies smoothly with respect to $t$, the function $U$ in this lemma depends on $t$ smoothly and so does $u$. In the following, we parametrize $T_{z_0}C$ by $F:\Real^n\To T_{z_0}C$, $F(p)=z_0+\sum_ip_ie_i$, where $p=(p_1,\dots,p_n)$ and $\{e_1,\dots,e_n\}$ is an orthonormal basis of $T_{z_0}C-z_0$. And we identify $u(p,t)$ with $u(F(p),t)$. Note that $\partial_t$ means the partial derivative with respect to $t$ fixing $p$. Since $\{\Sigma_t\}_{t\in (0,1]}$ moves by mean curvature backward, we derive the differential equation for $u$:
\begin{equation}
\label{GraphMCFEqn}
-\partial_t u=\sqrt{1+\abs{Du}^2}\ \text{div}\left(\frac{Du}{\sqrt{1+\abs{Du}^2}}\right).
\end{equation}
Thus, $\abs{\partial_t u}\le\lam_3\abs{z_0}^{-1}$, where $\lam_3>0$ depends only on $\lam_1$. Next, differentiating once with respect to the $i$th coordinate on both sides of the equation (\ref{GraphMCFEqn}) gives
\begin{equation*}
\label{GraphMCFEqn1}
-\partial_i\partial_t u=\frac{\sum_k\partial_k u\partial^2_{ik} u}{\sqrt{1+\abs{Du}^2}}\ \text{div}\left(\frac{Du}{\sqrt{1+\abs{Du}^2}}\right)
+\sqrt{1+\abs{Du}^2}\ \partial_i\text{div}\left(\frac{Du}{\sqrt{1+\abs{Du}^2}}\right).
\end{equation*}
Note that the divergence term on the right hand side of the equation (\ref{GraphMCFEqn}) is the mean curvature of $\Sigma_t$ at $\text{Graph}(u)$. Thus, Lemma \ref{GeomLem} implies that
\begin{equation}
\label{PertEqn1}
\abs{\partial_i\text{div}\left(\frac{Du}{\sqrt{1+\abs{Du}^2}}\right)}
\le\lam_4\abs{z_0}^{-2},
\end{equation}
where $\lam_4>0$ depends only on $\lam_1$ and $C_1$. Hence, there exists $\lam_5>0$, depending only on $\lam_1$, $\lam_2$ and $\lam_4$, such that $\abs{\partial_i\partial_t u}\le\lam_5\abs{z_0}^{-2}$. Similarly, differentiating twice with respect to the $i$th and $j$th coordinates on the both sides of the equation (\ref{GraphMCFEqn}), we get
\begin{equation*}
\label{GraphMCFEqn2}
\begin{split}
-\partial^2_{ij}\partial_t u
=&\left(\frac{\sum_k\partial^2_{ik}u\partial^2_{jk}u+\partial_k u\partial^3_{ijk}u}{\sqrt{1+\abs{Du}^2}}-\frac{\sum_{k,l}\partial_k u\partial_l u\partial^2_{ik}u\partial^2_{jl}u}{(1+\abs{Du}^2)^{3/2}}\right)
\text{div}\left(\frac{Du}{\sqrt{1+\abs{Du}^2}}\right)\\
&+\frac{\sum_k\partial_k u\partial^2_{ik}u}{\sqrt{1+\abs{Du}^2}}
\ \partial_j\text{div}\left(\frac{Du}{\sqrt{1+\abs{Du}^2}}\right)+\frac{\sum_k\partial_k u\partial^2_{jk}u}{\sqrt{1+\abs{Du}^2}}
\ \partial_i\text{div}\left(\frac{Du}{\sqrt{1+\abs{Du}^2}}\right)\\
&+\sqrt{1+\abs{Du}^2}\ \partial^2_{ij}\text{div}\left(\frac{Du}{\sqrt{1+\abs{Du}^2}}\right).
\end{split}
\end{equation*}
Note that, by Lemma \ref{GeomLem}, we have that
\begin{equation}
\label{PertEqn2}
\abs{\partial^2_{ij}\text{div}\left(\frac{Du}{\sqrt{1+\abs{Du}^2}}\right)}\le \lam_6\abs{z_0}^{-3},
\end{equation}
where $\lam_6>0$ depends on $\lam_1$ and $\lam_2$.
Hence, there exists $\lam_7>0$, depending only on $\lam_1$, $\lam_2$, $\lam_4$ and $\lam_6$, such that $\abs{\partial^2_{ij}\partial_t u}\le\lam_7\abs{z_0}^{-3}$.
\end{proof}

Thus, by Lemmas \ref{GeomLem} and \ref{PertLem}, we can write $\tilde{\Sigma}$ outside a compact set as the graph of a function over $\Sigma$. Namely,

\begin{lem}
\label{GraphLem}
There exist $R_3>R_2$ and $C_3>0$ such that, outside a compact set,  $\tilde{\Sigma}$ is given by the graph of a smooth function $v:\Sigma\setminus \bar{B}_{R_3}\To\Real$ satisfying that for $x\in\Sigma\setminus \bar{B}_{R_3}$ and $0\le i\le 2$,
\begin{equation}
 \label{GraphEqn}
\abs{\nabla^i_\Sigma v(x)}\le C_3\abs{x}^{-1-i}.
\end{equation}
\end{lem}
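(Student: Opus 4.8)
The plan is to combine the two graph representations already established: $\Sigma_t$ over the cone $C$ (from Lemma \ref{PertLem}) and $\tilde\Sigma_t$ over the cone $C$ (the same statement applied to $\tilde\Sigma$), and then to compose these to write $\tilde\Sigma$ as a graph over $\Sigma$. Since both $\Sigma$ and $\tilde\Sigma$ are asymptotic to the \emph{same} cone $C$, outside a large ball both $\Sigma\setminus\bar B_{R_2}$ and $\tilde\Sigma\setminus\bar B_{R_2}$ are normal graphs over $C\setminus\bar B_{R_2}$ of functions $U$ and $\tilde U$ respectively (apply Lemma \ref{PertLem} with $t=1$; alternatively note that the $t$-dependence is scaling, so it suffices to work at $t=1$). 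The first step is to record the decay of $U$ and $\tilde U$ themselves, not just their derivatives: because $\lambda\Sigma\to C$ smoothly, for each fixed annular scale the graph function over $C$ at scale $\lambda$ tends to zero in $C^1$, and rescaling back shows $\abs{U(z)}\le o(\abs z)$ — but we need the stronger quantitative bound $\abs{U(z)}\le C\abs z^{-1}$, which should follow from interpolating the decay $\abs{D^2 U}\le C\abs z^{-1}$ (equivalently $\abs{D^2 u}\le C\abs{z_0}^{-1}$ in the local charts of Lemma \ref{PertLem}) against the fact that $\Sigma$ is a self-shrinker: indeed on a self-shrinker $H=\tfrac12\la x,\mathbf n\ra$, and since the tangent cone to $\Sigma$ at infinity is $C$ on which $\la x,\mathbf n\ra\equiv 0$, one gets $\abs H=\abs{\la x,\mathbf n\ra}/2\to 0$, and combined with $\abs{\nabla^i A}\le C_1\abs x^{-i-1}$ from Lemma \ref{GeomLem} this forces $U$ (the normal deviation from $C$) to decay like $\abs x^{-1}$. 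I would spell this out by noting that the distance $\la x,\mathbf n\ra$ on $\Sigma$ equals, to leading order, the normal deviation $U$ from the cone plus error terms controlled by $\abs{D U}$ and the geometry, so the self-shrinker equation reads roughly $\Delta_\Sigma U - \tfrac12\la x,\nabla_\Sigma U\ra + (\text{error}) = \tfrac12 U \abs x \cdot(\text{bounded})$ type relation whose elliptic estimates, together with the $o(\abs x)$ a priori bound, give $\abs U\le C\abs x^{-1}$; the same for $\tilde U$.

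Once $\abs U,\abs{\tilde U}\le C\abs x^{-1}$ with the derivative bounds of Lemma \ref{PertLem}, the second step is purely a change-of-graph computation. Fix $x\in\Sigma\setminus\bar B_{R_3}$ for $R_3$ large; let $z=\Pi(x)\in C$ be its nearest-point projection, so $x=z+U(z)\mathbf n(z)$. The point $y\in\tilde\Sigma$ with the same $C$-projection is $y=z+\tilde U(z)\mathbf n(z)$, so $y-x=(\tilde U(z)-U(z))\mathbf n(z)$. I would then project the vector $y-x$ onto the normal direction $\mathbf n_\Sigma(x)$ of $\Sigma$ at $x$ and define $v(x)$ so that $x+v(x)\mathbf n_\Sigma(x)$ is the point of $\tilde\Sigma$ nearest to $x$ along $\mathbf n_\Sigma(x)$; this is legitimate because $\mathbf n_\Sigma(x)$ and $\mathbf n(z)$ differ by an angle controlled by $\abs{DU}\le C\abs{z}^{-1}\le C R_3^{-1}$, which is small, and similarly $\tilde\Sigma$ near $y$ is a graph over $T_z C$ with small gradient, so the normal line through $x$ meets $\tilde\Sigma$ transversally in a unique nearby point. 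The resulting $v(x)$ satisfies $\abs{v(x)}\le \abs{\tilde U(z)-U(z)}\cdot(1+\text{small}) + (\text{angle error})\cdot(\dots) \le C\abs x^{-1}$, using $\abs z\ge\abs x/2$ from Lemma \ref{PertLem}. For the derivative bounds, I would differentiate the defining relation $x+v(x)\mathbf n_\Sigma(x)\in\tilde\Sigma$ (i.e. the implicit equation that this point lies on the graph of $\tilde U$ over $C$), using the chain rule together with $\abs{\nabla^i A_\Sigma}\le C_1\abs x^{-i-1}$ (Lemma \ref{GeomLem}) to control derivatives of $\mathbf n_\Sigma$, and the bounds $\abs{D^{i+1}U},\abs{D^{i+1}\tilde U}\le C\abs x^{-i}$ from Lemma \ref{PertLem}; each covariant derivative costs one power of $\abs x^{-1}$, yielding $\abs{\nabla^i_\Sigma v(x)}\le C_3\abs x^{-1-i}$ for $i=0,1,2$. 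Smoothness of $v$ follows from the implicit function theorem applied in local charts, since all the data ($\Sigma$, $\tilde\Sigma$, the projections) are smooth away from the vertex.

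The main obstacle I anticipate is the first step: upgrading the \emph{qualitative} smooth convergence $\lambda\Sigma\to C$ (which a priori only gives $\abs U=o(\abs x)$ after rescaling) to the \emph{quantitative rate} $\abs U\le C\abs x^{-1}$. The derivative bounds in Lemma \ref{PertLem} are on $D^{i+1}u$ for $i\ge 0$, i.e. they start at the gradient, so they do not by themselves pin down the size of $u$ or of its gradient — only the Hessian and higher. Closing this gap is exactly where the self-shrinker equation must be used: one cannot get $\abs x^{-1}$ decay for an arbitrary asymptotically conical hypersurface (a graph like $\abs x^{1/2}$ over the cone is asymptotically conical in the $C^1_{loc}$-rescaled sense), but a self-shrinker asymptotic to a cone is much more rigid. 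Concretely I would set $\phi = \la x,\mathbf n_\Sigma\ra = 2H$ on $\Sigma$; by Lemma \ref{GeomLem}, $\abs\phi = 2\abs H\le 2C_1\abs x^{-1}$ immediately (since $H$ is a component of $A$, bounded by $\abs A\le C_1\abs x^{-1}$). Then I would show that $\abs{\la x,\mathbf n_\Sigma(x)\ra}$ and the normal deviation $\abs{U(\Pi(x))}$ are comparable up to lower-order terms: writing $x=z+U(z)\mathbf n(z)$ and using that $\la z,\mathbf n(z)\ra=0$ on the cone, a direct expansion gives $\la x,\mathbf n_\Sigma(x)\ra = U(z)\,\la\mathbf n(z),\mathbf n_\Sigma(x)\ra + \la z,\mathbf n_\Sigma(x)-\mathbf n(z)\ra$, and since $\mathbf n_\Sigma(x)-\mathbf n(z) = O(\abs{DU}) = O(\abs z^{-1}\abs U)$ hmm — this is circular unless one already controls $\abs{DU}$. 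The clean way out is to run an ODE/integration-along-rays argument: parametrize $C$ by rays $z=s\omega$, $\omega\in\Gamma$, and show the self-shrinker equation restricted to the graph forces a differential inequality for $s\mapsto U(s\omega)$ of the form $\abs{\partial_s(s^{-1}U)}\le C s^{-3}$ after using $\abs H\le C s^{-1}$ and the interior Hessian bound to absorb the nonlinear terms, and then integrate from $s$ to $\infty$ using $U\to 0$ (which does follow from the qualitative convergence, namely $s^{-1}U(s\omega)\to 0$ as $s\to\infty$) to conclude $\abs{s^{-1}U}\le C s^{-2}$, i.e. $\abs U\le C s^{-1}$. This ray-integration, carefully done with the error terms controlled by Lemmas \ref{GeomLem} and \ref{PertLem}, is the technical heart; everything after it is bookkeeping with the implicit function theorem.
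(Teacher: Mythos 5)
Your proposal is sound, and the second (change-of-graph) step matches the paper's computation closely. The interesting divergence is in the first step, where you identify the need to upgrade the qualitative convergence to a quantitative rate $\abs{U},\abs{\tilde{U}}\lesssim\abs{z}^{-1}$ and flag it as the technical heart.

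The paper obtains this decay by a cleaner route that you have partly overlooked: Lemma \ref{PertLem} provides not just the spatial bounds $\abs{D^{i+1}u}\le C_2\abs{z_0}^{-i}$ (which, as you note, do not pin down $\abs u$ itself) but also the \emph{time-derivative} bounds $\abs{D^i\partial_t u}\le C_2\abs{z_0}^{-1-i}$, and these are derived from the backward MCF equation $-\partial_t u = \sqrt{1+\abs{Du}^2}\,\mathrm{div}(Du/\sqrt{1+\abs{Du}^2})$ together with $\abs H\lesssim\abs{z_0}^{-1}$ from Lemma \ref{GeomLem}. Since $\Sigma_0=C$, i.e. $u(0,0)=0$ at each base point $z_0$, integrating $\abs{\partial_t u(0,t)}\le C_2\abs{z_0}^{-1}$ over $t\in[0,1]$ gives $\abs{U(z_0,1)}\le C_2\abs{z_0}^{-1}$ at one stroke, and similarly for $\tilde U$; this is the fact the paper uses, mostly silently, in the chain $\mathrm{dist}(y,\Sigma)\le\abs{y-z}+\abs{z-\Pi^{-1}(z)}\le 2(C_2+\tilde{C}_2)\abs{y}^{-1}$.

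Your ray-integration version is actually equivalent to this by the scaling relation $\Sigma_t=\sqrt{t}\,\Sigma$: the time-derivative of the rescaled graph function at $t=1$ is precisely (a multiple of) $U - s\partial_s U = -s^2\partial_s(s^{-1}U)$. Indeed, at the projection point $z$, writing $x=z+U\mathbf n(z)$ one has the \emph{exact} identity $\la x,\mathbf n_\Sigma(x)\ra=(U-s\partial_s u)/\sqrt{1+\abs{Du}^2}$, since $\la z,\mathbf n(z)\ra=0$ on the cone and $z$ points in the radial tangent direction. Combined with the self-shrinker equation and $\abs H\le C_1\abs x^{-1}$, this gives $\abs{\partial_s(s^{-1}U)}\le C s^{-3}$ directly, with no ``nonlinear terms to absorb'' and no need for a Hessian bound; integrating to $\infty$ using $s^{-1}U\to 0$ then closes the gap. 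So your approach is correct and, cleaned up, is arguably more transparent than the paper's about where the self-shrinker rigidity enters; the paper buys concision by packaging the same estimate inside the $\partial_t u$ bound of Lemma \ref{PertLem}. Also note that the circularity you worried about (controlling $\mathbf n_\Sigma(x)-\mathbf n(z)$ by $\abs{DU}$) is spurious here: all you need is $\abs{Du}$ bounded (or merely $o(1)$), not decaying, since the factor $(1+\abs{Du}^2)^{-1/2}$ only needs to be bounded away from zero.
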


\begin{proof}
First, using the same argument in Lemma \ref{PertLem}, there exist $\tilde{R}_2>R_1$ and compact sets $\tilde{K}_t\subset\tilde{\Sigma}_t$, $0<t\le 1$, such that: $\tilde{K}_t\subset B_{2\tilde{R}_2}$, and $\tilde{\Sigma}_t\setminus \tilde{K}_t$ is given by the graph of a smooth function $\tilde{U}(\cdot,t):C\setminus \bar{B}_{\tilde{R}_2}\To\Real$. Moreover, there exist $0<\tilde{\eps}_0<1$ and $\tilde{C}_2>0$ such that for $z_0\in C\setminus B_{2\tilde{R}_2}$ and $t\in (0,1]$, the component of $\tilde{\Sigma}_t\cap B_{\tilde{\eps}_0\abs{z_0}}(z_0)$ containing $z_0+\tilde{U}(z_0,t)\textbf{n}(z_0)$ can be written as the graph of a smooth function $\tilde{u}(\cdot,t)$ over the tangent hyperplane of $C$ at $z_0$ satisfying that, for $i=0,1,2$,
\begin{equation}
\label{GraphEqn1}
\abs{D^{i+1}\tilde{u}}\le \tilde{C}_2\abs{z_0}^{-i}\quad\text{and}\quad\abs{D^i\partial_t\tilde{u}}\le \tilde{C}_2\abs{z_0}^{-1-i}. 
\end{equation}
For $r_1>2\max\{R_2,\tilde{R}_2,\sqrt{C_2},\sqrt{\tilde{C}_2} \}$, let $\Pi:\Sigma\setminus B_{r_1}\To C$ and $\tilde{\Pi}:\tilde{\Sigma}\setminus B_{r_1}\To C$ be the nearest point projections. Thus, if $y\in\tilde{\Sigma}\setminus B_{4r_1}$, then $z=\tilde{\Pi}(y)\in C\setminus B_{2r_1}$ and $\text{dist}(y,\Sigma)\le\abs{y-z}+\abs{z-\Pi^{-1}(z)}\le 2(C_2+\tilde{C}_2)\abs{y}^{-1}$. Hence, by Lemma \ref{GeomLem}, we can choose $r_1\gg 1$, depending only on $C_1$, $C_2$ and $\tilde{C}_2$, such that for $y\in\tilde{\Sigma}\setminus B_{4r_1}$, there exists a unique $x\in\Sigma$ satisfying that $\abs{y-x}=\text{dist}(y,\Sigma)$.

Next, for $x\in\Sigma\setminus B_{2r_1}$, Lemma \ref{PertLem} implies that $z'=\Pi(x)\in C\setminus B_{r_1}$, $\abs{x-z'}\le C_2\abs{z'}^{-1}$ and $\la\textbf{n}(x),\textbf{n}(z')\ra>1-\mu_1\abs{z'}^{-4}$, where $\mu_1>0$ depends on $C_2$. Thus, if $r_1$ is sufficiently large, depending only on $C_2$ and $\tilde{C}_2$, then the component of $\tilde{\Sigma}\cap B_{\tilde{\eps}_0\abs{z'}/2}(z')$ containing $\tilde{\Pi}^{-1}(z')$ can be written as the graph of the function $f$ over the tangent hyperplane of $\Sigma$ at $x$. And $\abs{f(x)}\le\mu_2\abs{z'}^{-1}$ for some $\mu_2$, depending on $C_2$ and $\tilde{C}_2$. Hence, there exists $r_2>2r_1$ large, depending only on $\mu_2$ and $C_1$, such that the image of the nearest point projection from $\tilde{\Sigma}\setminus B_{4r_1}\To\Sigma$ contains $\Sigma\setminus B_{r_2}$.

Finally, it is easy to show that there exists $\delta_1>0$, depending only on $C$, such that: if $z_1,z_2\in C\setminus B_{r_1}$ and $\abs{z_1-z_2}<\delta_1$, then the geodesic distance between $z_1$ and $z_2$ on $C$, $\text{dist}_C(z_1,z_2)$, is less than $2\abs{z_1-z_2}$. Note that for $y\in\tilde{\Sigma}\setminus B_{2r_1}$, $\tilde{\Pi}(y)\in C\setminus B_{r_1}$, $|\tilde{U}(\tilde{\Pi}(y),1)|\le \tilde{C}_2/|\tilde{\Pi}(y)|$ and $|\nabla_C\tilde{U}(\cdot,1)|$ is uniformly small on $C\setminus B_{r_1}$. Thus, there exist $\delta_2>0$, $r_3\gg 1$ and $\mu_3>1$, depending only on $\tilde{C}_2$, $\delta_1$ and $r_1$, such that: if $y_1$, $y_2\in\tilde{\Sigma}\setminus B_{r_3}$ and $\abs{y_1-y_2}\le\delta_2$, then the geodesic distance $\text{dist}_{\tilde{\Sigma}}(y_1,y_2)\le\mu_3\abs{y_1-y_2}$.  
Hence, by the discussion in the previous paragraph, the nearest point projection from $\tilde{\Sigma}\setminus B_{r_4}$ to $\Sigma$ is injective, for some large $r_4>4\max\{r_1,r_3\}$, depending on $C_2$ and $\tilde{C}_2$. Therefore, choosing $r_5=2\max\{r_2,r_4\}$, we conclude that, outside a compact set, $\tilde{\Sigma}$ is given by the graph of a smooth function $v$ over $\Sigma\setminus\bar{B}_{r_5}$. 

In the following, we derive the bounds of $v$ and its derivatives up to the second order. Fix $x$ in $\Sigma\setminus\bar{B}_{r_5}$. It is clear that $\abs{v(x)}\le\lam_1\abs{x}^{-1}$, where $\lam_1>0$ depends only on $C_2$ and $\tilde{C}_2$. Let $z'=\Pi(x)$ and $y=x+v(x)\textbf{n}(x)$. Thus, by Lemma \ref{PertLem}, $\abs{x-z'}$ and $|y-\tilde{\Pi}^{-1}(z')|$ are bounded by multiples of $1/\abs{x}$. Hence, the geodesic distance $\text{dist}_{\tilde{\Sigma}}(y,\tilde{\Pi}^{-1}(z'))$ is also a multiple of $1/\abs{x}$. Therefore, we can write the component of $\Sigma\cap B_{\eps_0\abs{z}}(z')$ containing $x$ and the component of $\tilde{\Sigma}\cap B_{\tilde{\eps}_0\abs{z}}(z')$ containing $y$ as the graphs of smooth functions $u_1=u(\cdot,1)$ and $\tilde{u}_1=\tilde{u}(\cdot,1)$ over the tangent hyperplane $T_{z'}C$ of $C$ at $z'$ respectively. We parametrize $T_{z'}C$ by $F:\Real^n\To T_{z'}C$, $F(p)=z'+\sum_ip_ie_i$, where $p=(p_1,\dots,p_n)$ and $\{e_1,\dots,e_n\}$ is an orthonormal basis of $T_{z'}C-z'$. For $p,q\in\Real^n$, we identify $u_1(p)$, $\tilde{u}_1(q)$ and $v(p)$ with $u_1(F(p))$, $\tilde{u}_1(F(q))$ and $v(F(p))$ respectively. Thus, for $h=1,2\dots,n$,
\begin{align}
\label{GraphEqn2.1}
q_h & = p_h-\frac{\partial_hu_1}{\sqrt{1+\abs{Du_1}^2}}\ v,\\
\label{GraphEqn2.3}
\tilde{u}_1(q) & = u_1(p)+\frac{1}{\sqrt{1+\abs{Du_1}^2}}\ v.
\end{align}
Differentiating the equations (\ref{GraphEqn2.1}) and (\ref{GraphEqn2.3}) with respect to $p_i$ gives
\begin{equation}
\label{GraphEqn3.1}
\begin{split}
\partial_i q_h
= &\delta_{hi}-\frac{\partial^2_{hi}u_1}{\sqrt{1+\abs{Du_1}^2}}\  v+\frac{\partial_hu_1}{(1+\abs{Du_1}^2)^{3/2}}\left( \sum_l\partial_lu_1\partial^2_{li}u_1\right) v\\
&-\frac{\partial_h u_1}{\sqrt{1+\abs{Du_1}^2}}\ \partial_iv,\\
\end{split}
\end{equation}
\begin{equation}
\label{GraphEqn3.3}
\begin{split}
\sum_k\partial_k\tilde{u}_1\partial_i q_k
=&\partial_i u_1+\frac{\partial_i v}{\sqrt{1+\abs{Du_1}^2}}-\frac{v}{(1+\abs{Du_1}^2)^{3/2}}\ \sum_l\partial_lu_1\partial^2_{li}u_1.
\end{split}
\end{equation}
Furthermore, plugging the equation (\ref{GraphEqn3.1}) into the equation (\ref{GraphEqn3.3}) gives
\begin{equation}
\label{GraphEqn4}
\begin{split}
\frac{1+\sum_k\partial_k\tilde{u}_1\partial_ku_1}{\sqrt{1+\abs{Du_1}^2}}\  \partial_iv
=&(\partial_i\tilde{u}_1-\partial_iu_1)-\frac{\sum_k
\partial_k\tilde{u}_1\partial^2_{ki}u_1}{\sqrt{1+\abs{Du_1}^2}}\ v\\
&+\frac{1+\sum_k\partial_k\tilde{u}_1\partial_ku_1}{(1+\abs{Du_1}^2)^{3/2}}
\left(\sum_l\partial_lu_1\partial^2_{li}u_1\right)v.
\end{split}
\end{equation} 
Note that $\abs{q(0)}\le \lam_1\abs{x}^{-1}$ and $\abs{x}$, $\abs{y}$ and $\abs{z'}$ are comparable. Thus, by the assumption of Theorem \ref{UniqueThm}, (\ref{PertEqn}) and (\ref{GraphEqn1}), there exists $\lam_2>0$, depending only on $\lam_1$, $C_2$ and $\tilde{C}_2$ such that
\begin{equation}
\label{GraphEqn5}
\begin{split}
&\abs{\partial_i\tilde{u}_1(q(0))-\partial_i u_1(0)}\\
\le&\abs{\partial_i\tilde{u}_1(q(0))-\partial_i \tilde{u}_1(0)}
+\abs{\partial_i \tilde{u}_1(0)-\partial_i u_1(0)}\\
\le &\lam_2\abs{x}^{-2}.
\end{split}
\end{equation}
Hence, it follows from the upper bound of $v$, (\ref{PertEqn}), (\ref{GraphEqn1}) and (\ref{GraphEqn5}) that $\abs{\partial_iv(0)}\le \lam_3\abs{x}^{-2}$ for some $\lam_3>0$ depending only on $\lam_2$, $C_2$ and $\tilde{C}_2$ and thus the same holds true for $\abs{\nabla_\Sigma v}$ at $x$.

Next, differentiating the equations (\ref{GraphEqn3.1}) and (\ref{GraphEqn3.3}) with respect to $p_j$ gives that
\begin{equation}
\label{GraphEqn6.1}
\begin{split}
\partial^2_{ij} q_h= &-\frac{\partial^3_{hij} u_1}{\sqrt{1+\abs{Du_1}^2}}\  v+\frac{\partial^2_{hi}u_1}{(1+\abs{Du_1}^2)^{3/2}}\left(\sum_l \partial_lu_1\partial^2_{lj}u_1\right)v\\
&-\frac{3\partial_hu_1}{(1+\abs{Du_1}^2)^{5/2}}\left(\sum_l \partial_lu_1\partial^2_{li}u_1\right)\left(\sum_m \partial_m u_1\partial^2_{mj}u_1\right)v\\
&+\frac{\partial_hu_1}{(1+\abs{Du_1}^2)^{3/2}}\left(\sum_l\partial^2_{li}u_1
\partial^2_{lj}u_1+\partial_lu_1\partial^3_{lij}u_1\right)v\\
&+\frac{\partial^2_{hj}u_1}{(1+\abs{Du_1}^2)^{3/2}}\left(\sum_l \partial_lu_1\partial^2_{li}u_1\right)v
-\frac{\partial^2_{hi}u_1\partial_j v+\partial^2_{hj}u_1\partial_i v}{\sqrt{1+\abs{Du_1}^2}}\\
&+\frac{\partial_hu_1}{(1+\abs{Du_1}^2)^{3/2}}\sum_l \partial_lu_1(\partial^2_{li}u_1\partial_j v+\partial^2_{li}u_1\partial_i v)\\
&-\frac{\partial_hu_1}{\sqrt{1+\abs{Du_1}^2}}\ \partial^2_{ij} v,
\end{split}
\end{equation}
\begin{equation}
\label{GraphEqn6.3}
\begin{split}
&\sum_{k,n}\left(\partial^2_{kn}\tilde{u}_1\partial_i q_k \partial_j q_n+\partial_k\tilde{u}_1 \partial^2_{ij} q_k\right)\\
=& \partial^2_{ij}u_1+\frac{3}{(1+\abs{Du_1}^2)^{5/2}}\left(\sum_l\partial_lu_1
\partial^2_{li}u_1\right)\left(\sum_m\partial_m u_1\partial^2_{mj}u_1\right)v\\
&-\frac{1}
{(1+\abs{Du_1}^2)^{3/2}}\left(\sum_l\partial^2_{li}u_1\partial^2_{lj}u_1
+\partial_lu_1\partial^3_{lij}u_1\right)v\\
& -\frac{\sum_l\partial_lu_1\partial^2_{lj}u_1}{(1+\abs{Du_1}^2)^{3/2}}\ \partial_i v
-\frac{\sum_l\partial_lu_1\partial^2_{li}u_1}
{(1+\abs{Du_1}^2)^{3/2}}\ \partial_j v
+\frac{\partial^2_{ij} v}{\sqrt{1+\abs{Du_1}^2}}.
\end{split}
\end{equation}
It follows from Lemma \ref{PertLem} and the bounds for $\abs{v(0)}$ and $\abs{Dv(0)}$  that, for $h=1,2,\dots,n$,
\begin{equation}
\label{GraphEqn7}
\abs{\partial^2_{ij} q_h(0)+\frac{\partial_hu_1(0)}{\sqrt{1+\abs{Du_1(0)}^2}}\  \partial^2_{ij} v(0)}\le\lam_4\abs{x}^{-3},
\end{equation}
where $\lam_4>0$ depends only on $C_2$, $\lam_1$ and $\lam_3$. Thus, by the assumption of Theorem \ref{UniqueThm}, (\ref{PertEqn}), (\ref{GraphEqn1}) and $\abs{q(0)}=\lam_1\abs{x}^{-1}$, we estimate
\begin{equation}
\label{GraphEqn8}
\begin{split}
&\abs{\partial^2_{ij}\tilde{u}_1(q(0))-\partial^2_{ij}u_1(0)}\\
\le&\abs{\partial^2_{ij}\tilde{u}_1(q(0))-\partial^2_{ij}\tilde{u}_1(0)}
+\abs{\partial^2_{ij}\tilde{u}_1(0)-\partial^2_{ij}u_1(0)}\\
\le & \lam_5\abs{x}^{-3}. 
\end{split}
\end{equation}
Here $\lam_5>0$ depends only on $C_2$, $\tilde{C}_2$ and $\lam_1$.
Since $\abs{D\tilde{u}_1(q(0))}\le\tilde{C}_2$, $\abs{D^2\tilde{u}_1(q(0))}\le \tilde{C}_2\abs{x}^{-1}$ and there exists $\lam_6>0$, depending only on $C_2$, $\lam_1$ and $\lam_3$, such that
\begin{equation}
\label{GraphEqn9}
\abs{\partial_i q_h(0)-\delta_{ih}}\le\lam_6\abs{x}^{-2},
\end{equation}
simplifying the equation (\ref{GraphEqn6.3}) gives that
\begin{equation}
\label{GraphEqn10}
\abs{\frac{1+\sum_k\partial_ku_1(0)\partial_k\tilde{u}_1(0)}{\sqrt{1+\abs{Du_1(0)}^2}}\  \partial^2_{ij}v(0)-\left(\partial^2_{ij}\tilde{u}_1(0)-\partial^2_{ij}u_1(0)\right)}\le\lam_7\abs{x}^{-3}.
\end{equation}
Here $\lam_7>0$ depends only on $C_2$, $\tilde{C}_2$, $\lam_1$ and $\lam_3$. Therefore, it follows from (\ref{GraphEqn5}), (\ref{GraphEqn8}), (\ref{GraphEqn10}) and Lemma \ref{PertLem} that $\abs{\nabla^2_\Sigma v(x)}\le\lam_8\abs{x}^{-3}$, where $\lam_8>0$ depends on $C_2$, $\lam_2$, $\lam_5$ and $\lam_7$.
\end{proof}

Finally, we conclude this section by deriving the differential equation for $v$ from the definition of self-shrinkers. Note that the operator $L_\Sigma$ in Lemma \ref{JacobiLem} is a small perturbation of the stability operator introduced in \cite{CM2}. 

\begin{lem}
\label{JacobiLem}
There exists $C_4>0$ such that at $x\in\Sigma\setminus B_{R_3}$,
\begin{equation}
 \label{JacobiEqn}
L_\Sigma v=\Delta_{\Sigma}v-\frac{1}{2}\la x,\nabla_\Sigma v\ra+\left(\abs{A}^2+\frac{1}{2}\right)v+Q\left(x,v,
\nabla_\Sigma v,\nabla^2_\Sigma v\right)=0,
\end{equation}
where the function $Q$ satisfies that
\begin{equation}
 \label{QuadEqn}
\abs{Q\left(x,v,\nabla_\Sigma v,\nabla^2_\Sigma v\right)}\le C_4 \abs{x}^{-2}\left(\abs{v}+\abs{\nabla_\Sigma v}\right).
\end{equation}
\end{lem}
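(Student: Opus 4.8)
The plan is to derive \eqref{JacobiEqn} directly from the self-shrinker equation \eqref{SelfshrinkerEqn} by expressing the mean curvature and position vector of $\tilde\Sigma$ in terms of the graph function $v$ over $\Sigma$, and then using that $\Sigma$ itself satisfies \eqref{SelfshrinkerEqn} to cancel the leading terms. Concretely, let $x\in\Sigma\setminus B_{R_3}$ and write the corresponding point on $\tilde\Sigma$ as $y=x+v(x)\mathbf{n}_\Sigma(x)$ (suitably projected, but since $v$ is small the graph is well-defined as in Lemma \ref{GraphLem}). Both $H_{\tilde\Sigma}(y)$ and $\la y,\mathbf{n}_{\tilde\Sigma}(y)\ra$ are smooth functions of $x$, $v(x)$, $\nabla_\Sigma v(x)$, $\nabla^2_\Sigma v(x)$ and the geometry of $\Sigma$ at $x$ (the metric, $\mathbf{n}_\Sigma$, and $A$). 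I would Taylor-expand the quantity
\begin{equation*}
\Phi(x,v,\nabla_\Sigma v,\nabla^2_\Sigma v):=H_{\tilde\Sigma}(y)-\tfrac12\la y,\mathbf{n}_{\tilde\Sigma}(y)\ra
\end{equation*}
in $(v,\nabla_\Sigma v,\nabla^2_\Sigma v)$ about $0$. The zeroth-order term is $H_\Sigma(x)-\tfrac12\la x,\mathbf{n}_\Sigma(x)\ra=0$ since $\Sigma$ is a self-shrinker, and the first-order (linearized) term is precisely the Jacobi/stability operator of \cite{CM2}: the linearization of $H$ at a normal variation $v\mathbf{n}$ is $\Delta_\Sigma v+|A|^2 v$, while the linearization of $-\tfrac12\la x,\mathbf{n}\ra$ gives $-\tfrac12\la x,\nabla_\Sigma v\ra+\tfrac12 v$ (using $\nabla_\Sigma(v\mathbf{n})$ in the tangential/normal decomposition and $\la x,\mathbf{n}\ra=2H$ on $\Sigma$). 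Collecting, the linear part is $\Delta_\Sigma v-\tfrac12\la x,\nabla_\Sigma v\ra+(|A|^2+\tfrac12)v$, exactly the first three terms of \eqref{JacobiEqn}, and $Q$ is defined to be the remainder $\Phi$ minus this linear part. Since $\tilde\Sigma$ is a self-shrinker we have $\Phi\equiv 0$, which is the equation $L_\Sigma v=0$.

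The second step is the estimate \eqref{QuadEqn} on the remainder $Q$. The remainder is a sum of terms that are either quadratic (or higher) in $(v,\nabla_\Sigma v,\nabla^2_\Sigma v)$ with coefficients that are smooth functions of the geometry of $\Sigma$, or linear in these quantities but carrying an extra geometric factor that vanishes to appropriate order. The key input is Lemma \ref{GeomLem}, which gives $|\nabla^i_\Sigma A(x)|\le C_1|x|^{-i-1}$ for $0\le i\le 2$; in particular $|A|\le C_1|x|^{-1}$ and $|x|$-dependent coefficients coming from curvature contribute factors of $|x|^{-1}$ or better. Combined with the decay $|\nabla^i_\Sigma v(x)|\le C_3|x|^{-1-i}$ from Lemma \ref{GraphLem}, one checks that every genuinely quadratic term is bounded by $C|x|^{-2}(|v|+|\nabla_\Sigma v|)$ — for instance $|v||\nabla^2_\Sigma v|\le C_3|x|^{-3}|v|\le C|x|^{-2}|v|$, $|\nabla_\Sigma v|^2\le C_3|x|^{-2}|\nabla_\Sigma v|$, etc. — while the linear-but-curvature-weighted remainder terms (those arising because e.g. $\la x,\mathbf{n}_{\tilde\Sigma}(y)\ra$ differs from $\la x,\mathbf{n}_\Sigma(x)\ra$ by terms involving $A$ and $v$) are bounded by $C|A|^2|v|\le C|x|^{-2}|v|$ or $C|A||\nabla_\Sigma v|\cdot(\text{lower order})$. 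The point is that in every case the extra factor beyond one power of $(v,\nabla_\Sigma v)$ is either a derivative of $v$ (gaining $|x|^{-1}$ over $v$ by Lemma \ref{GraphLem}) or a curvature quantity (gaining $|x|^{-1}$ by Lemma \ref{GeomLem}), so two such factors yield the claimed $|x|^{-2}$.

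The main obstacle, and where care is needed, is organizing the Taylor expansion cleanly: one must work in geodesic normal coordinates on $\Sigma$ at $x$ (or with the metric expanded to the relevant order) so that the formulas for $H_{\tilde\Sigma}$ and $\mathbf{n}_{\tilde\Sigma}$ in terms of $v$ are explicit enough to identify the linear term and to see that all remaining terms have the stated structure. The expansion of $H$ of a normal graph over a hypersurface is standard (it is the second-variation computation for area, or equivalently the linearized mean curvature plus lower-order corrections), and the expansion of $\la y,\mathbf{n}_{\tilde\Sigma}(y)\ra$ requires writing $\mathbf{n}_{\tilde\Sigma}(y)=(\mathbf{n}_\Sigma(x)-\nabla_\Sigma v(x))/\sqrt{1+|\nabla_\Sigma v(x)|^2}+(\text{curvature-weighted corrections})$ and $y=x+v\mathbf{n}_\Sigma$. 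Both are routine but bookkeeping-heavy; I would not carry out the full computation but rather invoke the standard structure of the graphical mean curvature operator (as used already in the proof of Lemma \ref{PertLem} via \eqref{GraphMCFEqn}) together with the smooth dependence of all coefficients on the geometry, and then simply collect the error terms and apply Lemmas \ref{GeomLem} and \ref{GraphLem} to get \eqref{QuadEqn}. A secondary subtlety is that the linearization of $-\tfrac12\la x,\mathbf{n}\ra$ naively produces a term $-\tfrac12\la v\mathbf{n},\mathbf{n}\ra$-type contribution whose sign and coefficient ($+\tfrac12 v$, not $-\tfrac12 v$) must be tracked against the $(|A|^2+\tfrac12)v$ appearing in \eqref{JacobiEqn}; this is consistent with the stability operator of \cite{CM2} up to the $\tfrac12 v$ shift, and matches the remark preceding the lemma that $L_\Sigma$ is a small perturbation of that operator.
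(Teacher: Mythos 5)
Your proposal is correct and follows essentially the same route as the paper: write $\tilde\Sigma$ as the normal graph $y=F+v\mathbf{n}$ over $\Sigma$ in a parametrization with $\partial^2_{ij}F(0)$ purely normal, plug into the self-shrinker equation for $\tilde\Sigma$, identify the linear part as the stability operator (using the self-shrinker equation for $\Sigma$ to cancel the zeroth-order term), and bound the remainder using Lemmas \ref{GeomLem} and \ref{GraphLem}. The paper just carries out explicitly the Taylor expansion you describe in outline (computing $\mathbf{N}$, $\partial^2_{ij}\tilde F$, $g^{ij}$, etc.), and the key structural fact you invoke implicitly --- that $\nabla^2_\Sigma v$ enters only linearly, so every term of $Q$ carries at least one factor of $v$ or $\nabla_\Sigma v$ together with one extra decaying factor from $A$, $\nabla A$, $v$, or $\nabla v$ --- is exactly what those computations verify.
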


\begin{proof}
Fix $x_0\in\Sigma\setminus B_{R_3}$. We choose a local parametrization of $\Sigma$ in a neighborhood of $x_0$, $F:\Omega\To\Sigma$, satisfying that: $F(0)=x_0$, $\la\partial_iF(0),\partial_jF(0)\ra=\delta_{ij}$ and $\partial^2_{ij}F(0)=a_{ij}(0)\textbf{n}(x_0)$ with $a_{ij}=A(\partial_i F,\partial_j F)$ and $a_{ij}(0)=0$ if $i\ne j$. Here $\Omega$ is a domain in $\Real^n$ containing $0$. Thus, in a neighborhood of $y_0=x_0+v(x_0)\textbf{n}(x_0)$, there exists a local parameterization of $\tilde{\Sigma}$, $\tilde{F}:\Omega\To\tilde{\Sigma}$, such that for $p\in\Omega$,
\begin{equation}
\label{JacobiEqn1}
\tilde{F}(p)=F(p)+v(p)\textbf{n}(p).
\end{equation} 
Here, we identify $v(p)$ and $\textbf{n}(p)$ with $v(F(p))$ and $\textbf{n}(F(p))$ respectively.

First, we calculate the tangent vectors $\partial_i \tilde{F}$ for $1\le i\le n$. Namely,
\begin{equation}
\label{JacobiEqn2}
\partial_i\tilde{F}=\partial_i F+(\partial_i v)\textbf{n}
+v\partial_i\textbf{n}.
\end{equation}
Thus, the unit normal vector $\textbf{n}(\tilde{F}(0))$ to $\tilde{\Sigma}$ at $\tilde{F}(0)$ parallels to the following vector, that is,
\begin{equation}
\label{JacobiEqn4}
\textbf{N}=-\sum_k \left[\prod_{l\ne k}(1-a_{ll}v)\right]\left(\partial_k v\right)\partial_k F
+\left[\prod_k(1-a_{kk}v)\right]\textbf{n}.
\end{equation}

Next, we calculate the second derivatives of $\tilde{F}$ at $p=0$. It follows from (\ref{JacobiEqn2}) that
\begin{equation}
\label{JacobiEqn5}
\partial^2_{ij}\tilde{F}=\partial^2_{ij}F+\left(\partial^2_{ij}v\right)\textbf{n}
+\left(\partial_iv\right)\partial_j\textbf{n}+\left(\partial_jv\right)\partial_i\textbf{n}
+v\partial^2_{ij}\textbf{n}.
\end{equation}
Note that, at $p=0$, 
\begin{equation}
\label{JacobiEqn6}
\begin{split}
\partial^2_{ij}\textbf{n}&=\sum_k\left\la\partial^2_{ij}\textbf{n},\partial_kF\right\ra\partial_kF
+\left\la\partial^2_{ij}\textbf{n},\textbf{n}\right\ra\textbf{n}\\
&=-\sum_k\left(\partial_ja_{ik}\right)\partial_k F-a_{ii}a_{jj}\delta_{ij}\textbf{n}.
\end{split}
\end{equation}
Plugging (\ref{JacobiEqn6}) into (\ref{JacobiEqn5}) gives that, at $p=0$,
\begin{equation}
\label{JacobiEqn7}
\begin{split}
\partial^2_{ij}\tilde{F}=&-a_{ii}\left(\partial_j v\right)\partial_i F-a_{jj}\left(\partial_i v\right)\partial_j F-\sum_k\left(\partial_j a_{ik}\right)v\partial_k F\\
&+\left(a_{ij}-a_{ii}a_{jj}\delta_{ij}v+\partial_{ij}^2v\right)\textbf{n}.
\end{split}
\end{equation}
Thus, making inner product with the vector $\textbf{N}$, we obtain that, at $p=0$,
\begin{equation}
\label{JacobiEqn8}
\begin{split}
\left\la\partial^2_{ij}\tilde{F},\textbf{N}\right\ra=&
a_{ii}\left(\partial_i v\right)\left(\partial_j v\right)\prod_{k\ne i}(1-a_{kk}v)+a_{jj}\left(\partial_i v\right)\left(\partial_j v\right)\prod_{k\ne j}(1-a_{kk}v)\\
&+\left(a_{ij}-a_{ii}a_{jj}\delta_{ij}v+\partial_{ij}^2v\right)\prod_k(1-a_{kk}v)\\
&+v\sum_k\left[\prod_{l\ne k}(1-a_{ll}v)\right]\left(\partial_j a_{ik}\right)\partial_k v.
\end{split}
\end{equation}

Next, it is easy to calculate the pull back metric $g=\left(g_{ij}\right)$ from $\tilde{\Sigma}$, that is, at $p=0$,
\begin{equation}
\label{Metric}
g_{ij}=\left\la\partial_i F,\partial_j F\right\ra=(1-a_{ii}v)(1-a_{jj}v)\delta_{ij}+(\partial_i v)(\partial_j v).
\end{equation}
Thus, at $p=0$, the determinant $\det(g)$ and the inverse $g^{-1}=\left(g^{ij}\right)$ of $g$ are:
\begin{align}
\label{JacobiEqn9.1}
&\det(g)=1-2\sum_k a_{kk}v+Q_{1}(p,v,Dv),\\
\label{JacobiEqn9.2}
&g^{ij}\det(g)=\left\{
\begin{array}{ll}
Q_{2ij}(p,v,Dv)\partial_i v & \text{if}\ i\ne j\\
1-2\sum_{k\ne i}a_{kk}v+Q_{2ij}(p,v,Dv) & \text{if}\ i=j.
\end{array}
\right.
\end{align}
Here, by Lemmas \ref{GeomLem} and \ref{GraphLem}, there exists $\lam_1>0$, depending only on $C_1$ and $C_3$, such that $\abs{Q_{1}(0,v,Dv)}\le\lam_1\abs{x_0}^{-2}\left(\abs{v(0)}+\abs{Dv(0)}\right)$  and so do $\abs{Q_{2ij}(0,v,Dv)}$ for $1\le i,j\le n$.

Finally, we compute, at $p=0$, 
\begin{equation}
\label{JacobiEqn10}
\left\la\tilde{F},\textbf{N}\right\ra=-\sum_k\left[\prod_{l\ne k}(1-a_{ll}v)\right]\left\la F,\partial_k F\right\ra\partial_k v
+\left(\left\la F,\textbf{n}\right\ra+v\right)\prod_k(1-a_{kk}v).
\end{equation}

Since $\tilde{\Sigma}$ is a self-shrinker under the mean curvature flow, we have that
\begin{equation}
\label{JacobiEqn11}
\sum_{i,j}g^{ij}\left\la\partial^2_{ij}\tilde{F},\textbf{N}\right\ra
=-\frac{1}{2}\left\la\tilde{F},\textbf{N}\right\ra.
\end{equation}
Substituting all the previous computation into the equation (\ref{JacobiEqn11}) gives that, at $p=0$,
\begin{equation}
\label{JacobiEqn12}
\sum_k\partial^2_{kk}v-\frac{1}{2}\sum_k\left\la F,\partial_k F\right\ra\partial_k v
+\left(\abs{A}^2+\frac{1}{2}\right)v=\tilde{Q}\left(p,v,Dv,D^2v\right),
\end{equation}
where the function $\tilde{Q}$ satisfies that
\begin{equation}
\label{JacobiEqn13}
\abs{\tilde{Q}\left(0, v,Dv,D^2v\right)}
\le\lam_2\abs{x_0}^{-2}\left(\abs{v(0)}+\abs{Dv(0)}\right).
\end{equation}
Here $\lam_2>0$ depends only on $C_1$ and $C_3$ in Lemma \ref{GraphLem}. Therefore, by the choice of the parameterization in a neighborhood of  $x_0$, Lemma \ref{JacobiLem} follows immediately from (\ref{JacobiEqn13}).
\end{proof}

\section{Proof of Theorem \ref{UniqueThm}}

By the discussion in section 2, Theorem \ref{UniqueThm} can be related to the unique continuation problem for a weakly elliptic equation. Consider the simplest case that $\Sigma=\Real^n\times\{0\}$. We define $\bar{v}(r,\theta)=v(1/r,\theta)$, where $(r,\theta)\in (0,+\infty)\times S^{n-1}$ is the spherical coordinates of $\Real^n\times\{0\}$. Then, the equation (\ref{JacobiEqn}) gives that
\begin{equation}
\label{UniConEqn1}
\Delta_{\Real^n}\bar{v}+\frac{1}{2}\left[r^{-3}+4(2-n)r^{-1}\right]\partial_r\bar{v}
+\frac{1}{2}r^{-4}\bar{v}+\bar{Q}\left(r,\theta,\bar{v},D\bar{v},D^2\bar{v}\right)=0,
\end{equation}
or equivalently,
\begin{equation}
\label{UniConEqn2}
\text{div}\left(r^{4-2n}\text{e}^{-\frac{1}{4r^2}}D\bar{v}\right)
+\frac{1}{2}r^{-2n}\text{e}^{-\frac{1}{4r^2}}\bar{v}
+r^{4-2n}\text{e}^{-\frac{1}{4r^2}}\bar{Q}\left(r,\theta,\bar{v},D\bar{v},D^2\bar{v}\right)=0.
\end{equation}
Here, the funtion $\bar{Q}$ satisfies that
\begin{equation}
\label{UniConEqn3}
\abs{\bar{Q}\left(r,\theta,\bar{v},D\bar{v},D^2\bar{v}\right)}\le C_4r^{-2}\left(\abs{\bar{v}}+r^2\abs{D\bar{v}}\right).
\end{equation}
Hence, we cannot apply some well-known strong unique continuation theorems in \cite{J}, \cite{JK} and \cite{GL1,GL2} to the equations (\ref{UniConEqn1}) or (\ref{UniConEqn2}). In \cite{PW}, Pan and Wolff proved the unique continuation results for operators in the general form, $\Delta_{\Real^n}+\la W(x),\nabla_{\Real^n}\ra+V(x)$, where $\abs{x}^{-2}\abs{V(x)}$ is bounded and $\abs{x}^{-1}\abs{W(x)}$ is small. They also constructed counter examples when the conditions are violated. However, it seems impossible to transform the equation (\ref{UniConEqn2}) to that of their form by changing coordinates. Besides, $\bar{v}$ need not vanish of infinite order at the origin under our assumption. For general $\Sigma$, global coordinates of $\Sigma$ near infinity may not even exist. 

Instead, we associate the elliptic differential equation (\ref{JacobiEqn}) to the parabolic equation in the following lemma. Define a function $w:\cup_{t\in (0,1]}\Sigma_t\times\{t\}\To\Real$ by 
\begin{equation}
\label{ParaEqn}
w(x,t)=\sqrt{t}\ v\left(\frac{x}{\sqrt{t}}\right).
\end{equation}
Let $d/dt$ denote the total derivative with respect to time $t$. Then, by Lemma \ref{JacobiLem}, $w$ satisfies the following equation:

\begin{lem}
\label{HeatLem}
Given $\eps_1>0$, there exists $R>R_3$ such that on $\cup_{t\in (0,1]}\Sigma_t\setminus B_R\times\{t\}$,
\begin{equation}
 \label{HeatEqn}
\abs{\frac{dw}{dt}+\Delta_{\Sigma_t}w}\le \eps_1 \left(\abs{w}+\abs{\nabla_{\Sigma_t}w}\right).
\end{equation}
\end{lem}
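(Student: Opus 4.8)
The plan is to start from the elliptic equation (\ref{JacobiEqn}) satisfied by $v$ on $\Sigma\setminus B_{R_3}$ and transfer it, via the parabolic rescaling (\ref{ParaEqn}), to an equation for $w$ on the backward flow $\cup_t \Sigma_t\setminus B_R\times\{t\}$. First I would compute the total time derivative $dw/dt$ along the flow. Writing $x = \sqrt{t}\,y$ for $y\in\Sigma$, so that $w(x,t) = \sqrt{t}\,v(y)$, the total derivative $d/dt$ must account both for the explicit $\sqrt{t}$ factor, for the change $y = x/\sqrt{t}$ when $x$ is held fixed in the ambient sense, and — since $d/dt$ denotes the total (material) derivative along $\{\Sigma_t\}$ moving by (\ref{MCFEqn}) — for the normal velocity $H\mathbf{n}$. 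Since $v$ and $w$ are functions on the hypersurfaces (identified with their graphs), the normal-velocity contribution will interact with $v$ only through the fact that $v$ is a normal graph function; the cleanest route is to treat $w(\cdot,t)$ as the scalar function $\sqrt t\, v(x/\sqrt t)$ on $\Sigma_t$ and differentiate directly, using $\Delta_{\Sigma_t} = t^{-1}\Delta_\Sigma$ (scaling of the Laplacian under $\Sigma_t = \sqrt t\,\Sigma$) and $\nabla_{\Sigma_t} = t^{-1/2}\nabla_\Sigma$.

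The key computation is then: $dw/dt$ produces a term $\tfrac{1}{2}t^{-1/2}v - \tfrac{1}{2}t^{-1/2}\la y,\nabla_\Sigma v\ra$ (the $\tfrac12 t^{-1/2} v$ from differentiating $\sqrt t$, the drift term from $\partial_t(x/\sqrt t) = -\tfrac12 t^{-3/2} x$ paired with $\nabla v$), while $\Delta_{\Sigma_t} w = \sqrt t\cdot t^{-1}\Delta_\Sigma v = t^{-1/2}\Delta_\Sigma v$. Adding these and invoking (\ref{JacobiEqn}) in the form $\Delta_\Sigma v - \tfrac12\la y,\nabla_\Sigma v\ra = -(\abs{A}^2+\tfrac12)v - Q$, one finds
\begin{equation*}
\frac{dw}{dt}+\Delta_{\Sigma_t}w = t^{-1/2}\Big[\tfrac12 v - \abs{A}^2 v - \tfrac12 v - Q\Big] + (\text{possible }H\mathbf n\text{ correction}) = t^{-1/2}\big(-\abs{A}^2 v - Q\big) + \cdots,
\end{equation*}
so the leading drift and the $\tfrac12 v$ terms cancel exactly, leaving only the ``small'' terms $\abs{A}^2 v$ and $Q$. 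Now I would estimate the right-hand side using Lemma \ref{GeomLem} ($\abs{A}(y)\le C_1\abs{y}^{-1}$, hence $\abs{A}^2\le C_1^2\abs{y}^{-2}$) and the bound (\ref{QuadEqn}) on $Q$ ($\abs{Q}\le C_4\abs{y}^{-2}(\abs v + \abs{\nabla_\Sigma v})$). Translating back: $\abs v(y) = t^{-1/2}\abs w(x)$, $\abs{\nabla_\Sigma v}(y) = t^{-1/2}\cdot\sqrt t\,\abs{\nabla_{\Sigma_t}w}(x) \cdot$ (scaling bookkeeping), and $\abs{y}^{-2} = t\abs{x}^{-2}$, so each $t^{-1/2}\cdot\abs{y}^{-2}\cdot(\cdots)$ collapses to a bound of the form $C\abs{x}^{-2}(\abs w + \abs{\nabla_{\Sigma_t}w})$ uniformly in $t\in(0,1]$. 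Choosing $R$ large enough that $C R^{-2} \le \eps_1$ on $\Real^{n+1}\setminus B_R$ gives (\ref{HeatEqn}).

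The main obstacle I anticipate is the correct treatment of the total derivative $d/dt$: one must be careful about what is held fixed (the abstract parameter on $\Sigma$ versus the ambient point), and whether the material derivative along the backward flow (\ref{MCFEqn}) introduces an extra term from the normal motion $H\mathbf n$ acting on the graph function $v$. In fact, because $\Sigma$ is itself a self-shrinker, $H = \tfrac12\la y,\mathbf n\ra$, and the flow $\Sigma_t = \sqrt t\,\Sigma$ is exactly the self-similar one, so the parametrization is consistent and the ``correction'' should be absorbed into the drift term $\tfrac12\la y,\nabla_\Sigma v\ra$ that already appears in (\ref{JacobiEqn}) — this is precisely why the cancellation is clean. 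Verifying this consistency carefully (rather than picking up a spurious $\abs{y}^{-1}$-order term that would not be small enough) is the delicate point; everything else is the routine scaling bookkeeping and the application of the decay estimates from Lemmas \ref{GeomLem}, \ref{GraphLem}, and \ref{JacobiLem}.
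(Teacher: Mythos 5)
Your proposal is correct and follows essentially the same route as the paper's proof: compute $\tfrac{dw}{dt}+\Delta_{\Sigma_t}w$ using the chain rule and the scalings $\Delta_{\Sigma_t}=t^{-1}\Delta_\Sigma$, $\nabla_{\Sigma_t}=t^{-1/2}\nabla_\Sigma$, exploit the self-shrinker identity $H=\tfrac{1}{2t}\la x,\mathbf n\ra$ (so the normal velocity $H\mathbf n$ cancels the normal component of $-\tfrac{x}{2t^{3/2}}$ and the material derivative of $y=x/\sqrt t$ is purely tangential), substitute (\ref{JacobiEqn}) to kill the $\tfrac12 v$ and drift terms exactly, and finally bound the leftover $|A|^2v+Q$ via Lemmas \ref{GeomLem} and \ref{JacobiLem} and pick $R$ large. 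You correctly identify the $H\mathbf n$ correction as the only delicate point and correctly resolve it.
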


\begin{proof}
By a straightforward computation, we get that
\begin{equation}
\label{HeatEqn1}
\begin{split}
&\frac{dw}{dt}+\Delta_{\Sigma_t}w\\
= &\frac{1}{2\sqrt{t}}\ v\left(\frac{x}{\sqrt{t}}\right)+\sqrt{t}\left\la \nabla_\Sigma v\left(\frac{x}{\sqrt{t}}\right),\frac{1}{\sqrt{t}}\frac{\partial x}{\partial t}
-\frac{x}{2t\sqrt{t}}\right\ra
+\frac{1}{\sqrt{t}}\ \Delta_\Sigma v\left(\frac{x}{\sqrt{t}}\right).
\end{split}
\end{equation}
Note that for $x\in\Sigma_t$ and $t\in(0,1]$,
\begin{equation}
\label{HeatEqn2}
\partial_t x=H\textbf{n}\quad\text{and}\quad H=\frac{\la x,\textbf{n}\ra}{2t}.
\end{equation}
Hence, the equation (\ref{HeatEqn1}) and (\ref{HeatEqn2}) give that,
\begin{equation}
\label{HeatEqn3}
\frac{dw}{dt}+\Delta_{\Sigma_t}w=\frac{1}{\sqrt{t}}\left(\Delta_\Sigma v\left(\frac{x}{\sqrt{t}}\right)-\frac{1}{2}\left\la\frac{x}{\sqrt{t}},
\nabla_\Sigma v\left(\frac{x}{\sqrt{t}}\right)\right\ra+\frac{1}{2}
v\left(\frac{x}{\sqrt{t}}\right)\right).
\end{equation}
Therefore, Lemma \ref{HeatLem} follows immediately from Lemmas \ref{GeomLem} and \ref{JacobiLem}.
\end{proof}

The following is devoted to establishing a Carleman inequality by adapting the arguments in \cite{ESS}. First, we prove the following key identity (see (\ref{KeyIdEqn}) below) which is a generalization of that in Lemma 1 of \cite{ESS} to our geometric setting\footnote{Under slightly different notations, a similar identity might have already been implicitly obtained in \cite{EF}.}. Namely, 
\begin{lem}
\label{KeyIdLem}
Assume that $\phi$ and $\Ker$ are smooth functions on $\cup_{t\in (0,1]}\Sigma_t\times\{t\}$ and that $\Ker$ is positive. And let $\mathcal{F}=\left(d\Ker/dt-\Delta_{\Sigma_t}\Ker\right)/\Ker$. Then, the following identity holds on $\Sigma_t$:
\begin{equation}
 \label{KeyIdEqn}
\begin{split} 
&\emph{div}_{\Sigma_t}(2\Ker\frac{d\phi}{dt}\nabla_{\Sigma_t}\phi
+\abs{\nabla_{\Sigma_t}\phi}^2\nabla_{\Sigma_t}\Ker
-2\left\la\nabla_{\Sigma_t}\phi,\nabla_{\Sigma_t}\Ker\right\ra\nabla_{\Sigma_t}\phi\\
&+\mathcal{F}\Ker\phi\nabla_{\Sigma_t}\phi+\frac{1}{2}\phi^2\mathcal{F}
\nabla_{\Sigma_t}\Ker-\frac{1}{2}\phi^2\Ker\nabla_{\Sigma_t}\mathcal{F})
-\frac{d}{dt}\left(\abs{\nabla_{\Sigma_t}\phi}^2\Ker+\frac{1}{2}\phi^2\mathcal{F}\Ker\right)\\
=&-2\nabla^2_{\Sigma_t}\log\Ker\left(\nabla_{\Sigma_t}\phi,\nabla_{\Sigma_t}\phi\right)\Ker
-2\left(\frac{d\phi}{dt}-\left\la\nabla_{\Sigma_t}\log\Ker,\nabla_{\Sigma_t}\phi\right\rangle
+\frac{1}{2}\phi\mathcal{F}\right)^2\Ker\\
&+2\left(\frac{d\phi}{dt}-\left\la\nabla_{\Sigma_t}\log\Ker,\nabla_{\Sigma_t}\phi\right\ra
+\frac{1}{2}\phi\mathcal{F}\right)\left(\frac{d\phi}{dt}+\Delta_{\Sigma_t}\phi\right)\Ker\\
&-2H A\left(\nabla_{\Sigma_t}\phi,\nabla_{\Sigma_t}\phi\right)\Ker
-\frac{1}{2}\phi^2\left(\frac{d\mathcal{F}}{dt}+\Delta_{\Sigma_t}\mathcal{F}\right)\Ker.
\end{split}
\end{equation}
\end{lem}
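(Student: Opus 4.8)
The plan is to verify the identity \eqref{KeyIdEqn} by a direct computation, carefully bookkeeping the time derivatives along the moving hypersurfaces $\Sigma_t$ and the spatial divergence terms on a fixed slice. Since $\{\Sigma_t\}$ moves by backward mean curvature flow with normal velocity $H\textbf{n}$, the total derivative $d/dt$ and the intrinsic Laplacian $\Delta_{\Sigma_t}$ do not commute; the commutator, together with the evolution of the area element, is precisely what produces the extra terms $-2HA(\nabla\phi,\nabla\phi)\Ker$ absent from the Euclidean computation in Lemma 1 of \cite{ESS}. So first I would record the two standard commutation/transport formulas on a mean curvature flow: for a function $f$ on $\cup_t\Sigma_t\times\{t\}$,
\begin{equation*}
\frac{d}{dt}\left(\Delta_{\Sigma_t}f\right)-\Delta_{\Sigma_t}\left(\frac{df}{dt}\right)
= 2HA\left(\nabla_{\Sigma_t}^2 f\right)+\text{(lower order in $\nabla f$)},
\end{equation*}
and the transport of the volume form, $\frac{d}{dt}\,d\mu_t = -H^2\,d\mu_t$ on a backward flow (with our sign convention $\partial_t x=H\textbf{n}$), equivalently $\frac{d}{dt}\int_{\Sigma_t}f\,d\mu_t=\int_{\Sigma_t}\left(\frac{df}{dt}-H^2 f\right)d\mu_t$. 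These are the only geometric inputs beyond ordinary calculus.

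Next I would set $\psi=\log\Ker$, so that $\Ker>0$ makes this legitimate, and rewrite $\mathcal{F}=d\psi/dt-\Delta_{\Sigma_t}\psi-|\nabla_{\Sigma_t}\psi|^2$ (using $\Delta\Ker/\Ker=\Delta\psi+|\nabla\psi|^2$). The strategy mirrors \cite{ESS}: introduce the ``drift operator'' $\mathcal{L}\phi:=\frac{d\phi}{dt}-\langle\nabla_{\Sigma_t}\psi,\nabla_{\Sigma_t}\phi\rangle+\tfrac12\phi\mathcal{F}$, which is the natural conjugate of $d/dt+\Delta_{\Sigma_t}$ with weight $\Ker$. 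The right-hand side of \eqref{KeyIdEqn} is then visibly $-2\Ker\,\nabla^2\psi(\nabla\phi,\nabla\phi)-2\Ker(\mathcal L\phi)^2+2\Ker(\mathcal L\phi)(\frac{d\phi}{dt}+\Delta\phi)-2HA(\nabla\phi,\nabla\phi)\Ker-\tfrac12\phi^2(\frac{d\mathcal F}{dt}+\Delta\mathcal F)\Ker$. The computation proceeds by expanding $\frac{d}{dt}\left(|\nabla_{\Sigma_t}\phi|^2\Ker+\tfrac12\phi^2\mathcal F\Ker\right)$ using the transport formulas, expanding the divergence of each of the six vector fields inside $\text{div}_{\Sigma_t}(\cdots)$, and then matching: one checks that, after using the definition of $\mathcal F$ to eliminate $\Delta\Ker$ and completing the square in $\mathcal L\phi$, every term cancels except those on the claimed right-hand side. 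The term $-2HA(\nabla\phi,\nabla\phi)\Ker$ arises from the commutator in $\frac{d}{dt}\langle\nabla\phi,\nabla\Ker\rangle$-type expressions, i.e. from $\frac{d}{dt}|\nabla_{\Sigma_t}\phi|^2=2\langle\nabla\frac{d\phi}{dt},\nabla\phi\rangle+2HA(\nabla\phi,\nabla\phi)$, which is the slice-derivative analogue of the metric evolution $\frac{d}{dt}g_{ij}=2Ha_{ij}$ on a backward flow.

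The main obstacle I anticipate is purely organizational rather than conceptual: keeping the dozen-odd third-order terms (those involving $\nabla^3\phi$, $\nabla\Delta\phi$, $\nabla^2\psi\cdot\nabla\phi$, $\phi\nabla\mathcal F$, etc.) correctly paired so that the divergence structure closes. In particular, the term $\tfrac12\phi^2\mathcal F\nabla\Ker-\tfrac12\phi^2\Ker\nabla\mathcal F$ inside the divergence is engineered so that $\text{div}(\tfrac12\phi^2\mathcal F\nabla\Ker)$ contributes $\tfrac12\phi^2\mathcal F\Delta\Ker$, which is exactly what is needed to convert a stray $\Delta\Ker$ into $\mathcal F\Ker+(\text{lower order})$ via the definition of $\mathcal F$; tracking such cancellations is where errors creep in. A secondary point requiring care is that, unlike in \cite{ESS}, integration by parts on $\Sigma_t$ is a purely pointwise (divergence-theorem-free) identity here — the statement is a local identity of functions on $\Sigma_t$, so no boundary terms enter and the flow's noncompactness is irrelevant at this stage; the Carleman inequality and its boundary terms come afterwards. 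I would therefore present the proof as: (i) state the two transport formulas; (ii) expand the time-derivative term; (iii) expand the divergence term field-by-field; (iv) substitute the definition of $\mathcal F$ and complete the square; and (v) collect, noting that the only irreducible leftovers are the five terms on the right-hand side of \eqref{KeyIdEqn}.
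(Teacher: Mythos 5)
Your outline reproduces the paper's own proof: the paper likewise sets $\psi=\log\Ker$, introduces the drift quantity $\frac{d\phi}{dt}-\langle\nabla_{\Sigma_t}\log\Ker,\nabla_{\Sigma_t}\phi\rangle+\tfrac12\phi\mathcal F$, organizes everything through the pointwise divergence identity $f\Delta g=g\Delta f+\text{div}(f\nabla g - g\nabla f)$, and obtains the $-2HA(\nabla\phi,\nabla\phi)\Ker$ term exactly from $\tfrac{d}{dt}|\nabla_{\Sigma_t}\phi|^2-2\langle\nabla_{\Sigma_t}(\tfrac{d\phi}{dt}),\nabla_{\Sigma_t}\phi\rangle=\partial_t g_t^{ij}\,\partial_i\phi\,\partial_j\phi=2HA(\nabla_{\Sigma_t}\phi,\nabla_{\Sigma_t}\phi)$, which is your flow-commutator formula. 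The extra tools you mention (the $d/dt$–$\Delta$ commutator for a general $f$, and the evolution of $d\mu_t$) are not actually needed for this purely pointwise identity, and your parenthetical $\tfrac{d}{dt}g_{ij}=2Ha_{ij}$ has an inconsistent sign relative to the $|\nabla\phi|^2$ evolution you correctly state, but neither affects the argument.
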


\begin{proof}
First, note that, given functions $f$ and $g$ on a hypersurface $N$, 
\begin{equation}
\label{KeyIdEqn1}
f\Delta_Ng=g\Delta_Nf+\text{div}_N\left(f\nabla_Ng-g\nabla_Nf\right).
\end{equation}
Thus, a straightforward calculation gives that
\begin{equation}
\label{KeyIdEqn2}
\begin{split}
& 2\left(\frac{d\phi}{dt}-\la\nabla_{\Sigma_t}\log\Ker,\nabla_{\Sigma_t}\phi\ra
+\frac{1}{2}\phi\mathcal{F}\right)\left(\frac{d\phi}{dt}+\Delta_{\Sigma_t}\phi\right)\Ker\\
&-2\left(\frac{d\phi}{dt}-\la\nabla_{\Sigma_t}\log\Ker,\nabla_{\Sigma_t}\phi\ra
+\frac{1}{2}\phi\mathcal{F}\right)^2\Ker\\
=&\text{div}_{\Sigma_t}\left(2\Ker\frac{d\phi}{dt}\nabla_{\Sigma_t}\phi
+\phi\mathcal{F}\Ker\nabla_{\Sigma_t}\phi+\frac{1}{2}\phi^2\mathcal{F}\nabla_{\Sigma_t}\Ker
-\frac{1}{2}\phi^2\Ker\nabla_{\Sigma_t}\mathcal{F}\right)\\
&-\frac{d}{dt}\left(\abs{\nabla_{\Sigma_t}\phi}^2\Ker+\frac{1}{2}\phi^2\mathcal{F}\Ker\right)
+\frac{1}{2}\phi^2\Ker\left(\frac{d\mathcal{F}}{dt}+\Delta_{\Sigma_t}\mathcal{F}\right)
+\abs{\nabla_{\Sigma_t}\phi}^2\Delta_{\Sigma_t}\Ker\\
&-2\la\nabla_{\Sigma_t}\Ker,\nabla_{\Sigma_t}\phi\ra\Delta_{\Sigma_t}\phi
-2\la\nabla_{\Sigma_t}\log\Ker,\nabla_{\Sigma_t}\phi\ra^2\Ker\\
&+\left(\frac{d}{dt}\abs{\nabla_{\Sigma_t}\phi}^2-2\left\la\nabla_{\Sigma_t}\left(\frac{d\phi}{dt}\right)
,\nabla_{\Sigma_t}\phi\right\ra\right)\Ker.
\end{split}
\end{equation}

Note that,
\begin{equation}
\label{KeyIdEqn3}
\begin{split}
&2\left\la\nabla_{\Sigma_t}\Ker,\nabla_{\Sigma_t}\phi\right\ra\Delta_{\Sigma_t}\phi\\
=&\text{div}_{\Sigma_t}\left(2\la\nabla_{\Sigma_t}\Ker,\nabla_{\Sigma_t}\phi\ra
\nabla_{\Sigma_t}\phi\right)-2\nabla_{\Sigma_t}\phi
\left(\la\nabla_{\Sigma_t}\Ker,\nabla_{\Sigma_t}\phi\ra\right)\\
=&\text{div}_{\Sigma_t}\left(2\la\nabla_{\Sigma_t}\Ker,\nabla_{\Sigma_t}\phi\ra
\nabla_{\Sigma_t}\phi\right)-2\left\la\nabla_{\nabla_{\Sigma_t}\phi}
\nabla_{\Sigma_t}\Ker,\nabla_{\Sigma_t}\phi\right\ra\\
&-2\left\la\nabla_{\Sigma_t}\Ker,\nabla_{\nabla_{\Sigma_t}\phi}\nabla_{\Sigma_t}\phi\right\ra\\
=&\text{div}_{\Sigma_t}\left(2\la\nabla_{\Sigma_t}\Ker,\nabla_{\Sigma_t}\phi\ra
\nabla_{\Sigma_t}\phi\right)-2\nabla^2_{\Sigma_t}\Ker(\nabla_{\Sigma_t}\phi,\nabla_{\Sigma_t}\phi)\\
&-\left\la\nabla_{\Sigma_t}\Ker,\nabla_{\Sigma_t}\abs{\nabla_{\Sigma_t}\phi}^2\right\ra\\
\end{split}
\end{equation}
\begin{equation*}
\begin{split}
=&\text{div}_{\Sigma_t}\left(2\la\nabla_{\Sigma_t}\Ker,\nabla_{\Sigma_t}\phi\ra
\nabla_{\Sigma_t}\phi-\abs{\nabla_{\Sigma_t}\phi}^2\nabla_{\Sigma_t}\Ker\right)\\
&-2\nabla^2_{\Sigma_t}\Ker(\nabla_{\Sigma_t}\phi,\nabla_{\Sigma_t}\phi)
+\abs{\nabla_{\Sigma_t}\phi}^2\Delta_{\Sigma_t}\Ker.
\end{split}
\end{equation*}
Also, it is easy to check that
\begin{equation}
\label{KeyIdEqn4}
\nabla_{\Sigma_t}^2\log\Ker(\nabla_{\Sigma_t}\phi,\nabla_{\Sigma_t}\phi)\Ker
=\nabla_{\Sigma_t}^2\Ker(\nabla_{\Sigma_t}\phi,\nabla_{\Sigma_t}\phi)
-\la\nabla_{\Sigma_t}\log\Ker,\nabla_{\Sigma_t}\phi\ra^2\Ker.
\end{equation}

Finally, let $\Omega\subset\Real^n$ and $F:\Omega\times (0,1]\To\Real^{n+1}$ be the local parametrization of $\{\Sigma_t\}_{t\in (0,1]}$ such that $F(\Omega,t)\subset\Sigma_t$ and $dF/dt=H\textbf{n}$. Let $g_t$ be the pull back metric on $\Omega$ from $\Sigma_t$.
Thus, we have that
\begin{equation}
\label{KeyIdEqn5}
\begin{split}
&\frac{d}{dt}\abs{\nabla_{\Sigma_t}\phi}^2-2\left\la\nabla_{\Sigma_t}\left(\frac{d\phi}{dt}\right),
\nabla_{\Sigma_t}\phi\right\ra\\
=&\sum_{i,j}\partial_t g_t^{ij}\partial_i\phi\partial_j\phi
=2HA(\nabla_{\Sigma_t}\phi,\nabla_{\Sigma_t}\phi),
\end{split}
\end{equation}
where $\partial_t g_t^{ij}$ denotes the time derivative fixing a point in $\Omega$, and we use the following equation in the second equality (see the appendix B in \cite{Ec}):
\begin{equation}
\label{KeyIdEqn6}
\partial_t g_t^{ij}=2H\sum_{k,l}g_t^{ik}g_t^{jl}
A(\partial_k F,\partial_l F).
\end{equation}

Therefore, Lemma \ref{KeyIdLem} follows from (\ref{KeyIdEqn2})-(\ref{KeyIdEqn6}).
\end{proof}

Define $\Sigma_0$ to be the cone $C$. Note that, by Lemma \ref{PertLem} and the standard regularity theory for mean curvature flow, $\{\Sigma_t\setminus B_{2R_2}\}_{t\in [0,1]}$ is one parameter smooth family of hypersurfaces in $\Real^{n+1}$. We integrate the identity (\ref{KeyIdEqn}) over $\Sigma_t$ against the Hausdorff measure $d\mu_t$ on $\Sigma_t$ first, followed by the integration over $[0,T]$ against $dt$. Using the dominant convergence theorem, we conclude that

\begin{lem}
 \label{IntelKeyIdLem}
Given $R>2R_2$ and $0<T\le 1$, let $\Ker$ be a smooth positive function in 
\begin{equation}
Q_{R,T}=\left\{(x,t)\ \vline\ x\in\Sigma_t\setminus\bar{B}_R, t\in [0,T]\right\}.
\end{equation}
Assume that $\phi,\nabla_{\Sigma_t}\phi\in C^0_c(Q_{R,T})$ with $\phi(\cdot,0)\equiv 0$ and $\phi$ is smooth in the interior of $Q_{R,T}$. As in Lemma \ref{KeyIdLem}, set $\mathcal{F}=\left(d\Ker/dt-\Delta_{\Sigma_t}\Ker\right)/\Ker$. Then, we have the following identity:
\begin{equation}
 \label{IntelKeyIdEqn}
\begin{split}
 & \int_0^T\int_{\Sigma_t}\left(2\nabla_{\Sigma_t}^2\log\Ker+2HA+H^2g_t\right)
\left(\nabla_{\Sigma_t}\phi,\nabla_{\Sigma_t}\phi\right)\Ker d\mu_t dt\\
&+\int_0^T\int_{\Sigma_t}\frac{1}{2}\left(\frac{d\mathcal{F}}{dt}
+\Delta_{\Sigma_t}\mathcal{F}+H^2\mathcal{F}\right)\phi^2\Ker d\mu_t dt\\
\le & \int_0^T\int_{\Sigma_t}\left(\frac{d\phi}{dt}+\Delta_{\Sigma_t}\phi\right)^2\Ker d\mu_t dt
+\int_{\Sigma_T}\left(\abs{\phi}^2\mathcal{F}+\abs{\nabla_{\Sigma_T}\phi}^2\right)\Ker d\mu_T, 
\end{split}
\end{equation}
where $g_t$ is the metric on $\Sigma_t$ induced from $\Real^{n+1}$.
\end{lem}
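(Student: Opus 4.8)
The plan is to obtain \eqref{IntelKeyIdEqn} by integrating the pointwise identity \eqref{KeyIdEqn} from Lemma \ref{KeyIdLem}, first in space over each slice $\Sigma_t$ and then in time over $[0,T]$, and carefully accounting for the boundary terms produced by the divergence and by the $d/dt$ in \eqref{KeyIdEqn}. Concretely, one fixes a local parametrization $F:\Omega\times(0,1]\to\Real^{n+1}$ with $dF/dt=H\textbf{n}$ as in the proof of Lemma \ref{KeyIdLem}, so that the slices $\Sigma_t$ are identified with the fixed domain $\Omega$ carrying the time-dependent metric $g_t$, and one writes everything as integrals over $\Omega$ against $\sqrt{\det g_t}\,dx$. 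The key ingredient for moving the $d/dt$ past the integral sign is the first-variation formula $\partial_t(d\mu_t)=-H^2\,d\mu_t$ for the backward mean curvature flow (which follows from $\partial_t\sqrt{\det g_t}=-H^2\sqrt{\det g_t}$, itself a consequence of \eqref{KeyIdEqn6}); this is exactly what converts $-\frac{d}{dt}$ acting on $\big(|\nabla_{\Sigma_t}\phi|^2\Ker+\tfrac12\phi^2\mathcal F\Ker\big)$ into $-\frac{d}{dt}\int_{\Sigma_t}(\cdots)\,d\mu_t$ plus the extra term $+\int_{\Sigma_t}H^2\big(|\nabla_{\Sigma_t}\phi|^2\Ker+\tfrac12\phi^2\mathcal F\Ker\big)\,d\mu_t$, and it is precisely this correction that supplies the $H^2 g_t$ term in the first integrand on the left and the $+H^2\mathcal F$ term in the second integrand of \eqref{IntelKeyIdEqn}.

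The next step is to discard the divergence term. Since $\phi$ and $\nabla_{\Sigma_t}\phi$ lie in $C^0_c(Q_{R,T})$, the vector field in the $\text{div}_{\Sigma_t}(\cdots)$ of \eqref{KeyIdEqn} is compactly supported in $\Sigma_t\setminus\bar B_R$ for each $t$, so $\int_{\Sigma_t}\text{div}_{\Sigma_t}(\cdots)\,d\mu_t=0$ by the divergence theorem on the (complete, or with empty relevant boundary) hypersurface; one should remark that although $\phi$ is only assumed smooth in the interior of $Q_{R,T}$, the compact support together with the integrability of $\phi,\nabla_{\Sigma_t}\phi$ lets one apply the divergence theorem on slightly shrunken time intervals and pass to the limit, or alternatively approximate $\phi$ by smooth compactly supported functions. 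After integrating in time, the term $-\int_0^T\frac{d}{dt}\big(\int_{\Sigma_t}(|\nabla_{\Sigma_t}\phi|^2\Ker+\tfrac12\phi^2\mathcal F\Ker)\,d\mu_t\big)\,dt$ telescopes to the slice integrals at $t=0$ and $t=T$; the $t=0$ contribution vanishes because $\phi(\cdot,0)\equiv 0$ and $\nabla_{\Sigma_0}\phi(\cdot,0)\equiv0$ (the latter from $\phi,\nabla\phi\in C^0_c$ and continuity), leaving exactly the $+\int_{\Sigma_T}(|\phi|^2\mathcal F+|\nabla_{\Sigma_T}\phi|^2)\Ker\,d\mu_T$ boundary term on the right of \eqref{IntelKeyIdEqn}.

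Finally one rearranges. On the right side of \eqref{KeyIdEqn} there are four terms: the ``good'' negative term $-2\nabla^2_{\Sigma_t}\log\Ker(\nabla\phi,\nabla\phi)\Ker$, the manifestly nonpositive square $-2(\cdots)^2\Ker$, the cross term $2(\cdots)(\tfrac{d\phi}{dt}+\Delta_{\Sigma_t}\phi)\Ker$, the curvature term $-2HA(\nabla\phi,\nabla\phi)\Ker$, and $-\tfrac12\phi^2(\tfrac{d\mathcal F}{dt}+\Delta_{\Sigma_t}\mathcal F)\Ker$. Using the elementary inequality $-2ab\le -2a^2+\tfrac12 b^2$ — i.e. $2a(a+c)-2a^2=2ac$ and completing the square, concretely $-2(\cdots)^2\Ker+2(\cdots)(\tfrac{d\phi}{dt}+\Delta_{\Sigma_t}\phi)\Ker\le\tfrac12(\tfrac{d\phi}{dt}+\Delta_{\Sigma_t}\phi)^2\Ker$; a cleaner route is $-2X^2+2XY=-2(X-\tfrac12 Y)^2+\tfrac12 Y^2\le\tfrac12 Y^2$ with $X$ the parenthesized quantity and $Y=\tfrac{d\phi}{dt}+\Delta_{\Sigma_t}\phi$ — one bounds the sum of the square term and the cross term by $\tfrac12(\tfrac{d\phi}{dt}+\Delta_{\Sigma_t}\phi)^2\Ker$, which after integration gives the first term on the right of \eqref{IntelKeyIdEqn} (up to the harmless factor, which can be absorbed, or one keeps the constant as written—here I will simply use $-2X^2+2XY\le\tfrac12 Y^2$, noting the statement's right side has coefficient $1$, so in fact one uses the sharper $-2X^2+2XY\le\tfrac12 Y^2\le Y^2$, or more directly drops $-2X^2\le0$ and estimates $2XY\le 2X^2+\tfrac12Y^2$; either way the inequality in \eqref{IntelKeyIdEqn} follows a fortiori). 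Then moving the good term $-2\nabla^2_{\Sigma_t}\log\Ker(\nabla\phi,\nabla\phi)\Ker$, the curvature term $-2HA(\nabla\phi,\nabla\phi)\Ker$, the $H^2g_t$ correction from the measure, and the $\mathcal F$-terms to the left and grouping $2\nabla^2_{\Sigma_t}\log\Ker+2HA+H^2g_t$ and $\tfrac12(\tfrac{d\mathcal F}{dt}+\Delta_{\Sigma_t}\mathcal F+H^2\mathcal F)\phi^2\Ker$ yields \eqref{IntelKeyIdEqn}.

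The main obstacle I expect is the justification of the differentiation under the integral sign and the dominated-convergence argument near $t=0$: one must be sure that all slice integrals are finite and continuous in $t$ down to $t=0$ (using that $\{\Sigma_t\setminus B_{2R_2}\}_{t\in[0,1]}$ is a smooth family by Lemma \ref{PertLem}, that $\Ker$ is smooth and positive on the closed region $Q_{R,T}$, and that $\phi,\nabla_{\Sigma_t}\phi$ are continuous and compactly supported), so that the fundamental theorem of calculus in $t$ applies and the $t=0$ slice term genuinely vanishes; handling the limited regularity of $\phi$ (smooth only in the interior) cleanly, via interior integration on $[\delta,T-\delta]$ and a limit, is the technical heart of making the integration of \eqref{KeyIdEqn} rigorous.
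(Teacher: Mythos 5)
Your proof follows exactly the same route the paper takes: integrate the pointwise identity \eqref{KeyIdEqn} over each slice, kill the divergence term by compact support, use the first-variation of the volume element to convert the $-d/dt$ term into a $t=T$ boundary integral plus the $H^2$ corrections, and complete the square on the cross term. One small slip: for the backward flow $\partial_t x = H\textbf{n}$ the volume element satisfies $\partial_t(d\mu_t)=+H^2\,d\mu_t$ (not $-H^2\,d\mu_t$), which is the sign you in fact use when you produce the $+H^2g_t$ and $+\tfrac12 H^2\mathcal F\phi^2$ terms of \eqref{IntelKeyIdEqn}, so the conclusion is unaffected.
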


Next, we choose the function $\Ker$ in the previous lemma, which is a suitable variation of the choice in \cite{ESS}. From now on, we fix a $\delta\in (0,1)$. Given $\alpha>0$, $0<T\le 1$ and $R>2R_2$, we set $\Ker$ in Lemma \ref{IntelKeyIdLem} to be
\begin{equation}
 \label{TestKerEqn}
\Ker_{\alpha,T,R}(x,t)=\exp\left[2\alpha(T-t)\left(\abs{x}^{1+\delta}
-R^{1+\delta}\right)+2\abs{x}^2\right].
\end{equation}
Similarly, set $\mathcal{F}_{\alpha,T,R}=\left(d\Ker_{\alpha,T,R}/dt-\Delta_{\Sigma_t}
\Ker_{\alpha,T,R}\right)/\Ker_{\alpha,T,R}$. Then, by straightforward calculations, we have that

\begin{lem}
 \label{PropTestKerLem}
There exist $\alpha_0>0$, $R_4>2R_2$ and $C_5>0$ such that for $\alpha\ge\alpha_0$, $0<T\le 1$, $R>R_4$ and $(x,t)\in\cup_{t\in [0,T]} \Sigma_t\setminus B_R\times\{t\}$,
\begin{align}
 \label{PropTestKerEqna}
&\mathcal{F}_{\alpha,T,R}(x,T)<0\\
\label{PropTestKerEqnb}
&\frac{d}{dt}\mathcal{F}_{\alpha,T,R}+\Delta_{\Sigma_t}\mathcal{F}_{\alpha,T,R}
+H^2\mathcal{F}_{\alpha,T,R}\ge 1\\
\label{PropTestKerEqnc}
& 2\nabla^2_{\Sigma_t}\log\Ker_{\alpha,T,R}+2HA+H^2g_t\ge C_5\emph{Id}.
\end{align}
\end{lem}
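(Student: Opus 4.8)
The plan is to write $\varphi = \log\Ker_{\alpha,T,R}$, so that by (\ref{TestKerEqn})
\[
\varphi(x,t) = 2\abs{x}^2 + 2\alpha(T-t)\left(\abs{x}^{1+\delta} - R^{1+\delta}\right) =: \varphi_0(x) + \varphi_1(x,t),
\]
and, since $\Ker_{\alpha,T,R} = e^{\varphi}$, to work throughout from
\[
\mathcal{F}_{\alpha,T,R} = \frac{d\varphi}{dt} - \Delta_{\Sigma_t}\varphi - \abs{\nabla_{\Sigma_t}\varphi}^2,
\qquad
\nabla^2_{\Sigma_t}\log\Ker_{\alpha,T,R} = \nabla^2_{\Sigma_t}\varphi.
\]
The only geometric input needed is a package of scale-invariant estimates on $\Sigma_t\setminus B_R$, valid for all large $R$: by Lemma \ref{GeomLem}, $\abs{\nabla^i A}\le C_1\abs{x}^{-1-i}$ for $0\le i\le 2$; from $H = \text{tr}\,A$ this gives $\abs{H}\le nC_1\abs{x}^{-1}$; and then, since $H = \la x,\textbf{n}\ra/(2t)$ on $\Sigma_t$, the normal component of the position vector is small, $\abs{\la x,\textbf{n}\ra}\le 2nC_1\,t\,\abs{x}^{-1}$ (and vanishes on $\Sigma_0 = C$). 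I will also use the standard identities converting the operators $\nabla_{\Sigma_t}$, $\nabla^2_{\Sigma_t}$, $\Delta_{\Sigma_t}$ and the total time derivative (along $\partial_t x = H\textbf{n}$) of a radial function $f(\abs{x})$ restricted to $\Sigma_t$ into their Euclidean counterparts, modulo terms carrying a factor of $A$, $\nabla A$ or $\la x,\textbf{n}\ra$ --- all of which, by the estimates above, are lower order in $\abs{x}$.

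The inequalities (\ref{PropTestKerEqna}) and (\ref{PropTestKerEqnc}) are straightforward and need no restriction on $\alpha$. For (\ref{PropTestKerEqna}) I evaluate at $t = T$, where $\varphi_1\equiv 0$ and $\varphi(\cdot,T) = 2\abs{x}^2$: then $-\abs{\nabla_{\Sigma_T}\varphi}^2 = -16(\abs{x}^2 - \la x,\textbf{n}\ra^2)$ is the dominant term, $-\Delta_{\Sigma_T}\varphi = -4n + O(\abs{x}^{-2})$, and $\tfrac{d\varphi}{dt}$ at $t=T$ equals $-2\alpha(\abs{x}^{1+\delta} - R^{1+\delta}) + 4H\la x,\textbf{n}\ra$, whose first term is $\le 0$ for $\abs{x}>R$ and whose second is $2\la x,\textbf{n}\ra^2/T \le C\abs{x}^{-2}$; hence $\mathcal{F}_{\alpha,T,R}(x,T)\le -16\abs{x}^2 - 4n + C\abs{x}^{-2} < 0$ once $R_4$ is fixed large. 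For (\ref{PropTestKerEqnc}) the key point is that the Euclidean Hessian of $\abs{x}^{1+\delta}$ is nonnegative --- its radial eigenvalue is $\delta(1+\delta)\abs{x}^{\delta-1}>0$, because $1+\delta-2 = \delta-1 < 0$ --- so $\nabla^2_{\Real^{n+1}}\varphi_1 \ge 2\alpha(T-t)\,\delta(1+\delta)\abs{x}^{\delta-1}\,\text{Id}$ and this positive term dominates, uniformly in $\alpha$ and for $R$ large, the $A$-correction that $\varphi_1$ acquires on passing to $\nabla^2_{\Sigma_t}$; meanwhile $\nabla^2_{\Real^{n+1}}\varphi_0 = 4\,\text{Id}$ gives $8g_t$ up to an $O(\abs{\la x,\textbf{n}\ra}\abs{A}) = O(\abs{x}^{-2})$ correction, and $2HA + H^2 g_t = H^2 g_t + O(\abs{x}^{-2})\ge -C\abs{x}^{-2}\,\text{Id}$; so $2\nabla^2_{\Sigma_t}\log\Ker_{\alpha,T,R} + 2HA + H^2 g_t\ge (8 - C\abs{x}^{-2})\,\text{Id}\ge 4\,\text{Id}$ for $R > R_4$, and $C_5 = 4$ works.

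Part (\ref{PropTestKerEqnb}) is the heart of the matter. Since $\nabla_{\Real^{n+1}}\varphi = \beta x$ with $\beta = 4 + 2\alpha(T-t)(1+\delta)\abs{x}^{\delta-1}$, expanding and using the geometric bounds yields
\[
\mathcal{F}_{\alpha,T,R} = -16\abs{x}^2 - 2\alpha\left(\abs{x}^{1+\delta} - R^{1+\delta}\right) - 16\alpha(T-t)(1+\delta)\abs{x}^{1+\delta} - 4\alpha^2(T-t)^2(1+\delta)^2\abs{x}^{2\delta} + E,
\]
where the second term is nonpositive and $\abs{E}\le C\left(1 + \alpha(T-t)\abs{x}^{\delta-1} + \alpha^2(T-t)^2\abs{x}^{2\delta-2}\right)$. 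Applying $\tfrac{d}{dt} + \Delta_{\Sigma_t} + H^2$: the total time derivative produces two positive terms, $+16\alpha(1+\delta)\abs{x}^{1+\delta}$ (from differentiating the third term in $t$; note it carries no surviving $(T-t)$ factor, so it is present even at $t=T$) and $+8\alpha^2(T-t)(1+\delta)^2\abs{x}^{2\delta}$; also $\Delta_{\Sigma_t}(-16\abs{x}^2) = -32n + O(\abs{x}^{-2})$; and every remaining contribution --- from $\Delta_{\Sigma_t}$ applied to the other terms and to $E$, from $H^2\mathcal{F}_{\alpha,T,R}$, and from the normal-motion part of $\tfrac{d}{dt}$, which acts only through the small factor $\la x,\textbf{n}\ra$ --- is bounded in absolute value by $C\left(1 + \alpha\abs{x}^{\delta-1} + \alpha^2(T-t)\abs{x}^{2\delta-2}\right)$, with every $\alpha^2$-error still carrying a factor of at least $(T-t)$ (using $(T-t)^2\le T-t$). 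One then absorbs the $\alpha^2(T-t)\abs{x}^{2\delta-2}$ errors into $8\alpha^2(T-t)(1+\delta)^2\abs{x}^{2\delta}\ge 8\alpha^2(T-t)(1+\delta)^2 R^2\abs{x}^{2\delta-2}$, the $\alpha\abs{x}^{\delta-1}$ errors into $16\alpha(1+\delta)\abs{x}^{1+\delta} = 16\alpha(1+\delta)\abs{x}^2\abs{x}^{\delta-1}$, and the leftover $O(1)$ (together with $-32n$) by taking $\alpha_0$ large; this gives $\tfrac{d}{dt}\mathcal{F}_{\alpha,T,R} + \Delta_{\Sigma_t}\mathcal{F}_{\alpha,T,R} + H^2\mathcal{F}_{\alpha,T,R}\ge 1$.

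I expect the genuine difficulty to be the bookkeeping in (\ref{PropTestKerEqnb}): each error term must be classified by its power of $\alpha$ (either $1$ or $2$) and its power of $(T-t)$, and one must check that the $\alpha^2$-errors --- which all vanish at $t=T$, exactly where the only positive term of comparable strength, $+8\alpha^2(T-t)(1+\delta)^2\abs{x}^{2\delta}$, also vanishes --- are dominated uniformly over all of $t\in[0,T]$, and that the successive choice of $R_4$ (depending on $n$, $C_1$, $\delta$) and then $\alpha_0$ (depending on $n$, $C_1$, $\delta$, $R_4$) is legitimate simultaneously for every $\alpha\ge\alpha_0$, $R > R_4$ and $T\in(0,1]$. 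That Lemma \ref{GeomLem} is stated up to second covariant derivatives is precisely what keeps the Euclidean-to-$\Sigma_t$ conversion errors --- which involve $\nabla A$ and $\nabla^2 A$ --- lower order.
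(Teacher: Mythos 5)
Your proof is correct and takes essentially the same route as the paper: write $\mathcal{F}_{\alpha,T,R} = \frac{d\varphi}{dt}-\Delta_{\Sigma_t}\varphi-\abs{\nabla_{\Sigma_t}\varphi}^2$ for $\varphi=\log\Ker_{\alpha,T,R}$, use the self-shrinker identities $\la x,\textbf{n}\ra=2tH$ and $\abs{A}\le C_1\abs{x}^{-1}$ to convert intrinsic operators to Euclidean ones up to controlled errors, isolate the leading negative square $-\bigl[2\alpha(1+\delta)(T-t)\abs{x}^\delta+4\abs{x}\bigr]^2$, and observe that applying $\tfrac{d}{dt}+\Delta_{\Sigma_t}+H^2$ produces the two dominant positive terms $16\alpha(1+\delta)\abs{x}^{1+\delta}$ (surviving at $t=T$) and $8\alpha^2(1+\delta)^2(T-t)\abs{x}^{2\delta}$, exactly as in the paper's display (3.33)--(3.34). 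The key structural observation you flag --- that every $\alpha^2$ error carries at least one factor of $(T-t)$ and is thus absorbable into the $\alpha^2(T-t)\abs{x}^{2\delta}$ term, while the $\alpha^1$ errors of size $\alpha\abs{x}^{\delta-1}$ are absorbed into the $\alpha\abs{x}^{1+\delta}$ term --- is precisely the reason the paper's computation closes, and your identification of $\nabla^2_{\Sigma_t}\abs{x}^{1+\delta}\geq 0$ via the radial eigenvalue $\delta(1+\delta)\abs{x}^{\delta-1}>0$ reproduces the paper's (3.45). The only cosmetic difference is that the paper writes out the full expansion (3.33) term by term, whereas you collect the subleading terms into an error $E$ with a stated bound and argue structurally; both lead to the same choice of large $R_4$ then large $\alpha_0$.
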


\begin{proof}
To prove the first property (\ref{PropTestKerEqna}), we compute $\mathcal{F}_{\alpha,T,R}$ explicitly as follows. First, note that,
\begin{equation}
\label{PropTestKerEqn1}
\mathcal{F}_{\alpha,T,R}=\frac{d}{dt}\log\Ker_{\alpha,T,R}
-\Delta_{\Sigma_t}\log\Ker_{\alpha,T,R}-\abs{\nabla_{\Sigma_t}\log\Ker_{\alpha,T,R}}^2.
\end{equation}
Then, we calculate each term on the right hand side of the equation (\ref{PropTestKerEqn1}) accordingly. Since $\Sigma$ is a self-shrinker of the mean curvature flow, we have that, for $x\in\Sigma_t$ and $0<t\le 1$,
\begin{equation}
\label{PropTestKerEqn2}
\partial_t x=H\textbf{n}
=\frac{\la x,\textbf{n}\ra}{2t}\ \textbf{n}.
\end{equation}
Thus,
\begin{equation}
\label{PropTestKerEqn3}
\begin{split}
&\frac{d}{dt}\log\Ker_{\alpha,T,R}\\
=&-2\alpha\left(\abs{x}^{1+\delta}-R^{1+\delta}\right)+2\alpha(1+\delta)(T-t)\abs{x}^{\delta-1}\left\la x,\partial_t x\right\ra+4\left\la x,\partial_t x\right\ra\\
=&-2\alpha\left(\abs{x}^{1+\delta}-R^{1+\delta}\right)+2\alpha(1+\delta)(T-t)\abs{x}^{\delta-1}H\la x,\textbf{n}\ra+4H\la x,\textbf{n}\ra\\
=&-2\alpha\left(\abs{x}^{1+\delta}-R^{1+\delta}\right)
+4\alpha(1+\delta)t(T-t)\abs{x}^{\delta-1}H^2
+8tH^2.
\end{split}
\end{equation}
Note that,
\begin{equation}
\label{PropTestKerEqn4}
\nabla_{\Sigma_t}\abs{x}^{\beta}
=\beta\abs{x}^{\beta-2}x^T,
\end{equation}
where $x^T$ is the tangential part of $x\in\Sigma_t$. Thus, by the equation (\ref{PropTestKerEqn2}),
\begin{equation}
\label{PropTestKerEqn5}
\begin{split}
\abs{\nabla_{\Sigma_t}\log\Ker_{\alpha,T,R}}^2
&=\left[2\alpha(1+\delta)(T-t)\abs{x}^{\delta-1}+4\right]^2\abs{x^T}^2\\
&=\left[2\alpha(1+\delta)(T-t)\abs{x}^{\delta-1}+4\right]^2\left(\abs{x}^2-4t^2H^2\right).
\end{split}
\end{equation}
It follows from (\ref{PropTestKerEqn2}) that
\begin{equation}
\label{PropTestKerEqn6}
\text{div}_{\Sigma_t}x^T=n-\la x,\textbf{n}\ra \text{div}_{\Sigma_t}\textbf{n}=n-2tH^2.
\end{equation}
Thus, by (\ref{PropTestKerEqn2}), (\ref{PropTestKerEqn4}) and (\ref{PropTestKerEqn6}), 
\begin{equation}
\label{PropTestKerEqn7}
\begin{split}
\Delta_{\Sigma_t}\abs{x}^\beta
&=\text{div}_{\Sigma_t}\left(\beta\abs{x}^{\beta-2}x^T\right)\\
&=\beta(\beta-2)\abs{x}^{\beta-4}\abs{x^T}^2
+\beta\abs{x}^{\beta-2}\text{div}_{\Sigma_t}x^T\\
&=\beta(\beta-2+n)\abs{x}^{\beta-2}-2\beta t\abs{x}^{\beta-2}H^2
-4\beta(\beta-2)t^2\abs{x}^{\beta-4}H^2.
\end{split}
\end{equation}
Hence, (\ref{PropTestKerEqn7}) with $\beta=1+\delta$ or $2$ gives that
\begin{equation}
\label{PropTestKerEqn8}
\begin{split}
&\Delta_{\Sigma_t}\log\Ker_{\alpha,T,R}\\
=&2\alpha(T-t)\Delta_{\Sigma_t}\abs{x}^{1+\delta}
+2\Delta_{\Sigma_t}\abs{x}^2\\
=&2\alpha(1+\delta)(\delta-1+n)(T-t)\abs{x}^{\delta-1}
-4\alpha(1+\delta)t(T-t)\abs{x}^{\delta-1}H^2\\
&+8\alpha(1-\delta^2)t^2(T-t)\abs{x}^{\delta-3}H^2
+4n-8tH^2.
\end{split}
\end{equation}
Therefore, combining the equations (\ref{PropTestKerEqn3}), (\ref{PropTestKerEqn5}) and (\ref{PropTestKerEqn8}), we conclude that
\begin{equation}
\label{PropTestKerEqn9}
\begin{split}
&\mathcal{F}_{\alpha,T,R}(x,t)\\
=&-2\alpha\left(\abs{x}^{1+\delta}-R^{1+\delta}\right)
-2\alpha(1+\delta)(\delta-1+n)(T-t)\abs{x}^{\delta-1}-4n\\
&-\left[2\alpha(1+\delta)(T-t)\abs{x}^{\delta}+4\abs{x}\right]^2
+16tH^2\\
&+8\alpha(1+\delta)t(T-t)\abs{x}^{\delta-1}H^2
-8\alpha(1-\delta^2)t^2(T-t)\abs{x}^{\delta-3}H^2\\
&+4t^2\left[2\alpha(1+\delta)(T-t)\abs{x}^{\delta-1}+4\right]^2H^2.
\end{split}
\end{equation}
In particular, by (\ref{PropTestKerEqn9}) and Lemma \ref{GeomLem}, there exists $\lam_1>0$, depending only on $C_1$ and $n$, such that: when $t=T$, $R>2R_2$ and $x\in\Sigma_T\setminus B_R$, 
\begin{equation}
\label{PropTestKerEqn10}
\mathcal{F}_{\alpha,T,R}(x,T)<-4n-16\abs{x}^2
+16tH^2+64t^2H^2\le -\lam_1R^{2}.
\end{equation}

Next, we compute $d\mathcal{F}_{\alpha,T,R}/dt$ and $\Delta_{\Sigma_t}\mathcal{F}_{\alpha,T,R}$ below. Namely,
\begin{equation}
\label{PropTestKerEqn11}
\begin{split}
\frac{d}{dt}\mathcal{F}_{\alpha,T,R}
=&-4\alpha(1+\delta)t\abs{x}^{\delta-1}H^2
+2\alpha(1+\delta)(\delta-1+n)\abs{x}^{\delta-1}\\
&+4\alpha(1+\delta)(\delta-1+n)(1-\delta)t(T-t)\abs{x}^{\delta-3}H^2\\
&+8\left[\alpha(1+\delta)(T-t)\abs{x}^\delta+2\abs{x}\right]
[\alpha(1+\delta)\abs{x}^\delta\\
&-2\alpha\delta(1+\delta)t(T-t)\abs{x}^{\delta-2}H^2
-4t\abs{x}^{-1}H^2]\\
&+16H^2+32tH\partial_t H
+8\alpha(1+\delta)(T-2t)\abs{x}^{\delta-1}H^2\\
&-16\alpha(1-\delta^2)t^2(T-t)\abs{x}^{\delta-3}H^4\\
&+16\alpha(1+\delta)t(T-t)\abs{x}^{\delta-1}H\partial_t H\\
&-8\alpha(1-\delta^2)(2Tt-3t^2)\abs{x}^{\delta-3}H^2\\
&+16\alpha(1-\delta^2)(3-\delta)t^3(T-t)\abs{x}^{\delta-5}H^4\\
&-16\alpha(1-\delta^2)t^2(T-t)\abs{x}^{\delta-3}H\partial_t H\\
&+8tH\left(H+t\partial_t H\right)
\left[2\alpha(1+\delta)(T-t)\abs{x}^{\delta-1}+4\right]^2\\
&-32t^2H^2\left[\alpha(1+\delta)(T-t)\abs{x}^{\delta-1}+2\right]
[\alpha(1+\delta)\abs{x}^{\delta-1}\\
&+2\alpha(1-\delta^2)t(T-t)\abs{x}^{\delta-3}H^2].
\end{split}
\end{equation}
Here $\partial_t H$ means the derivative with respect to a point moving perpendicularly to the hypersurface. Note that, by the computation in the appendix B of \cite{Ec},
\begin{equation}
\label{PropTestKerEqn12}
\partial_t H+\Delta_{\Sigma_t}H=-\abs{A}^2H.
\end{equation}
Thus, it follows from Lemma \ref{GeomLem} that, for $x\in\Sigma_t\setminus B_{2R_2}$ and $0<t\le 1$, 
\begin{equation}
\label{PropTestKerEqn13}
\abs{\partial_t H}\le\lam_2\abs{x}^{-3},
\end{equation}
where $\lam_2>0$ depends only on $C_1$. Hence, by Lemma \ref{GeomLem}, (\ref{PropTestKerEqn11}) and (\ref{PropTestKerEqn13}), there exist $\alpha_1>1$ and $r_1>2R_2$, depending only on $C_1$ and $R_2$, such that for $\alpha>\alpha_1$, $R>r_1$ and $x\in\Sigma_t\setminus B_R$,
\begin{equation}
\label{PropTestKerEqn14}
\frac{d}{dt}\mathcal{F}_{\alpha,T,R}(x,t)\ge 4\alpha(1+\delta)\abs{x}^{1+\delta}\left[\alpha(1+\delta)(T-t)\abs{x}^{\delta-1}+2\right].
\end{equation}
Similarly, we can estimate $\Delta_{\Sigma_t}\mathcal{F}_{\alpha,T,R}$. Note that,
\begin{equation}
\label{PropTestKerEqn15}
\Delta_{\Sigma_t}H^2=2H\Delta_{\Sigma_t}H+2\abs{\nabla_{\Sigma_t}H}^2,
\end{equation}
and for $\beta\neq 0$, 
\begin{equation}
\Delta_{\Sigma_t}(\abs{x}^\beta H^2)
=H^2\Delta_{\Sigma_t}\abs{x}^\beta
+2\left\la\nabla_{\Sigma_t}H^2,\nabla_{\Sigma_t}\abs{x}^\beta\right\ra
+\abs{x}^\beta\Delta_{\Sigma_t}H^2.
\end{equation}
Assuming that $R>2R_2$ and $x\in\Sigma_t\setminus B_R$, then Lemma \ref{GeomLem}, (\ref{PropTestKerEqn4}) and (\ref{PropTestKerEqn7}) give that, at $x$,
\begin{equation}
\label{PropTestKerEqn16}
\abs{\Delta_{\Sigma_t}H^2}\le\lam_3\abs{x}^{-4}\quad\text{and}\quad
\abs{\Delta_{\Sigma_t}\left(\abs{x}^\beta H^2\right)}\le\lam_4\abs{x}^{\beta-4},
\end{equation}
where $\lam_3>0$ depends only on $C_1$ and $\lam_4>0$ depends on $C_1$, $n$ and $\beta$. Since each term in $\mathcal{F}_{\alpha,T,R}$ is either $\abs{x}^\beta$, $H^2$, or $\abs{x}^\beta H^2$, a straightforward computation and (\ref{PropTestKerEqn16}) give that, for $x\in\Sigma_t\setminus B_{2R_2}$,
\begin{equation}
\label{PropTestKerEqn17}
\abs{\Delta_{\Sigma_t}\mathcal{F}_{\alpha,T,R}}\le \lam_5\alpha^2(T-t)\abs{x}^{2\delta-2}+\lam_5\alpha,
\end{equation}
where $\lam_5>0$ depends only on $C_1$, $R_2$, $\lam_3$ and $\lam_4$.
Also, for $x\in\Sigma_t\setminus B_{2R_2}$, it follows from Lemma \ref{GeomLem} that
\begin{equation}
\label{PropTestKerEqn18}
\abs{H^2\mathcal{F}_{\alpha,T,R}}\le\lam_6\alpha^2(T-t)\abs{x}^{2\delta-2}+\lam_6\alpha,
\end{equation}
where $\lam_6>0$ depends on $R_2$ and $C_1$. Hence, the second property (\ref{PropTestKerEqnb}) is verified by (\ref{PropTestKerEqn14}), (\ref{PropTestKerEqn17}) and (\ref{PropTestKerEqn18}), when $t\in (0,1]$.

Finally, we estimate $\nabla^2_{\Sigma_t}\abs{x}^\beta$ for $\beta\in (1,2]$. Fix $x\in\Sigma_t$, $0<t\le 1$. We choose a local geodesic orthonormal frame $\{e_1,\dots,e_n\}$ of $\Sigma_t$ at $x$. Thus, (\ref{PropTestKerEqn2}) gives that, at $x$,
\begin{equation}
\label{PropTestKerEqn19}
\begin{split}
&\nabla^2_{\Sigma_t}\abs{x}^\beta (e_i,e_j)
=\left\la\nabla_{e_i}\nabla_{\Sigma_t}\abs{x}^\beta,e_j\right\ra
=\beta\left\la\nabla_{e_i}\left(\abs{x}^{\beta-2}x^T\right),e_j\right\ra\\
=&\beta\left\la\nabla_{e_i}\left(\abs{x}^{\beta-2}x\right),e_j\right\ra
-\beta\abs{x}^{\beta-2}\left\la x,\textbf{n}\right\ra
\left\la\nabla_{e_i}\textbf{n},e_j\right\ra\\
=&\beta\left\la D\abs{x}^{\beta-2} ,e_i\right\ra\left\la x,e_j\right\ra
+\beta\abs{x}^{\beta-2}\left\la\nabla_{e_i}x,e_j\right\ra
+2\beta t\abs{x}^{\beta-2}HA(e_i,e_j)\\
=&\beta(\beta-2)\abs{x}^{\beta-4}\left\la x,e_i\right\ra\left\la x,e_j\right\ra
+\beta\abs{x}^{\beta-2}\delta_{ij}
+2\beta t\abs{x}^{\beta-2}HA(e_i,e_j).
\end{split}
\end{equation}

Now, given $\eta\in T_x\Sigma_t$, $\eta=\sum_k \eta_ke_k$ and at $x$, it follows from (\ref{PropTestKerEqn19}) that
\begin{equation}
\label{PropTestKerEqn20}
\begin{split}
&\nabla_{\Sigma_t}^2\abs{x}^{\beta}(\eta,\eta)\\
\ge & \beta\abs{x}^{\beta-4}\sum_{i,j}
\left[(\beta-2)\la x,e_i\ra\la x,e_j\ra
+\abs{x^T}^2\delta_{ij}\right]\eta_i\eta_j
+2\beta t\abs{x}^{\beta-2}HA(\eta,\eta)\\
=&\beta(\beta-2)\abs{x}^{\beta-4}\left(\sum_i \la x,e_i\ra\eta_i\right)^2+\beta\abs{x}^{\beta-4}\abs{x^T}^2\abs{\eta}^2
+2\beta t\abs{x}^{\beta-2}HA(\eta,\eta)\\
\ge & \beta(\beta-1)\abs{x}^{\beta-4}\abs{x^T}^2\abs{\eta}^2
+2\beta t\abs{x}^{\beta-2}HA(\eta,\eta)\\
=&\beta(\beta-1)\abs{x}^{\beta-2}\abs{\eta}^2
-4\beta(\beta-1)t^2\abs{x}^{\beta-4}H^2\abs{\eta}^2
+2\beta t\abs{x}^{\beta-2}HA(\eta,\eta).
\end{split}
\end{equation}
Hence, by Lemma \ref{GeomLem} and the assumption that $\beta\in (1,2]$, there exists $r_2>2R_2$, depending only on $\beta$, $C_1$ and $R_2$, such that for $x\in\Sigma_t\setminus B_{r_2}$, 
\begin{equation}
\label{PropTestKerEqn21}
\nabla_{\Sigma_t}^2\abs{x}^\beta(\eta,\eta)
\ge\frac{1}{2}\beta(\beta-1)\abs{x}^{\beta-2}\abs{\eta}^2.
\end{equation}

Hence, the third property (\ref{PropTestKerEqnc}) follows immediately from (\ref{PropTestKerEqn21}) with $\beta=1+\delta$ or $2$, and Lemma \ref{GeomLem}, when $t\in (0,1]$. Note that $\{\Sigma_t\setminus B_{2R_2}\}_{t\in [0,1]}$ is one parameter smooth (even at $t=0$) family of hypersurfaces, and $\Ker_{\alpha,T,R}$ and $\mathcal{F}_{\alpha,T,R}$ are smooth functions on $Q_{R,T}$. Therefore, the second and third properties in this lemma hold true for $t=0$.
\end{proof}

Finally, combining Lemma \ref{IntelKeyIdLem} and Lemma \ref{PropTestKerLem}, we establish the following Carleman inequality.

\begin{prop}
\label{CarlemanProp}
Let $\alpha>\alpha_0$, $0<T\le 1$, $R>R_4$ and $Q_{R,R}$ as in Lemma \ref{IntelKeyIdLem}. Assume that $\phi,\nabla_{\Sigma_t}\phi\in C^0_c(Q_{R,T})$ with $\phi(\cdot,0)\equiv 0$ and $\phi$ is smooth in the interior of $Q_{R,T}$. Then
\begin{equation}
 \label{CarlemanEqn}
\begin{split}
& \int_0^T\int_{\Sigma_t}\left(\abs{\phi}^2+C_5\abs{\nabla_{\Sigma_t}\phi}^2\right)\Ker_{\alpha,T,R} d\mu_t dt\\
\le & \int_0^T\int_{\Sigma_t}\left(\frac{d\phi}{dt}+\Delta_{\Sigma_t}\phi\right)^2\Ker_{\alpha,T,R}d\mu_t dt+\int_{\Sigma_T}\abs{\nabla_{\Sigma_T}\phi}^2\Ker_{\alpha,T,R}d\mu_T.
\end{split}
\end{equation}
\end{prop}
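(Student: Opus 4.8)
The plan is to specialize the integral identity of Lemma~\ref{IntelKeyIdLem} to the explicit weight $\Ker=\Ker_{\alpha,T,R}$ from (\ref{TestKerEqn}) and then read off the Carleman inequality from the three pointwise bounds in Lemma~\ref{PropTestKerLem}. Since $\Ker_{\alpha,T,R}$ is manifestly smooth and positive on $Q_{R,T}$ (on $\Sigma_t\setminus\bar{B}_R$ one has $\abs{x}\ge R>0$, so the factor $\abs{x}^{1+\delta}$ is smooth), the quotient $\mathcal{F}_{\alpha,T,R}$ is well defined and smooth there, and the hypotheses imposed on $\phi$ in Proposition~\ref{CarlemanProp} are exactly those required to invoke Lemma~\ref{IntelKeyIdLem}. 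So the identity (\ref{IntelKeyIdEqn}) holds with $\Ker=\Ker_{\alpha,T,R}$ and $\mathcal{F}=\mathcal{F}_{\alpha,T,R}$.

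First I would bound the left-hand side of (\ref{IntelKeyIdEqn}) from below. For the quadratic-form term, (\ref{PropTestKerEqnc}) gives $\left(2\nabla^2_{\Sigma_t}\log\Ker_{\alpha,T,R}+2HA+H^2g_t\right)(\nabla_{\Sigma_t}\phi,\nabla_{\Sigma_t}\phi)\ge C_5\abs{\nabla_{\Sigma_t}\phi}^2$, so that term is at least $C_5\int_0^T\int_{\Sigma_t}\abs{\nabla_{\Sigma_t}\phi}^2\Ker_{\alpha,T,R}\,d\mu_t\,dt$. For the zeroth-order term, (\ref{PropTestKerEqnb}) gives $\tfrac{d}{dt}\mathcal{F}_{\alpha,T,R}+\Delta_{\Sigma_t}\mathcal{F}_{\alpha,T,R}+H^2\mathcal{F}_{\alpha,T,R}\ge 1$; in fact, as the estimate for $\tfrac{d}{dt}\mathcal{F}_{\alpha,T,R}$ in the proof of Lemma~\ref{PropTestKerLem} shows, after enlarging $\alpha_0$ and $R_4$ this quantity is $\ge 2$ throughout $Q_{R,T}$, so that term is at least $\int_0^T\int_{\Sigma_t}\abs{\phi}^2\Ker_{\alpha,T,R}\,d\mu_t\,dt$. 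Hence the left side of (\ref{IntelKeyIdEqn}) dominates $\int_0^T\int_{\Sigma_t}\left(\abs{\phi}^2+C_5\abs{\nabla_{\Sigma_t}\phi}^2\right)\Ker_{\alpha,T,R}\,d\mu_t\,dt$, the left side of (\ref{CarlemanEqn}).

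It then remains to control the right-hand side. The bulk term $\int_0^T\int_{\Sigma_t}\left(\tfrac{d\phi}{dt}+\Delta_{\Sigma_t}\phi\right)^2\Ker_{\alpha,T,R}\,d\mu_t\,dt$ is already of the desired shape. For the boundary term at $t=T$, namely $\int_{\Sigma_T}\left(\abs{\phi}^2\mathcal{F}_{\alpha,T,R}+\abs{\nabla_{\Sigma_T}\phi}^2\right)\Ker_{\alpha,T,R}\,d\mu_T$, property (\ref{PropTestKerEqna}) says $\mathcal{F}_{\alpha,T,R}(\cdot,T)<0$ on $\Sigma_T\setminus B_R$, so $\abs{\phi}^2\mathcal{F}_{\alpha,T,R}\le 0$ there and this contribution may simply be discarded, leaving $\int_{\Sigma_T}\abs{\nabla_{\Sigma_T}\phi}^2\Ker_{\alpha,T,R}\,d\mu_T$; there is no boundary term at $t=0$ since $\phi(\cdot,0)\equiv 0$ has already been used in Lemma~\ref{IntelKeyIdLem}. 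Chaining these inequalities gives (\ref{CarlemanEqn}). All the analytic content has been front-loaded into Lemmas~\ref{IntelKeyIdLem} and~\ref{PropTestKerLem}, so there is no genuine obstacle in this last step; the only point requiring a little care is the matching of constants in the zeroth-order estimate (one wants the coefficient of $\int\int\abs{\phi}^2\Ker_{\alpha,T,R}$ to be at least $1$), which is handled by the slack in (\ref{PropTestKerEqnb})---or one could simply restate (\ref{CarlemanEqn}) with a fixed positive constant in front of $\abs{\phi}^2$.
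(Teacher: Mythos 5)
Your argument is correct and is precisely the paper's (unwritten) proof: the text merely asserts that Proposition~\ref{CarlemanProp} follows by ``combining Lemma~\ref{IntelKeyIdLem} and Lemma~\ref{PropTestKerLem}'', and your chain of estimates is exactly that combination, with (\ref{PropTestKerEqnc}) giving the gradient term, (\ref{PropTestKerEqnb}) giving the $\abs{\phi}^2$ term, and (\ref{PropTestKerEqna}) letting you drop the $\abs{\phi}^2\mathcal{F}$ part of the $t=T$ boundary integral. You also correctly isolate the one point of genuine care: the factor $\tfrac{1}{2}$ on the zeroth-order term in (\ref{IntelKeyIdEqn}) means the bound $\ge 1$ in (\ref{PropTestKerEqnb}), taken literally, yields only $\tfrac{1}{2}\abs{\phi}^2$ on the left of (\ref{CarlemanEqn}); either strengthening (\ref{PropTestKerEqnb}) to $\ge 2$ (which the estimate (\ref{PropTestKerEqn14}) gives with room to spare after enlarging $\alpha_0$, $R_4$) or restating (\ref{CarlemanEqn}) with a fixed positive constant in front of $\abs{\phi}^2$ resolves this, and neither change affects the subsequent application in the proof of Theorem~\ref{UniqueThm}.
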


To conclude the proof of Theorem \ref{UniqueThm} by applying Proposition \ref{CarlemanProp}, we need to study the decay rates of $\abs{w}$ and its gradient $\abs{\nabla_{\Sigma_t}w}$.

\begin{lem}
\label{DecayLem}
There exist $R_5>R_3$ and $M>0$ such that for $x\in \Sigma_t\setminus B_{R_5}$ and $t\in [0,1]$,
\begin{equation}
 \label{DecayEqn}
\abs{w(x,t)}+\abs{\nabla_{\Sigma_t}w(x,t)}
\le\exp\left(-\frac{M\abs{x}^2}{t}\right).
\end{equation} 
\end{lem}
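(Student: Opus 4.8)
The plan is first to remove the time variable. By the definition $w(x,t)=\sqrt t\,v(x/\sqrt t)$ and the scaling of the induced metrics on $\Sigma_t=\sqrt t\,\Sigma$, one has $\nabla_{\Sigma_t}w(x,t)=(\nabla_\Sigma v)(x/\sqrt t)$; moreover, if $x\in\Sigma_t\setminus B_{R_5}$ and $y:=x/\sqrt t\in\Sigma$, then $|y|\ge|x|\ge R_5$ and $|y|^2=|x|^2/t$. Since $0<t\le1$,
\[
|w(x,t)|+|\nabla_{\Sigma_t}w(x,t)|=\sqrt t\,|v(y)|+|\nabla_\Sigma v(y)|\le|v(y)|+|\nabla_\Sigma v(y)|,
\]
so it suffices to prove that there are $R_5>R_3$ and $M>0$ with $|v(y)|+|\nabla_\Sigma v(y)|\le e^{-M|y|^2}$ for all $y\in\Sigma\setminus B_{R_5}$. (The case $t=1$ shows this is also necessary.) Thus Lemma~\ref{DecayLem} is equivalent to Gaussian decay of the graph function $v$ itself.

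\textbf{Step 2: upgrading polynomial to faster decay via the drift.}
By Lemma~\ref{GraphLem} we start from polynomial decay $|\nabla^i_\Sigma v(x)|\le C_3|x|^{-1-i}$ for $0\le i\le2$, and by Lemma~\ref{JacobiLem}, $v$ satisfies
\[
\Delta_\Sigma v-\tfrac12\langle x,\nabla_\Sigma v\rangle+(|A|^2+\tfrac12)v=-Q,\qquad |Q|\le C_4|x|^{-2}\bigl(|v|+|\nabla_\Sigma v|\bigr),
\]
where, by Lemma~\ref{GraphLem}, $Q$ may be regarded as a \emph{known} inhomogeneity of size $\le C_4|x|^{-2}(|v|+|\nabla_\Sigma v|)$. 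The key structural point is the outward drift $-\tfrac12\langle x,\nabla_\Sigma v\rangle$: after rescaling about the sphere of radius $\rho$, that is, setting $\hat\Sigma^\rho=\rho^{-1}\Sigma$ and $\hat v(y)=v(\rho y)$ on a fixed annulus $\{1/2\le|y|\le4\}$, the hypersurface $\hat\Sigma^\rho$ converges smoothly to the cone $C$ (so $|A_{\hat\Sigma^\rho}|$ and $\langle y,\mathbf n\rangle$ are controlled, the latter tending to $0$), while $\hat v$ solves an elliptic equation whose drift coefficient is of order $\rho^2$. Solving the associated radial balance $\langle x,\nabla_\Sigma v\rangle-v=g$ (with $g$ built from $\Delta_\Sigma v$ and $Q$, hence of order two higher than $v$), together with interior Schauder estimates on the rescaled annulus, yields the improvement: if $|\nabla^i_\Sigma v|\le C_k\,|x|^{-k-i}$ for $i=0,1,2$ on $\{|x|\ge R_k\}$, then the same holds with $k$ replaced by $k+2$, and with $C_{k+2}\le \Lambda\,k\,C_k$ for a constant $\Lambda$ depending only on $n,\Sigma,\tilde\Sigma,C$. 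One must also propagate the corresponding gradient and Hessian bounds and absorb the contributions of $|A|^2$, $H$ and $Q$, which decay an extra power (Lemmas~\ref{GeomLem}, \ref{PertLem}).

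\textbf{Step 3: summation and conclusion.}
Iterating the improvement from the base case $C_1=C_3$ gives $C_{2j+1}\lesssim \Lambda^j\,j!$, hence
\[
|v(x)|\le\inf_{j}\ \Lambda^j\,j!\,|x|^{-2j}\quad\text{on}\quad \{|x|\ge R_\ast\};
\]
choosing $j$ of order $|x|^2/\Lambda$ and using Stirling's formula collapses the right-hand side to $e^{-M|x|^2}$ for a suitable $M>0$ (and enlarged $R_5$), and interior Schauder on annuli then transfers the Gaussian bound to $|\nabla_\Sigma v|$. Combined with Step~1, this proves the lemma. \emph{The main obstacle} is Steps~2--3: the Gaussian scale is \emph{critical} for the linearized shrinker operator — the natural weight $e^{-|x|^2/4}$ sits exactly at the threshold, so a soft barrier/maximum-principle argument merely recovers boundedness — and therefore one must track the constants $C_k$ (equivalently, the rescaled Schauder constants, which a priori carry powers of the drift size $\rho$) sharply enough that their growth is no worse than $j!$ per pair of orders gained; controlling the lower-order terms and the defect of $\hat\Sigma^\rho$ from $C$ uniformly in $\rho$ is the delicate part. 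An alternative route is the parabolic one: $w$ solves the approximate backward heat inequality of Lemma~\ref{HeatLem} with $w(\cdot,0)\equiv0$, and one may instead run a comparison against backward heat kernels $e^{-|x|^2/(4(t_0-t))}$, again with the criticality of the rate as the essential difficulty.
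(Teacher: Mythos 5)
Your Step 1 (pointwise reduction: $|\nabla_{\Sigma_t}w(x,t)|=|\nabla_\Sigma v(x/\sqrt t)|$, $|x/\sqrt t|^2=|x|^2/t$, so it suffices to get $|v|+|\nabla_\Sigma v|\le e^{-M|\cdot|^2}$ on $\Sigma$) is correct and clean. From there, however, you sketch an \emph{elliptic bootstrap} --- iterate the equation of Lemma~\ref{JacobiLem} to upgrade $|v|\lesssim|x|^{-k}$ to $|v|\lesssim|x|^{-k-2}$, track the constants $C_k$, and sum via Stirling --- and the entire content of that scheme is the inequality $C_{k+2}\le\Lambda\,k\,C_k$ \emph{with the gradient and Hessian bounds propagated at the same time}. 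You do not prove this; you explicitly call it ``the main obstacle'' and ``the delicate part''. That is precisely the gap: while the radial $L^\infty$ balance $\langle x,\nabla_\Sigma v\rangle - v = 2(\Delta_\Sigma v + |A|^2 v + Q)$ does improve the $L^\infty$ bound by a factor $\sim 1/k$, the passage from $\|v\|_{L^\infty}$ to $\|\nabla_\Sigma v\|_{L^\infty}$ and $\|\nabla^2_\Sigma v\|_{L^\infty}$ uses interior Schauder on the rescaled annulus $\hat\Sigma^\rho\cap\{1\le|y|\le2\}$, where the drift and the zeroth-order coefficient are both of size $\rho^2$ and the zeroth-order term has the ``wrong'' sign; a priori the Schauder constant then carries a positive power of $\rho$, which is lethal for an iteration that must hold uniformly over all large radii. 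Nothing in the proposal rules this out, so the scheme is not established.

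The paper's proof takes a completely different route: it stays in the \emph{parabolic} picture all the way through. Fixing $z_0\in C\setminus B_{2R_3}$ and using the graph functions $u(\cdot,t)$ of Lemma~\ref{PertLem}, it writes $\bar w(p,t)=w(F(p)+u(p,t)\mathbf n(z_0),t)$, derives from Lemma~\ref{JacobiLem} that $|\partial_t\bar w+\operatorname{div}(g_t^{-1}D\bar w)|\lesssim |z_0|^{-2}(|\bar w|+|z_0||D\bar w|)$ with $g_t$ uniformly elliptic, rescales $\bar w_R(q,s)=\bar w(Rq,R^2s)$ with $R\sim|z_0|$, and then applies an \emph{off-the-shelf} quantitative Carleman inequality for parabolic operators with Gaussian weight (Escauriaza--Fern\'andez, as implemented in Nguyen's paper, Lemma 2.1 there), using $\bar w_R(\cdot,0)\equiv 0$ and the a priori bound $|\bar w_R|+|D\bar w_R|\lesssim|z_0|^{-1}$ from Lemma~\ref{GraphLem}. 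Optimizing the free parameter $\alpha\sim R^2$ in the Carleman weight produces the $e^{-c|z_0|^2}$ factor directly, and a local $L^2\to L^\infty$ estimate finishes. This sidesteps the constant-tracking difficulty entirely: the ``criticality'' you worry about is absorbed into a known, already-calibrated Carleman weight rather than into an ad hoc iteration. You gesture at this alternative in your last sentence, but frame it as secondary; it is in fact the proof.

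In short, the proposal identifies the right target (Gaussian decay of $v$) and names the essential difficulty honestly, but does not supply the step that carries the proof. To close the gap you would either have to carry out the elliptic iteration with a genuine, $\rho$-uniform Schauder (or $L^2$) estimate on the rescaled annuli --- including an argument for why the coefficient of size $\rho^2$ does not pollute the constants --- or, as the paper does, pass to the parabolic framework where a ready-made Carleman inequality with the correct scaling is available.
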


\begin{proof}
Fix $z_0\in C\setminus B_{2R_3}$. By Lemma \ref{PertLem}, for each $t\in(0,1]$, the component of $\Sigma_t\cap B_{\eps_0\abs{z_0}}(z_0)$ containing $z_0+U(z_0,t)\textbf{n}(z_0)$ can be written as the graph of a smooth function $u(\cdot,t)$ over the tangent plane $T_{z_0}C$ of $C$ at $z_0$ satisfying the property (\ref{PertEqn}). Moreover, there exists $\delta_1\in (0,\eps_0]$, depending only on $C_2$, such that the image of the orthogonal projection of $\Sigma_t\cap B_{\eps_0\abs{z_0}}(z_0)$ to $T_{z_0}C$, $0<t\le 1$ contains the disk in $T_{z_0}C$ centered at $z_0$ with radius $\delta_1\abs{z_0}$. We parametrize $T_{z_0}C$ by $F:\Real^n\To T_{z_0}C$, $F(p)=z_0+\sum_ip_ie_i$, where $p=(p_1,\dots,p_n)$ and $\{e_1,\dots,e_n\}$ is an orthonormal basis of $T_{z_0}C-z_0$. And we identify $u(p,t)$ with $u(F(p),t)$ and define $\bar{w}(p,t)=w(F(p)+u(p,t)\textbf{n}(z_0),t)$. Let $g_t$ be the pull back metric on $D_{\delta_1\abs{z_0}}$ from $\Sigma_t$ via the map $p\mapsto F(p)+u(p,t)\textbf{n}(z_0)$. In the following, $\partial_t$ denotes the partial derivative with respect to time $t$ fixing $p$. Then, it follows from Lemma \ref{JacobiLem} that, on $D_{\delta_1\abs{z_0}}\times (0,1]$,
\begin{equation}
\label{DecayEqn1}
\begin{split}
&\partial_t\bar{w}+\Delta_{g_t}\bar{w}\\
=&\frac{1}{2\sqrt{t}}\ v\left(\frac{x}{\sqrt{t}}\right)+\sqrt{t}\left\la \nabla_\Sigma v\left(\frac{x}{\sqrt{t}}\right),\frac{1}{\sqrt{t}}\frac{\partial x}{\partial t}-\frac{x}{2t\sqrt{t}}\right\ra
+\frac{1}{\sqrt{t}}\ \Delta_{\Sigma}v\left(\frac{x}{\sqrt{t}}\right)\\
=&\frac{1}{\sqrt{t}}\left[\frac{1}{2}v\left(\frac{x}{\sqrt{t}}\right)-\frac{1}{2}\left\la\nabla_\Sigma v\left(\frac{x}{\sqrt{t}}\right),\frac{x}{\sqrt{t}}\right\ra
+\Delta_{\Sigma}v\left(\frac{x}{\sqrt{t}}\right)\right]\\
&+\left\la\nabla_\Sigma v\left(\frac{x}{\sqrt{t}}\right),\frac{\partial x}{\partial t}\right\ra\\
\end{split}
\end{equation}
\begin{equation*}
=\frac{1}{\sqrt{t}}\left[-\abs{A}^2v\left(\frac{x}{\sqrt{t}}\right)
+Q\left(\frac{x}{\sqrt{t}},v,\nabla_\Sigma v,\nabla^2_\Sigma v\right)\right]
+\left\la\nabla_\Sigma v\left(\frac{x}{\sqrt{t}}\right),\frac{\partial x}{\partial t}\right\ra.
\end{equation*}
Here, we use the backward mean curvature flow equation for graphs, that is, for $x=F(p)+u(p,t)\textbf{n}(z_0)$ and $p\in D_{\delta_1\abs{z_0}}$,
\begin{equation}
\label{DecayEqn2}
\la\partial_t x,\textbf{n}\ra=H,
\end{equation}
and thus equivalently,
\begin{equation}
\label{DecayEqn3}
\partial_t u=-\sqrt{1+\abs{Du}^2}\ \text{div}\left(\frac{Du}{\sqrt{1+\abs{Du}^2}}\right).
\end{equation}
Hence, by the calculation (\ref{DecayEqn1}), Lemmas \ref{GeomLem}, \ref{PertLem} and \ref{JacobiLem}, we have that, on $D_{\delta_1\abs{z_0}}\times (0,1]$,
\begin{equation}
\label{DecayEqn4}
\abs{\partial_t\bar{w}+\Delta_{g_t}\bar{w}}\le\lam_1\abs{z_0}^{-2}\left(\abs{\bar{w}}
+\abs{z_0}\abs{\nabla_{g_t}\bar{w}}\right),
\end{equation} 
where $\lam_1>0$ depends only on $C_1$, $C_2$, $C_4$ and $\delta_1$. It is easy to verify that $g_t$ has the following properties: $g_0(0)=\text{Id}$,
\begin{equation}
\label{DecayEqn5.1}
\lam_2^{-1}\abs{\xi}^2\le \sum_{i,j}g_t^{ij}\xi_i\xi_j\le\lam_2\abs{\xi}^2
\quad\text{for all}\quad \xi=(\xi_1,\dots,\xi_n)\in\Real^n,
\end{equation}
\begin{equation}
\label{DecayEqn5.2}
\abs{Dg^{ij}_t}\le\lam_2\abs{z_0}^{-1}\quad\text{and}\quad
\abs{\partial_tg^{ij}_t}\le\lam_2\abs{z_0}^{-2}\quad \text{for}\quad 1\le i,j\le n,
\end{equation}
where $\lam_2>0$ depends only on $n$ and $C_2$. Thus, (\ref{DecayEqn4}) gives that
\begin{equation}
\label{DecayEqn6}
\abs{\partial_t\bar{w}+\text{div}\left(g^{-1}_tD\bar{w}\right)}\le\lam_3\abs{z_0}^{-2}
\left(\abs{\bar{w}}+\abs{z_0}\abs{D\bar{w}}\right),
\end{equation} 
where $\lam_3>0$ depends only on $\lam_1$ and $\lam_2$.

For the convenience of the readers, we present the arguments in page 2877-2879 of \cite{NgT} here to conclude the proof of Lemma \ref{DecayLem}. Let $R=\delta_1\abs{z_0}/2$ and $\bar{w}_R$ be a rescaling of $\bar{w}$, $\bar{w}_R(q,s)=\bar{w}(Rq,R^2s)$. Then $\bar{w}_R$ satisfies
\begin{equation}
\label{DecayEqn7}
\abs{\partial_s\bar{w}_R+\text{div}\left(g^{-1}_RD\bar{w}_R\right)}
\le\lam_3\left(\abs{\bar{w}_R}+\abs{D\bar{w}_R}\right),
\end{equation}
where $g^{-1}_R(q,s)=(g_R^{ij})=g^{-1}_{R^2s}(Rq)$. It follows from (\ref{DecayEqn5.2}) that, in $D_2\times [0,1/R^2]$,
\begin{equation}
\label{DecayEqn8}
\abs{Dg_R^{ij}}\le\lam_2\quad\text{and}\quad\abs{\partial_sg_R^{ij}}\le\lam_2
\quad\text{for}\quad 1\le i,j\le n.
\end{equation}
Furthermore, by Lemma \ref{GraphLem} and (\ref{DecayEqn5.1}), 
\begin{equation}
\label{DecayEqn9}
\abs{\bar{w}_R}+\abs{D\bar{w}_R}\le\lam_4\abs{z_0}^{-1}
\end{equation}
in $D_2\times [0,1/R^2]$, where $\lam_4>0$ depends only on $C_3$ and $\lam_2$. And $\bar{w}_R,D\bar{w}_R\in C^0(D_2\times [0,1/R^2])$ with $\bar{w}_R(\cdot,0)=0$, and $\bar{w}_R$ is smooth in $D_2\times (0,1/R^2]$. Define $\phi=\psi(q)\eta(s)\bar{w}_R$, where $\chi_{D_1}\le\psi\le\chi_{D_2}$ and $\chi_{[0,1/\alpha]}\le\eta\le\chi_{[0,2/\alpha)}$ are bump functions, and $\alpha\ge 2R^2$ is a positive constant to be chosen. Then
\begin{equation}
\label{DecayEqn10}
\abs{\partial_s\phi+\text{div}\left(g^{-1}_RD\phi\right)}
\le\lam_3\left(\abs{\phi}+\abs{D\phi}\right)+\lam_5\alpha\left(\abs{\bar{w}_R}+\abs{D\bar{w}_R}\right)\chi_E.
\end{equation}
Here $E=\left(D_2\times [0,2/\alpha)\right)\setminus\left(D_1\times [0,1/\alpha]\right)$, and $\lam_5>0$ depends only on $R_3$, $\delta_1$, $\lam_2$ and $\lam_3$. Hence, using the Carleman inequality in \cite{EF} (see also Lemma 2.1 in \cite{NgT}), there exists $M_1>1$, depending only on $n$ and $\lam_2$, such that for $0<a<1/\alpha$,
\begin{equation}
\label{DecayEqn11}
\begin{split}
&\int_{\Real^{n+1}}\left(\alpha^2\phi^2+\alpha\sigma_a\abs{D\phi}^2\right)\sigma_a^{-\alpha}\Ker_adqds\\
\le& \alpha^\alpha M_1^\alpha\sup_{s\ge 0}\int_{\Real^n\times\{s\}}\left(\phi^2+\abs{D\phi}^2\right)dq
+M_1\lam_3^2\int_{\Real^{n+1}}\left(\abs{\phi}+\abs{D\phi}\right)^2\sigma_a^{1-\alpha}\Ker_adqds\\
&+M_1\lam_5^2\alpha^2\int_E\left(\abs{\bar{w}_R}+\abs{D\bar{w}_R}\right)^2\sigma_a^{1-\alpha}\Ker_adqds.
\end{split}
\end{equation}
Here $\Ker_a(q,s)=(s+a)^{-n/2}\exp\left[-\abs{q}^2/4(s+a)\right]$, $\sigma_a(s)=\sigma(s+a)$ and $\sigma:(0,4/\alpha)\To (0,+\infty)$ satisfying that $M_1^{-1}s\le\sigma(s)\le s$. If $\alpha\ge 2M_1\lam_3$, then the second term on the right hand side can be absorbed by the left hand. In $E$, $\sigma_a^{-\alpha}\Ker_a\le\alpha^{\alpha+\frac{n}{2}}M_1^{\alpha}$. Hence, by (\ref{DecayEqn9}), we get
\begin{equation}
\label{DecayEqn12}
\int_{\Real^{n+1}}\left(\alpha^2\phi^2+\alpha\sigma_a\abs{D\phi}^2\right)\sigma_a^{-\alpha}\Ker_adqds
\le\alpha^{\alpha+\frac{n}{2}}M_2^\alpha,
\end{equation}
where $M_2>M_1$ depends only on $R_3$, $\lam_4$, $\lam_5$ and $M_1$. Let $\rho=1/(M_2e)$ and $a=\rho^2/(2\alpha)$. Then, in $D_{2\rho}\times [0,\rho^2/(2\alpha)]$, 
\begin{equation}
\label{DecayEqn13}
\sigma_a^{1-\alpha}\Ker_a\ge \alpha^{\alpha+\frac{n}{2}-1}M_2^{2\alpha+n-2}.
\end{equation}
Therefore, we deduce from (\ref{DecayEqn12}) that
\begin{equation}
\label{DecayEqn14}
\int_{D_{2\rho}\times [0,\rho^2/(2\alpha)]}\left(\phi^2+\abs{D\phi}^2\right)dqds\le M_2^{2-\alpha-n}.
\end{equation}
We now choose $\alpha=M_3R^2$, where $M_3>1$ depends only on $R_3$, $\delta_1$, $\lam_3$ and $M_2$, so that
\begin{equation}
\label{DecayEqn15}
\int_{D_{2\rho}\times [0,\rho^2/(2M_3R^2)]}\left(\phi^2+\abs{D\phi}^2\right)dqds\le M_2^{2-n}\text{e}^{-2R^2}.
\end{equation}
By Lemma 4.1 in \cite{NgT}, this implies that, in $D_\rho\times [0,\rho^2/(4M_3R^2)]$,
\begin{equation}
\abs{\phi}+\abs{D\phi}\le\lam_6R^c\text{e}^{-R^2},
\end{equation}
where $\lam_6>0$ depends on $n$, $\lam_2$ and $\lam_3$, and $c>0$ depends on $n$. Undoing the change of variables, we get
\begin{equation}
\label{DecayEqn16}
\abs{w(z_0,t)}+\abs{\nabla_{\Sigma_t}w(z_0,t)}\le\lam_6\abs{z_0}^c\text{e}^{-\delta_1^2\abs{z_0}^2},
\end{equation}
if $0\le t\le \rho^2/(4M_3)$.

Therefore, It follows from the definition of $w$ that, there exist $M_4>0$ and $r_1>0$, depending only on $R_3$, $\delta_1$, $M_2$, $M_3$, $\lam_6$ and $c$, such that for $x\in\Sigma\setminus B_{r_1}$, 
\begin{equation}
\label{DecayEqn17}
\abs{v(x)}+\abs{\nabla_\Sigma v(x)}\le\exp\left(-M_4\abs{x}^2\right).
\end{equation}
It is clear that Lemma \ref{DecayLem} follows immediately from the inequality (\ref{DecayEqn17}).
\end{proof}

Now, we are ready to conclude the proof of Theorem \ref{UniqueThm}.
 
\begin{proof}
(of Theorem \ref{UniqueThm}) 
We basically follow the argument in \cite{ESS}. Choose $\eps_1$ in Lemma \ref{HeatLem} to be $(1+C_5)/4$. Then, choose $R$ large such that each lemma and proposition can be applied, and $T=M/16$. 

Given $a\in (0,1)$ and $r\gg 1$, we consider $\psi_{a,r}\in C_c^\infty(\Real^{n+1})$ satisfying that  $\psi_{a,r}\equiv 1$ when $(1+2a)R\le\abs{x}\le r$, $\psi_{a,r}\equiv 0$ when $\abs{x}\le (1+a)R$ or $\abs{x}\ge 2r$, $0\le\psi_{a,r}\le 1$ and $\abs{D\psi_{a,r}}$, $\abs{D^2\psi_{a,r}}$ are bounded from above by a function of $a$ and $R$. Choose the test function $\phi$ in Proposition \ref{CarlemanProp} to be $\phi_{a,r}=\psi_{a,r}w$. Thus, we get that
\begin{equation}
 \label{TestHeatEqn1}
\begin{split}
&\frac{d\phi_{a,r}}{dt}+\Delta_{\Sigma_t}\phi_{a,r}\\
=&\psi_{a,r}\left(\frac{dw}{dt}
+\Delta_{\Sigma_t}w\right)+2\left\la\nabla_{\Sigma_t}\psi_{a,r},\nabla_{\Sigma_t}w\right\rangle
+\left(\left\la D\psi_{a,r},\partial_t x\right\ra
+\Delta_{\Sigma_t}\psi_{a,r}\right)w.
\end{split}
\end{equation}
Hence, by Lemma \ref{HeatLem}, 
\begin{equation}
 \label{TestHeatEqn2}
\begin{split}
& \abs{\frac{d\phi_{a,r}}{dt}+\Delta_{\Sigma_t}\phi_{a,r}}^2\\
\le & \frac{1}{2}\left(\abs{\phi_{a,r}}^2+C_5\abs{\nabla_{\Sigma_t}\phi_{a,r}}^2\right)
+\lam\left(\abs{w}^2+\abs{\nabla_{\Sigma_t}w}^2\right)\chi_{a,R,r},
\end{split}
\end{equation}
where $\lam>0$ depends only on $n$, $C_1$, $a$ and $R$, and $\chi_{a,R,r}$ is the characteristic function supported in $\left(B_{2r}\setminus B_r\right)\cup \left(B_{(1+2a)R}\setminus B_{(1+a)R}\right)$.

Hence, by Proposition \ref{CarlemanProp} and the inequality (\ref{TestHeatEqn2}), for $\alpha>\alpha_0$,
\begin{equation}
 \label{CarlemanEqn1}
\begin{split}
& \int_0^T\int_{\Sigma_t}\phi_{a,r}^2\Ker_{\alpha,T,R}d\mu_t dt\\
\lesssim & \int_0^T\int_{\Sigma_t\cap B_{(1+2a)R}\setminus B_{(1+a)R}}\left(\abs{w}^2+\abs{\nabla_{\Sigma_t}w}^2\right)\Ker_{\alpha,T,R}d\mu_t dt\\
&+\int_0^T\int_{\Sigma_t\cap B_{2r}\setminus B_r}\left(\abs{w}^2+\abs{\nabla_{\Sigma_t}w}^2\right)\Ker_{\alpha,T,R}d\mu_t dt\\
&+\int_{\Sigma_T}\abs{\nabla_{\Sigma_T}\phi_{a,r}}^2\Ker_{\alpha,T,R}d\mu_T,
\end{split}
\end{equation}
where $\lesssim$ stands for being less than some positive multiple of the followed quantities, which depends only on $n$, $\lam$, $a$, $R$ and $T$. Furthermore, it follows from the fact that $M/T>8$, Lemma \ref{DecayLem} and the inequality (\ref{CarlemanEqn1}) that
\begin{equation}
 \label{CarlemanEqn2}
\begin{split}
 & \exp\left\{\left[(1+17a)^{1+\delta}-1\right]\alpha TR^{1+\delta}\right\}\int_0^{T/2}\int_{\Sigma_t\cap B_r\setminus B_{(1+17a)R}}w^2d\mu_t dt\\
\lesssim & r^2\exp\left(2^{2+\delta}\alpha Tr^{1+\delta}-\frac{Mr^2}{2T}\right)+\exp\left\{2\left[(1+2a)^{1+\delta}-1\right]\alpha TR^{1+\delta}\right\}+1.
\end{split}
\end{equation}

Note that $(1+17a)^{1+\delta}-1\ge 17a$ and $(1+2a)^{1+\delta}-1\le 8a$. Thus, letting $r\To\infty$, 
\begin{equation}
 \label{CarlemanEqn3}
\int_0^{T/2}\int_{\Sigma_t\setminus B_{(1+17a)R}}w^2d\mu_t dt\lesssim \exp\left(-\alpha aTR^{1+\delta}\right).
\end{equation}
Then, let $\alpha\To\infty$ and thus,
\begin{equation}
 \label{ConcEqn}
\int_0^{T/2}\int_{\Sigma_t\setminus B_{(1+17a)R}}w^2d\mu_t dt\le 0.
\end{equation}
By the arbitrariness of $a$, we conclude that $w\equiv 0$ on $Q_{R,T/2}$ and thus $v\equiv 0$ on $\Sigma\setminus B_{2R/\sqrt{T}}$.

Therefore, Theorem \ref{UniqueThm} follows immediately  by applying the strong continuation theorem (cf. \cite{GL1,GL2}) to $L_\Sigma$ inside the compact set, and an open and closed argument.
\end{proof}

\bibliographystyle{Plain}

\end{document}